\newtheorem{thm}{Theorem}
\newtheorem{lem}[thm]{Lemma}
\newtheorem{prop}[thm]{Proposition}
\newtheorem{cor}[thm]{Corollary}
\newtheorem{question}{Question}
\theoremstyle{remark}
\newtheorem{rem}{Remark}
\theoremstyle{definition}
\newtheorem{definition}[thm]{Definition}
\newcommand{\co}{\operatorname{co}}
\renewcommand{\seq}[1]{{#1}^{<\omega}}
\newcommand{\ns}[2]{{\mathrm{NS}_{#1}^{#2}}}
\newcommand{\nso}[1]{\ns{#1}{\omega}}
\newcommand{\limo}[1]{\mathrm{Lim}_{#1}^\omega}
\newcommand{\intprop}{\varphi_{\cap}}
\newcommand{\pinf}[1][]{\power_{#1}^\infty}
\newcommand{\intersects}[1]{\Hcal_{#1}}
\newcommand{\intfin}[1][]{\intersects{#1}^*}
\newenvironment{myenumerate}{\begin{enumerate}[label=\textup{(\alph*)}]}{\end{enumerate}}
\title{A Measure Theoretic Proof of $\p=\t$}
\author[James Hirschorn]{James Hirschorn}
\email{James.Hirschorn@quantitative-technologies.com}
\address{Quantitative Technologies\\ Phnom Penh, Kingdom of Cambodia}
\date{May 27, 2022}
\begin{document}
\maketitle

\begin{abstract}
  Rothberger's question of whether the two cardinals $\p$ and $\t$ are
  equal, posed back in 1948, was only answered fairly recently in the affirmative.
  Here we answer the more difficult progenitor question (posed in the same place)
  on when filter-bases can be refined to towers. Of equal import,
  our proof that $\p=\t$ addresses issues raised by Gowers concerning
  the ``proof path'' of the original solution.
  This is applied to obtain a gap spectrum result for the ultrapowers $\irr\div\U$.
\end{abstract}

\section{Introduction}

Rothberger~\cite[Lemma 7]{rothberger1948some} proved that if $\p=\aleph_1$ then $\p=\t$.
This begged the
question of whether this generalizes to $\p=\t$ without any constraint on the cardinality of $\p$?
This question of whether $\p$ and $\t$ are equal became the most important open problem in the
subfield of combinatorial set theory of the reals
(e.g.~\cite{malliaris2013general}, \cite{quanta-measure-infinity}).
It was recently solved, after more than sixty years, by
 Malliaris--Shelah~\cite{MalliarisShelah2016} with an affirmative answer.

The preceding Lemma~\cite[Lemma 6]{rothberger1948some} establishes that if $\p=\aleph_1$
then $\b>\p$\footnote{The stated hypothesis is that there are no $(\omega_1,\omega^*)$ gaps in
  $\pnfin$, and $\b$ is the smallest ordinal $\theta$ for which there is a $(\theta,\omega^*)$-gap
  as explained in~\cite[p.~34]{rothberger1948some} (see also \S\ref{sec:gaps}).} implies that
every filter-base of cardinality $\p$ can be refined to a tower. This begged the question of
whether this generalizes from $\aleph_1$ to arbitrary values of $\p$:

\begin{question}[Rothberger, 1948]
  \label{ques:rothberger}
  If\/ $\b>\p$ can every filter-base of cardinality\/ $\p$ be refined to a tower?
\end{question}

\cite[Lemma 7]{rothberger1948some} is an immediate consequence of \cite[Lemma
6]{rothberger1948some},
and the fact~\cite[Theorem~3]{rothberger1948some}
that $\t\le\b$. Thus the first attempt anyone would make to prove $\p=\t$, is to try and generalize the
proof of Lemma~6. In Rothberger's proof, as $\p=\aleph_1$ is assumed, only ordinals of countable
cofinality need to be handled in the induction step, whereas the challenge is to carry the induction
step to uncountable limits. This was undoubtedly tried by Rothberger himself, and is explicitly discussed in
the literature (e.g.~\cite{keremedis_weak_1999}). We formulate an auxiliary cardinal characteristic called
$\d^*$. Generalizing the proof of Lemma 6 to arbitrary $\p$ essentially amounts to proving
$\d^*\ge\b$,
the main result of the present paper.

In~\cite{gowersblog}, Gowers wonders why a purely combinatorial problem should have a solution with
non-trivial arguments involving proof theory and model theory. Indeed we establish here that the
proof path (c.f.~\cite{fridman-234}) of the original solution is suboptimal, in the sense that
the positive answer to Question~\ref{ques:rothberger} has a proof which only requires basic graduate
knowledge of set theory. Kunen's famous introductory text~\cite{kunen83} covers all the prerequisite
material.

The title might be considered misleading. For one thing, while the proof is completely
guided by considerations of measure, the only actual measure theory needed to understand the proof
of the main result are the fundamental properties of measure and the product measure construction,
which is again at the introductory graduate level (e.g.~\cite{Halmos}). For a fuller understanding,
ultraproduct measures are needed as well.

Notwithstanding, finding the correct measure was the main challenge in
solving Question~\ref{ques:rothberger}.
We were led to consider ``imaginary'' measures in the sense
that they only exist in forcing extensions of the mathematical universe, but what is
more they only exist in the most ``violent'' forcing extensions which collapse $\aleph_1$ to a
countable ordinal: Uncountable limit steps become countable in this forcing extension, putting us
back into Rothberger's original scenario. The need for this is explained in Section~\ref{sec:limit-as-products}.

As for Gowers' suggestion that perhaps a simple combinatorial argument was overlooked because of
false expectations that $\p<\t$ was consistent, we do not believe this to be the case. Our proof
establishes an apparently deep connection between cardinal characteristics of the reals and the nonstationary
ideal of ordinals of countable cofinality. To our knowledge no such connection has
ever been observed before.

Tsaban~\cite{tsaban_selection_2003} formulated the linear refinement number, now called $\lr$,
that captures the phenomenon of Rothberger's question.
Question~\ref{ques:rothberger} can be stated equivalently, and succinctly, as:

\begin{question}
  \label{ques:equiv}
  Does $\p<\b$ imply\/ $\p<\lr$?
\end{question}

Results on the cardinal characteristic $\lr$ shed further light on this
phenomenon in addition to our positive answer established here. Indeed, one of the results on $\lr$
in~\cite{machura_linear_2016} is seen to compliment our positive answer to Question~\ref{ques:equiv}
in Corollary~\ref{cor:ans-more}. While~\cite[Lemma~2.5]{machura_linear_2016} informs us that any
filter-base of cardinality smaller than $\lr$ can in fact be refined by a maximal tower of cardinality $\p$.
We believe the techniques developed here may be instrumental in answering open questions
about~$\lr$ (e.g.~Question~\ref{ques:tsaban}).

Hausdorff's discovery of $(\omega_1,\omega_1^*)$-gaps
and subsequent work on gaps was carried further by Rothberger,
leading up to his Question~\ref{ques:rothberger}, as discussed in~\S\ref{sec:gaps}.
It is then fitting that we find an application of our solution in the realm of gaps.
A result on the spectrum of \emph{universal gap} types in the ultrapowers $\irr\div\U$ is obtained in~\S\ref{sec:gaps}, closely related to the Malliaris--Shelah result on gaps in ultrapowers
$\irr\div\U$ for \emph{generic} ultrafilters~$\U$.

\subsection{Background}

A few years ago the author was deeply shocked when he stumbled across
Gowers' blog post~\cite{gowersblog}. The author's
first two works~\cite{hirschorn2000towers} and~\cite{hirschorn2000towersm}, done during and included in his
doctoral thesis~\cite{hirschorn2000cohen}, were on the topic of these two
cardinals. Up to the time the author was still following developments in set theory,
it seemed the unanimous consensus was that $\p<\t$ was consistent and it was
just a matter of devising the right forcing construction, with no one
taking the possibility $\p=\t$ serious enough to investigate. Indeed, prior to this result,
equalities between cardinal characteristics were without exception
relatively straightforward to prove and inevitability ``came out in the
wash'' when investigating their combinatorial properties.\footnote{Bell's
  Theorem~\cite{bell_combinatorial_1981}, used in the proof of the main result here, might be
  considered an exception. Though strictly speaking it is an equality between the cardinal
  characteristic $\p$ and a forcing axiom.}  

In~\cite{MalliarisShelah2016} the equality was established using model theoretic
methods. As pointed out by Gowers, this is hardly a satisfying
conclusion to the problem. This is a question of set theoretic
combinatorics with no obvious relation to model theory. Such a proof
obscures the true reason for the equality, in fact being inaccessible
to expert combinatorial set theorists who are unfamiliar with model
theory.

By way of analogy, a more extreme case would be a computerized proof
of a mathematical theorem, impenetrable by any human
mathematician. Such a proof could be of great value by
establishing the truth or falsehood of a mathematical
statement; however, one could hardly consider it an ideal resolution (this brings to mind the
famous 4-Color Problem, e.g.~\cite{tilley_will_2019}).

The author is unknowledgable in model theory, and made no attempt to
read the~\cite{MalliarisShelah2016} proof. Other set theorists
presumably had the same issue and numerous articles were published 
that apparently reproduce the~\cite{MalliarisShelah2016} proof while
attempting to minimize the amount of model theory required.

Besides solving Rothberger's more difficult problem,
we contribute a proof of $\p=\t$ which does not rely on any
model theory or proof theory, readily accessible to any set theorist.
As for the objection that these questions on cardinal characteristics are not about measure
theory either, we point out that connections with measure theory are made in the same
paper~\cite[Theorem~5b]{rothberger1948some} where the $\p=\t$ question is
introduced.

Our strategy to solve Question~\ref{ques:rothberger}
using measure theory is described in Section~\ref{sec:meas-theor-strat}.
In Section~\ref{sec:partition-measures}, we describe how a tower together with a partition of $\N$,
can be associated with a sequence of product measures, called partition measures, on~$\pN$.
In an earlier failed attempt, ultraproduct partition measures were used instead, and new nontrivial measure
theoretic results were used to obtain a limiting measure of this family (see
\S\ref{sec:ultraproducts})---giving rise to the title of the present work. However, without
collapsing cardinals such an attempt was doomed.  

Further progress on our measure theoretic strategy was only made after the author recalled his earlier
research~\cite{hirschorn_james_partition_2005}, done as a post-doctoral researcher in Japan, where
the fundamental mathematical concept of measure was generalized to a \emph{measure over some model
  $M$}. This approach is explored in Section~\ref{sec:limit-via-ultrafilter}, and the understanding gleaned,
of why it cannot be directly applied here,
led us straight to a suitable measure, used for the solution.

We mention that the work~\cite{hirschorn_james_partition_2005}
was motivated by the desire to put one of the main results from the author's earlier thesis
work  on random reals and Hausdorff gaps (\cite{hirschorn2000cohen}, \cite{hirschorn_summable_2003})
in a proper measure theoretic context, with the eventual
goal of discovering new combinatorial facts about the random forcing extension.
The author has produced a number of additional works on random reals and Hausdorff gaps
(\cite{hirschorn_random_2004}, \cite{hirschorn_strength_2008}, \cite{10.2307/40302758}).

During the course of the analysis in Section~\ref{sec:limit-via-ultrafilter}
we find that a special case of Rothberger's
Question~\ref{ques:rothberger} is already handled by the author's thesis work on random reals and towers in
$\pnfin$, i.e.~\emph{towers of measurable functions}, published in~\cite{hirschorn2000towersm}.

The main result of~\cite{hirschorn2000towersm} (among a number of other related results) is that the
operation of adding one random real does not cause the value of $\p$ to decrease, symbolically
$\|\dot \p\ge\check \p\|=1$. This left open the question of whether the cardinal $\p$ is in fact
invariant under adding one random real? Now that $\p=\t$ has been resolved, it is our opinion that
this invariance question is perhaps the most interesting unsolved problem concerning cardinal
characteristics, at least among those concerning the cardinal $\p$. See
Question~\ref{ques:optimality} for another candidate. 

\subsection{Preliminaries}

$\p$ and $\t$ are cardinal characteristics of the Boolean algebra $(\pnfin,\subseteqfnt)$.
This is the quotient of the field $\pN$ of all subsets of $\N$ over the equivalence relation of equality modulo finite, where $a$ is equivalent to $b$ modulo finite if their set difference $a\diff b$ is finite.
Thus $\pnfin$ is the Boolean algebra determined by the partial ordering of inclusion modulo finite,
where $[a]\subseteqfnt[b]$ iff $a\subseteqfnt b$ which means $a\setminus b$ is finite, for all $a,b\subseteq\N$.

$\p$~is the smallest cardinality of a (proper) filter-base on $\pnfin$
with no nonzero lower bound; in other words,
working with representatives $a\subseteq\N$ in place of $[a]\in\pnfin$,
a family $\F$ of infinite subsets of $\N$, all of whose finite intersections are infinite,
having no infinite \emph{pseudo-intersection}, i.e.~no infinite $x$
with $x\subseteqfnt a$ for all $a\in\F$. We call a $\supseteqfnt$-well-ordered subfamily of
$(\pnfin)^+$\footnote{$(B)^+$ denotes the nonzero elements of $B$.} a \emph{tower}. The cardinal $\t$ is the
smallest order-type of a maximal tower; in other words, working with representatives, a family 
$a_\xi\subseteq\N$ ($\xi<\theta$) for some ordinal $\theta$ with $a_\xi\supseteqfnt a_\eta$
 for all $\xi<\eta$ having no infinite pseudo-intersection.  
Since a well-ordered family in $(\pnfin)^+$ forms filter-base, $\p\le\t$.

That $\p$ is in fact uncountable is due to Hausdorff~\cite{hausdorff1936summen}.
(That work also includes a major discovery discussed in \S\ref{sec:gaps}.)

\begin{thm}[Hausdroff, 1936]
  $\p\ge\aleph_1$.
\end{thm}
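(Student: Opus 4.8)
The plan is to prove the contrapositive of countability: every countably infinite filter-base on $\pnfin$ has a nonzero lower bound, so $\p$ cannot equal $\aleph_0$. Working with representatives as in the Preliminaries, let $\F=\{a_n:n<\omega\}$ be a family of infinite subsets of $\N$ all of whose finite intersections are infinite; the goal is to produce an infinite $x\subseteq\N$ with $x\subseteqfnt a_n$ for every $n$, i.e.\ an infinite pseudo-intersection, which is precisely a nonzero lower bound of $[\F]$ in $\pnfin$.

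First I would replace $\F$ by a decreasing sequence: set $b_n=a_0\cap a_1\cap\cdots\cap a_n$. The finite-intersection hypothesis guarantees that each $b_n$ is infinite, and $b_0\supseteq b_1\supseteq\cdots$; moreover any pseudo-intersection of $\{b_n:n<\omega\}$ is automatically one of $\F$, since $b_n\subseteq a_n$. Then I would diagonalize: choose recursively $x_n\in b_n$ with $x_n>x_{n-1}$, which is possible because each $b_n$, being infinite, is unbounded in $\N$. Put $x=\{x_n:n<\omega\}$.

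It then remains only to check that $x$ does the job. It is infinite because $n\mapsto x_n$ is strictly increasing. For each fixed $n$, every $x_m$ with $m\geq n$ lies in $b_m\subseteq b_n$, so $x\setminus b_n\subseteq\{x_0,\dots,x_{n-1}\}$ is finite; hence $x\subseteqfnt b_n\subseteq a_n$ for all $n$, and $x$ is the desired pseudo-intersection. This shows $\p>\aleph_0$, i.e.\ $\p\geq\aleph_1$. There is no serious obstacle in this argument; the only place the hypothesis is used is in keeping the sets $b_n$ infinite so the recursion can proceed, and the one conceptual point worth isolating is that passing to the modulo-finite quotient is exactly what turns the naive diagonal set $x$ into a genuine lower bound of the filter-base.
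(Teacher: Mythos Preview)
Your argument is correct: reducing to a decreasing sequence $b_n=\bigcap_{i\le n}a_i$ and diagonalizing to pick $x_n\in b_n$ with $x_n>x_{n-1}$ is exactly the classical proof, and every step goes through as you describe.

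As for comparison, the paper does not actually supply a proof of this theorem; it merely states the result and attributes it to Hausdorff~(1936). So there is nothing to compare against beyond noting that your write-up is the standard diagonalization argument one would expect here.
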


In some partial order $(P,\le)$, $A\subseteq P$ is a \emph{refinement} of $B\subseteq P$ if every
$b\in\nobreak B$ has an $a\in A$ with $a\le b$.
The \emph{linear refinement number} $\lr$ is defined to be
the smallest cardinality of a filter-base with no $\supseteqfnt$-linearly ordered refinement.
It is directly relevant to the main question addressed here; for if $L\subseteq P$ is a linear
refinement of~$P$ and $W\subseteq L$ is a maximal well-ordered subset of $L$, then $W$ is also a
refinement. It follows immediately that  
Questions~\ref{ques:rothberger} and~\ref{ques:equiv} are equivalent.

$\p\le\lr$ is immediate from the definitions.
Moreover, it a result of Tsaban~\cite{tsaban_selection_2003} that $\lr\le\d$
(which is strengthened further in~\cite[Corollary~2.13]{machura_linear_2016}),
and in particular there always exist filter-bases which cannot be refined to towers.

\begin{thm}[Tsaban, 2003]
  $\lr\le\d$.
\end{thm}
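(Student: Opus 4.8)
The plan is to construct, from a dominating family of size $\d$, a filter-base of cardinality $\d$ that can be refined by no $\supseteqfnt$-chain; by the remark preceding the statement this is the same as not being refinable to a tower, so such a family witnesses $\lr\le\d$. Fix a dominating family $\{f_\alpha:\alpha<\d\}\subseteq\N^{\N}$ of strictly increasing functions, and recall the elementary fact that no single $g\in\N^{\N}$ satisfies $g\ge^{*}f_\alpha$ for every $\alpha$ (else every function, and in particular $g+1$, would be $\le^{*}g$). I would look for infinite sets $A_\alpha\subseteq\N$ such that: \emph{(i)}~$\{A_\alpha:\alpha<\d\}$ is a filter-base, with every finite intersection infinite; \emph{(ii)}~it has no pseudo-intersection; and \emph{(iii)}~any infinite $x\subseteqfnt A_\alpha$ is ``at least as thin as $f_\alpha$'', in the sense that its increasing enumeration $e_x$ satisfies $e_x\ge^{*}f_\alpha$.

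Granting \emph{(i)}--\emph{(iii)}, a tower refinement is ruled out as follows. Let $\langle t_\xi:\xi<\theta\rangle$ be a tower refining $\{A_\alpha\}$ and put $\xi(\alpha)=\min\{\xi:t_\xi\subseteqfnt A_\alpha\}$. If the ordinals $\xi(\alpha)$ were bounded by some $\xi^{*}<\theta$, then $t_{\xi^{*}}\subseteqfnt A_\alpha$ for all $\alpha$, a pseudo-intersection, contradicting \emph{(ii)}. Also $\theta>\omega$: a $\supseteqfnt$-decreasing $\omega$-sequence is a countable filter-base, so by $\aleph_{0}<\p$ it has a pseudo-intersection, which would be one for $\{A_\alpha\}$ since the tower refines it. Now let $R=\{\xi(\alpha):\alpha<\d\}$, a set of ordinals of size $\le\d$; the enumerations $e_{t_\xi}$ are strictly increasing functions, $\langle e_{t_\xi}:\xi\in R\rangle$ is $\le^{*}$-increasing because the tower is $\supseteqfnt$-decreasing, and $e_{t_{\xi(\alpha)}}\ge^{*}f_\alpha$ by \emph{(iii)}; hence this $\le^{*}$-increasing well-ordered family dominates $\{f_\alpha\}$ and is therefore itself dominating. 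A $\le^{*}$-increasing dominating well-ordered sequence is a \emph{scale}, and a scale exists only when $\b=\d$. So when $\b<\d$ the filter-base $\{A_\alpha\}$ admits no tower refinement, and $\lr\le\d$ in that case.

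The substantive task is to realize \emph{(i)}--\emph{(iii)} simultaneously. Thinness \emph{(iii)} works against \emph{(i)}: if all the $A_\alpha$ shared a common infinite thin subset, that subset would be a pseudo-intersection. A sharper point is that the thinness requirements cannot be imposed one coordinate at a time: on identifying $\N$ with $\N\times\N$, if membership in $A_\alpha$ were decided column by column, then the constraints inside a single column form a \emph{countable} $\supseteqfnt$-decreasing family, which always has a pseudo-intersection, again by $\aleph_{0}<\p$. So the demands made by the various $f_\alpha$ must be genuinely entangled across the whole index set $\d$; I would try to build the $A_\alpha$ from a dominating family of interval-partitions of $\N$ whose blocks interlock, taking $A_\alpha$ to be a carefully chosen selection within the blocks of the $\alpha$-th partition, and the delicate verification will be clause \emph{(ii)}.

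I expect the genuine obstacle to be the case $\b=\d$, where scales of length $\d$ exist and the dichotomy above produces no contradiction. Ruling out a tower refinement of length exactly $\d$ should require exploiting more of the internal structure of the $A_\alpha$ — so that a tower refinement is forced to ``grow too fast at cofinally many stages'' — or else a separate construction, of a non-linearly-refinable filter-base of size $\d$ obtained from a $\d$-scale by widening it in a transverse direction. Everything else is bookkeeping: the arithmetic of the chosen sets, the fact that no single function dominates a dominating family, and repeated use of $\p\ge\aleph_{1}$ to extract pseudo-intersections from countable filter-bases.
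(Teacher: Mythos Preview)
The paper does not prove this statement; it is cited as a result of Tsaban, with a pointer to a later strengthening. So there is no proof in the paper to compare against, and the proposal must stand on its own.

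As you yourself flag, it is incomplete in two essential respects. First, the sets $A_\alpha$ satisfying \emph{(i)}--\emph{(iii)} are never constructed; you only say you ``would try to build'' them from interval partitions. This is not a routine detail. Condition \emph{(iii)} is equivalent simply to $e_{A_\alpha}\gefnt f_\alpha$ (since $x\subseteqfnt A$ always gives $e_x\gefnt e_A$), so each $A_\alpha$ must be at least as sparse as $\ran(f_\alpha)$; arranging the finite intersection property \emph{(i)} among $\d$ many such sparse sets is exactly the substantive combinatorics, and you supply no mechanism. (Note also that \emph{(iii)} already implies \emph{(ii)}: a pseudo-intersection $x$ would have $e_x\gefnt f_\alpha$ for every $\alpha$, contradicting that no single function dominates a dominating family. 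So \emph{(ii)} is not an extra constraint.)

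Second, and more seriously, even granting \emph{(i)}--\emph{(iii)} your argument only rules out a tower refinement when $\b<\d$, by producing a scale and invoking its nonexistence. When $\b=\d$ scales do exist and you obtain no contradiction; you call this ``the genuine obstacle'' but offer nothing beyond the hope that further internal structure or ``a separate construction'' will help. That is not a proof, and the missing case includes~$\ch$. As it stands, the proposal would establish at most the implication $\b<\d\Rightarrow\lr\le\d$, and even that only conditionally on a construction you have not given.
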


$\b$ and $\d$ are cardinal characteristics of the lattice
$(\irrfin,\lefnt)$. The is the quotient of the lattice $\irr$ of all
of function from $\N$ into $\N$ over the equivalence relation of
equality modulo finite, i.e.~$[f]\sim[g]$ if $f(n)=g(n)$ for all but
finitely many $n$. This determines a lattice ordering $\lefnt$ where
$[f]\le[g]$ iff $f\lefnt g$ meaning that $f(n)\le g(n)$ for all but finitely
many $n$.

$\b$ is the smallest cardinality of a subset of
$\irrfin$ with no common $\lefnt$-upper bound; in other words, working
with representatives $f:\N\to\N$ in place of $[f]$, a subset
$\Hcal\subseteq\irr$ with no $h:\N\to\N$ \emph{eventually bounding}
every member of $\Hcal$, i.e.~no $h:\N\to\N$ such that $f\lefnt h$ for
all $f\in\Hcal$. The cardinal $\d$ is the smallest cardinality of a
cofinal subfamily $\D$ of $\irrfin$, i.e.~every $f\in\irr$ has a
$[d]\in\D$ with $f\lefnt d$. Since a cofinal family cannot be bounded, $\b\le\d$.

We have introduced a ranked list of cardinals here by the following
result\footnote{It is explicitly proved in~\cite{rothberger1948some}
  that $\b=\aleph_1$ implies $\t\le\aleph_1$. However the same proof
  generalizes to arbitrary cardinals.} (\cite[Theorem~3]{rothberger1948some}) mentioned in the introduction.

\begin{thm}[Rothberger, 1948]
  $\t\le\b$.
\end{thm}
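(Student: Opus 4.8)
The plan is to exhibit a maximal tower whose order type is at most $\b$. Fix an unbounded family $\langle f_\alpha:\alpha<\b\rangle$ in $\irr$ (a family of size exactly $\b$ exists, by definition of $\b$). For an infinite $a\subseteq\N$ let $e_a$ denote its increasing enumeration, $e_a(0)=\min a$. The basic move is \emph{thinning}: given an infinite $B\subseteq\N$ and any $f\in\irr$, one can pick an infinite $A\subseteq B$ with $B\setminus A$ infinite and $e_A(j)\ge f(j)$ for all large $j$, by choosing the successive elements of $A$ large enough and omitting enough of $B$.

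I would then construct, by recursion on $\alpha<\b$, infinite sets $A_\alpha\subseteq\N$ such that $A_\beta\subsetfnt A_\alpha$ with $A_\alpha\setminus A_\beta$ infinite whenever $\alpha<\beta$, and $f_\alpha(j)\le e_{A_\alpha}(j)$ for all large $j$. At stage $0$, and at a successor $\alpha+1$, apply thinning to $\N$, respectively to $A_\alpha$, against $f_0$, respectively $f_{\alpha+1}$. At a limit $\lambda$ the sets $\langle A_\gamma:\gamma<\lambda\rangle$ already form a tower of order type $\lambda$; if it has no infinite pseudo-intersection, then it is a maximal tower of order type $\lambda\le\b$ and we are finished. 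Otherwise we take an infinite pseudo-intersection, thin it to secure $f_\lambda(j)\le e_{A_\lambda}(j)$ eventually, and observe that $A_\lambda$ is automatically co-infinite in every $A_\gamma$ ($\gamma<\lambda$): indeed $A_\gamma\setminus A_{\gamma+1}$ is infinite while any pseudo-intersection meets it in a finite set, so $(A_\gamma\setminus A_{\gamma+1})\setminus A_\lambda$ is infinite.

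If the recursion runs all the way to $\b$ we have a tower $\langle A_\alpha:\alpha<\b\rangle$ of order type $\b$, and it remains to check maximality. Suppose toward a contradiction that $x$ is an infinite pseudo-intersection, and set $k_\alpha=|x\setminus A_\alpha|<\omega$. Among the first $j+1+k_\alpha$ members of $x$, at most $k_\alpha$ lie outside $A_\alpha$, hence at least $j+1$ of them lie in $A_\alpha$ and are $\le e_x(j+k_\alpha)$; therefore $e_{A_\alpha}(j)\le e_x(j+k_\alpha)$ for every $j$. Combined with $f_\alpha(j)\le e_{A_\alpha}(j)$ eventually, this gives $f_\alpha(j)\le e_x(j+k_\alpha)\le e_x(2j)$ for all large $j$, so the single function $j\mapsto e_x(2j)$ is a $\lefnt$-bound for every $f_\alpha$, contradicting the unboundedness of $\langle f_\alpha:\alpha<\b\rangle$. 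Hence the tower is maximal and $\t\le\b$.

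The one genuinely delicate point is the limit stage: a $\subsetfnt$-decreasing tower of length $\lambda\ge\p$ need not possess a pseudo-intersection at all, but this causes no difficulty here, because we use only the dichotomy ``either it has one, and we continue the recursion, or it has none, and we have already produced a maximal tower of length $<\b$.'' It is worth noting that the additive shift $k_\alpha$ in the counting estimate $e_{A_\alpha}(j)\le e_x(j+k_\alpha)$ is precisely what forces the passage to $j\mapsto e_x(2j)$, and that once this is arranged no regularity hypothesis on $\b$ is needed.
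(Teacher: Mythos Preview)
Your argument is correct and is essentially the standard proof of $\t\le\b$: build a tower whose enumerating functions eventually dominate the members of a fixed unbounded family, and observe that any pseudo-intersection would furnish a single dominating function. The counting estimate $e_{A_\alpha}(j)\le e_x(j+k_\alpha)$ and the passage to $j\mapsto e_x(2j)$ are handled cleanly, and your treatment of the limit-stage dichotomy is exactly right.

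As for comparison with the paper: there is nothing to compare against. The paper does not prove this theorem; it merely records it as a classical result of Rothberger, with a footnote remarking that \cite{rothberger1948some} explicitly proves the case $\b=\aleph_1$ and that the same proof generalizes. Your write-up is a perfectly good realization of that generalization.

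One stylistic remark: the clause ``with $A_\alpha\setminus A_\beta$ infinite'' is not actually needed for the final maximality argument (only $x\subseteqfnt A_\alpha$ is used there), though it does no harm and makes the construction a genuine strictly decreasing tower.
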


The cardinal characteristics just introduced are called \emph{small
  cardinals} because they are bounded by the continuum
$\cntum=2^{\aleph_0}=|\pN|=|\reals|$ (e.g.~$\d\le\cntum$, trivially).

\subsection{Measure Theoretic Strategy}
\label{sec:meas-theor-strat}

In the limit step of Rothberger's proof of \cite[Lemma~7]{rothberger1948some}, one
must find ``thick'' pseudo-intersections (see~Section~\ref{sec:thick} for more details):
For a tower $|T|<\p$ and another family $H$ of nonorthogonal subsets of $\N$, one must find a pseudo-intersection of
the tower which is ``thick'' in the sense that it intersects every member of $H$. 

The strategy is based on the idea that this thickness can be measured; that is,
by an actual measure.
A distribution $(\pN,\M,\mu)$ is created on subsets of $\power(\N)$,
together with two transformations of subsets of $\N$ into measurable sets
$\varphi_T,\varphi_H:\pN\to\M$ with the property that
\begin{equation}
  \label{eq:17}
  \mu\bigl(\varphi_T(x)\cap\varphi_H(b)\bigr)>0\impls x\cap b\text{ is infinite.}
\end{equation}
Roughly, the idea is to first find a measure $\mu$ satisfying the property that $\mu\bigl(\varphi_H(b)\bigr)=1$ for all
$b\in H$, and then use the cardinality assumption on $T$ to produce a pseudo-intersection
$x\subseteqfnt T$ which is ``thick'', i.e.~$\mu\bigl(\varphi_T(x)\bigr)>0$. The problem of finding an extension of
$T$ of positive measure is a reduction of the original problem of trying to find such an extension
intersecting every member of $H$ where the cardinality of $H$ exceeds $\p$. 

Searching for such a measure one is quickly led to consider a partition of $\N$ into 
finite intervals $I_n$ such that $t\cap b\cap I_n\ne\emptyset$ for all but finitely many $n$, 
for~all~$t\in T$ and $b\in H$. 
The most obvious choice of measure is then a product $\mu$ of measures $\mu_n$ on $\power(I_n)$ for
each $n$;  
and the natural choice for these is to let $\mu_n$ be the Haar distribution on $\power(I_n)$,
i.e.~$\mu_n(\{s\subseteq I_n:s\ni k\})=\frac12$ for each $k\in I_n$.
The straightforward choices for $\varphi_T$ and $\varphi_H$ would be to let
$\varphi_T(x)=\{y:y\subseteqfnt x\}$  and $\varphi_H(b)=\{y:y\cap b\text{ is infinite}\}$.
This choice trivially satisfies~\eqref{eq:17}, and $\mu\bigl(\varphi_H(b)\bigr)=1$ for any infinite
$b$. However, for any nontrivial tower $T$ we will have $\varphi_T(t)$ $\mu$-null for essentially all $t\in T$. 

Instead, choosing the larger
$\varphi_T(x)=\{y:y\cap I_n\subseteq x$ for infinitely many~$n\}$, and then $\varphi_H(b)=\{y:y\cap b\text{
  intersects }I_n$ for all but finitely many $n\}$ so that~\eqref{eq:17} holds, we can hope to find
a suitable measure more easily. Then whenever
$b$ satisfies the summability condition
\begin{equation}
  \sum_{n=0}^\infty 2^{-|b\cap I_n|}<\infty,
\end{equation}
$\varphi_H(b)$ has Haar-measure one. Even though the Haar product measure is not suitable
to obtain non-null $\varphi_T(t)$, it leads down the right track.  

A more symmetrical alternative is considered in \S\ref{sec:ultraproduct-measure},
where $\varphi_T(t)=\{y:y\cap I_n\subseteq x$
for $\U$-many $n\}$ and $\varphi_H(b)=\{y:y$ intersects $b\cap I_n$ for $\U$-many $n\}$, and now $\mu$
is an ultraproduct measure $\lim_{n\to\U}\mu_n$ on the intervals.

\section{Thick Pseudo-Intersections}
\label{sec:thick}

Our objects of study are so called \emph{pseudo-intersection}s of
filter-bases of subsets of $\N$, which are ``thick'' in a specific
sense. This thickness is relative to a family $H$ of in infinite subsets of
$\N$, consisting of those subsets of $\N$ that have an infinite
intersection will all members of the family. 

Let $B$ be a Boolean algebra. For $G\subseteq B$, define $-G=\{-x:x\in G\}$.
Also define
\begin{equation}
\co(G)=\bigl(\downcl{(-G)}\bigr)^\complement.
\end{equation}
In other words, $\co(G)$ is the set of all members of $B$ which meet every
member of~$G$.
When $G$ generates a filter,
$\co(G)$ is the coideal of the dual ideal of the filter~$\upcl{G}$, and in
particular $G\subseteq\co(G)$. For an ultrafilter $U$, $\co(U)=U$. 

The following is well-known.

\begin{thm}
  \label{thm:d}
$\d$ is the smallest cardinality of a subset $[G]\subseteq\pN\div\Fin$
for which $\co([G])$ has a countable filter-base with no nonzero lower bound.
\end{thm}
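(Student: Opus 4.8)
The plan is to prove $\le\d$ and $\ge\d$ separately, throughout identifying $[G]$ with a family $G$ of infinite subsets of $\N$, so that $\co(G)$ consists of the infinite $a\subseteq\N$ meeting every $g\in G$ in an infinite set, and noting that ``$\co(G)$ has a countable filter-base with no nonzero lower bound'' amounts (after replacing the filter-base by a $\subseteqfnt$-decreasing cofinal subsequence, and that in turn by the sequence of its finite intersections) to the existence of a $\subseteq$-decreasing sequence $F_0\supseteq F_1\supseteq\cdots$ with every $[F_j]\in\co(G)$ and with no pseudo-intersection of $\{F_j\}_j$ lying in $\co(G)$; equivalently, every pseudo-intersection of $\{F_j\}_j$ is almost disjoint from some $g\in G$.

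For $\le\d$ I would fix a cofinal family $\{f_\alpha:\alpha<\d\}$ in $(\N^\N,\le^*)$, identify $\N$ with $\omega\times\omega$, and take $F_j=\{(i,m):i\ge j\}$ and $g_\alpha=\{(i,m):m\ge f_\alpha(i)\}$, putting $[G]=\{[g_\alpha]:\alpha<\d\}$. Since $(i,f_\alpha(i))\in F_j\cap g_\alpha$ for all $i\ge j$, the sequence $\{[F_j]\}$ is a (countably infinite) filter-base contained in $\co([G])$; and if $A$ were a pseudo-intersection of $\{F_j\}_j$ lying in $\co([G])$, then each column $A\cap(\{i\}\times\omega)$ would be finite, the function $h_A(i):=1+\max\{m:(i,m)\in A\}$ would be well-defined, some $f_\alpha$ would dominate it, and then $A\cap g_\alpha$ would be finite --- a contradiction. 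So the cardinal of the theorem is at most $|[G]|\le\d$.

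For $\ge\d$, let $([G],\{F_j\}_j)$ witness the property, with $\{F_j\}_j$ $\subseteq$-decreasing as above, and set $\kappa=|[G]|$. I would first bring $\{F_j\}_j$ to \emph{block form}: the successive differences $D_j:=F_j\setminus F_{j+1}$ all infinite, and $\bigcap_jF_j=\emptyset$, at the cost of shrinking $G$ to $G':=\{g\in G: g\text{ meets infinitely many }D_j\}$. Indeed, if only finitely many $D_j$ are infinite then a tail $F_{j^\ast}$ is itself a pseudo-intersection of $\{F_j\}_j$ lying in $\co(G)$, contrary to hypothesis; so infinitely many blocks are infinite and one passes to a subsequence making every block infinite; and if $F_\infty:=\bigcap_jF_j$ is infinite, one replaces each $F_j$ by $F_j\setminus F_\infty$, the nontrivial point being that a pseudo-intersection $B$ of $\{F_j\setminus F_\infty\}_j$ lying in $\co(G')$ would yield the pseudo-intersection $F_\infty\cup B$ of $\{F_j\}_j$ lying in $\co(G)$ --- it meets every $g\in G'$ infinitely because $B$ does, and it meets every other $g\in G$ infinitely because such a $g$ meets only finitely many blocks, forcing $F_j\cap g$ to equal $F_\infty\cap g$ for large $j$ and hence $F_\infty\cap g$ to be infinite --- which contradicts the original witness. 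Having reached block form, fix increasing enumerations $\theta_j\colon\omega\to D_j$ and attach to each $g\in G'$ the function $\psi_g\in\N^\N$ obtained by reading $\max\!\bigl(\theta_j^{-1}(\min(g\cap D_j)),\,j\bigr)$ along the set $T_g=\{j:g\cap D_j\ne\emptyset\}$, which is infinite because $F_j\cap g$ is infinite for every $j$. If $\{\psi_g:g\in G'\}$ were not cofinal in $(\N^\N,\le^*)$, pick an increasing $h\ge1$ with $h\not\le^*\psi_g$ for every $g$; then the ``staircase'' $A_h=\bigcup_j\theta_j\bigl[\{0,\dots,h(j)-1\}\bigr]$ has finite nonempty trace on every block, hence is a pseudo-intersection of $\{F_j\}_j$, and for each $g\in G'$ it meets $g$ in the block which is the $k$-th element of $T_g$ whenever $h(k)>\psi_g(k)$ --- which happens for infinitely many $k$, using that $h$ is increasing and that the $k$-th element of $T_g$ is $\ge k$ --- so $A_h\in\co(G')$, contradicting the (reduced) witness. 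Therefore $\{\psi_g:g\in G'\}$ is cofinal, so $\kappa\ge|G'|\ge\d$.

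The step I expect to be the real obstacle is the reduction to block form --- in particular disposing of an infinite $\bigcap_jF_j$ --- since the pseudo-intersections of an arbitrary countable filter-base need not organize into a copy of $\N^\N$ on which the sets ``almost disjoint from $g$'' behave like $\sigma$-compact sets (the feature that ultimately makes $\d$, as the least number of compact subsets covering $\N^\N$, the right value). The ``fatten back by $F_\infty$'' argument above, together with the observation ruling out having only finitely many infinite blocks, is what repairs the structure; after that, extracting the cofinal family $\{\psi_g\}$ and running the staircase argument is essentially bookkeeping.
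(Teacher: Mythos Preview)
Your proof is correct. The $\le\d$ direction is essentially identical to the paper's: both take a dominating family, code its members as ``areas above the graph'' in $\N\times\N$, and use the columns $\{(i,m):i\ge j\}$ as the countable filter-base.

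For $\ge\d$ the paper takes a noticeably shorter and more direct route. Rather than reducing to block form and extracting a dominating family from $G'$, the paper argues contrapositively from $|G|<\d$: given a countable filter-base $\{[a_k]\}$ in $\co([G])$, it assigns to each $g\in G$ the single function
\[
h_g(n)=\min\Bigl(g\cap\bigcap_{i\le n}a_i\setminus n\Bigr),
\]
uses $|G|<\d$ to pick $f$ with $f\not\le^* h_g$ for every $g$, and sets $a=\bigcup_n\bigl(\bigcap_{k\le n}a_k\cap[0,f(n))\bigr)$. One checks directly that $a\subseteqfnt a_m$ for all $m$ and $a\cap g$ is infinite for all $g\in G$ (since $h_g(n)\in a\cap g$ whenever $f(n)>h_g(n)$). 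No block decomposition, no shrinking of $G$, no case analysis on $\bigcap_jF_j$.

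Your approach has the virtue of making the dominating family inside $G$ explicit, but at the cost of the reduction to block form, which---as you yourself flag---is where the work accumulates (ruling out finitely many infinite blocks, disposing of an infinite $F_\infty$ by the ``fatten back'' trick, passing to $G'$ and arguing it still witnesses the property). The paper's trick of building $h_g$ directly from the filter-base sidesteps all of this: it never needs the $F_j$ to be in any normal form, because the function $h_g$ already encodes ``how far into $\bigcap_{i\le n}a_i$ one must go to hit $g$''.
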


\begin{rem}
  In other words,
$\d$ is the smallest cardinality of a subset $G\subseteq\pnfin$ for
which $\p\bigl(\co([G]),\subseteqfnt\bigr)=\aleph_0$.
\end{rem}

\begin{proof}
Suppose $G\subseteq\pN$ has cardinality $|G|<\d$, and that $\{[ a_k]:k<\omega\}$
is a filter-base in~$\co([G])$. For each $g\in G$, define $h_g:\N\to\N$ by
\begin{equation}
  h_g(n)=\min\left(g\cap\bigcap_{i=0}^n a_n\setminus n\right).
\end{equation}
Then $G$ is small enough that there must exist $f:\N\to\N$ such that
$f\nlefnt h_g$ for all~$g\in G$. Define
\begin{equation}
  a=\bigcup_{n=0}^\infty \bigcap_{k=0}^n a_k\cap\bigl[0,f(n)\bigr).
\end{equation}
Clearly $a\subseteqfnt a_n$ for all $n$, and $g\cap a$ is infinite for
all $g\in G$, because whenever $f(n)>h_g(n)$,
$g\cap\bigcap_{k=0}^na_k\cap[0,f(n)]\setminus n\ne\emptyset$.
Thus $[a]\in\co([G])$ is a nonzero lower bound of $\{[a_k]:k<\omega\}$.

Conversely, let $[F]\subseteq\irrfin$ be a cofinal subset of
cardinality $\d$.
For each $f\in F$, define $b_f\subseteq\N\times\N$ to be the area strictly above
the graph of $f$:
\begin{equation}
  \label{eq:22}
b_f=\{(i,n):n>f(i)\},
\end{equation}
and put $B=\{b_f:f\in F\}\subseteq\power(\N\times\N)$. For each $k$, define
$c_k\subseteq\N\times\N$ to
\begin{equation}
c_k =\{(i,n):i\ge k\}.
\end{equation}
Then $[C]=([c_k]:k<\omega)$ is clearly a decreasing sequence in $\co([B])$.
Supposing that $c\subseteqfnt c_k$ for all $k$, the vertical slices of $c$ are
all finite, i.e.~$\{i:(i, n)\in c\}$ is finite for all $n$, and we can define
$g\in\irrational$ by letting $g(n)$ be the maximum of the slice at $n$ (or
arbitrary if the slice is empty).
Now there exists $f\in F$ with $g\lefnt f$, which implies $c\cap b_f$ is finite,
proving the maximality of $[C]$ in $\co(B)$.
\end{proof}

Recall that the \emph{character} of a filter is the smallest cardinality of a filter-base generating
said filter.

\begin{definition}
Let $\d^*$ be the smallest cardinality of a family $[H]\subseteq \pN\div\Fin$
for which $\co([H])$ contains a filter of character less than $\p$ with no lower
bound in $\co([H])$.
\end{definition}

Working instead with representatives, $\d^*$ is the smallest cardinality of a family~$H$ of infinite
subsets of $\N$, for which there exists a filter-base $F$ on $\N$ of cardinality $|F|<\p$,
consisting entirely of \emph{thick} sets, i.e.~sets having an infinite intersection with every member of
$H$, but having no thick pseudo-intersection.

Note that without the restriction on the character of the filter, the
singleton $\{[\N]\}$ satisfies $\co(\{[\N]\})=\pN\div\Fin$, which has
a filter of character $\p$ with no lower bound by the very definition
of $\p$. 

\begin{prop}
  \label{prop:d-star-ub}
  $\d^*\le\d$. Moreover, $\p=\aleph_1$ implies $\d^*=\d$.
\end{prop}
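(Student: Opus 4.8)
The plan is to prove the two assertions separately, both by reducing to the characterization of $\d$ in Theorem~\ref{thm:d}. For the inequality $\d^*\le\d$, I would exhibit a witnessing family $[H]$ of size $\d$ together with a filter of character less than $\p$ in $\co([H])$ with no lower bound. The natural candidate is to reuse the construction from the ``conversely'' direction of the proof of Theorem~\ref{thm:d}: let $[F]\subseteq\irrfin$ be cofinal of cardinality $\d$, set $H=\{b_f:f\in F\}\subseteq\power(\N\times\N)$ with $b_f=\{(i,n):n>f(i)\}$, and take the filter generated by the $c_k=\{(i,n):i\ge k\}$, which is a \emph{countable} filter-base — hence of character $\aleph_0<\aleph_1\le\p$ by Hausdorff's theorem. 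That proof already shows $[C]$ has no lower bound in $\co([H])$, so this same data witnesses $\d^*\le\d$. One only needs to identify $\power(\N\times\N)/\Fin$ with $\pN\div\Fin$ via a bijection $\N\times\N\to\N$, which is routine.

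For the second assertion, assume $\p=\aleph_1$; since $\d^*\le\d$ is already established, it suffices to prove $\d^*\ge\d$, i.e.\ that any witnessing family $[H]$ for $\d^*$ has cardinality at least $\d$. So suppose $|H|<\d$ and let $F$ be a filter-base on $\N$ of cardinality $|F|<\p=\aleph_1$, hence countable, with every member of $F$ thick with respect to $H$ (infinite intersection with each $a\in H$); the goal is to produce a thick pseudo-intersection, contradicting the assumption that $[H]$ witnesses $\d^*$. This is precisely where the first (``suppose'') direction of the proof of Theorem~\ref{thm:d} applies: enumerate $F=\{a_k:k<\omega\}$ as a decreasing filter-base (replacing $a_k$ by $\bigcap_{i\le k}a_i$), and for each $g\in H$ define $h_g(n)=\min\bigl(g\cap\bigcap_{i\le n}a_i\setminus n\bigr)$, which is well-defined precisely because each $a_k$ is thick. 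Since $|H|<\d$, there is $f:\N\to\N$ with $f\nlefnt h_g$ for all $g\in H$, and then $x=\bigcup_n\bigl(\bigcap_{k\le n}a_k\cap[0,f(n))\bigr)$ is a pseudo-intersection of $F$ with $x\cap g$ infinite for every $g\in H$ — that is, a thick pseudo-intersection. This contradiction gives $|H|\ge\d$, hence $\d^*\ge\d$.

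The main subtlety — and the only place one must be slightly careful rather than just quoting the earlier proof verbatim — is matching up the quantifiers in the two definitions. In Theorem~\ref{thm:d} the filter-base in $\co([G])$ is required to be \emph{countable}; in the definition of $\d^*$ it is required to have character less than $\p$. Under $\p=\aleph_1$ these coincide (a filter of character $<\aleph_1$ has a countable base), so the reduction is clean; without $\p=\aleph_1$ one only gets the inequality. I should also make sure the filter produced in the first construction genuinely has character $<\p$ as a filter (not merely a base of size $<\p$) — but the base $\{[c_k]:k<\omega\}$ is countable outright, so this is immediate and does not even use $\p=\aleph_1$ for the $\le$ direction. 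So the real content is entirely bookkeeping: recognizing that the forward direction of Theorem~\ref{thm:d} is exactly the statement ``$|H|<\d$ and $F$ countably generated and thick implies a thick pseudo-intersection exists,'' which is the contrapositive of $\d^*\ge\d$ under CH-for-$\p$.

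\begin{proof}
$\d^*\le\d$: Fix a cofinal $[F]\subseteq\irrfin$ with $|F|=\d$, and via a bijection $\N\times\N\cong\N$ regard $\power(\N\times\N)\div\Fin$ as $\pN\div\Fin$. With $b_f$ and $c_k$ as in the proof of Theorem~\ref{thm:d}, put $[H]=\{[b_f]:f\in F\}$. The filter generated by the countable decreasing family $\{[c_k]:k<\omega\}\subseteq\co([H])$ has character $\aleph_0<\aleph_1\le\p$, and by the argument given there it has no lower bound in $\co([H])$. Hence $\d^*\le|H|\le\d$.

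$\p=\aleph_1\implies\d^*\ge\d$: Let $[H]\subseteq\pN\div\Fin$ be such that $\co([H])$ contains a filter of character $<\p=\aleph_1$, hence with a countable base $\{[a_k]:k<\omega\}$, having no lower bound in $\co([H])$; we may take the $a_k$ decreasing. Working with representatives, each $a_k$ is thick with respect to $H$. Suppose toward a contradiction that $|H|<\d$. For $g\in H$ define $h_g(n)=\min\bigl(g\cap\bigcap_{i=0}^{n}a_i\setminus n\bigr)$, well-defined since each $a_k$ is thick. Since $|H|<\d$ there is $f:\N\to\N$ with $f\nlefnt h_g$ for all $g\in H$. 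Then
\begin{equation}
  x=\bigcup_{n=0}^{\infty}\Bigl(\bigcap_{k=0}^{n}a_k\cap[0,f(n))\Bigr)
\end{equation}
satisfies $x\subseteqfnt a_k$ for all $k$, and $x\cap g$ is infinite for every $g\in H$ (whenever $f(n)>h_g(n)$, the set $g\cap\bigcap_{k=0}^{n}a_k\cap[0,f(n))\setminus n$ is nonempty). Thus $[x]\in\co([H])$ is a lower bound of $\{[a_k]:k<\omega\}$, a contradiction. Hence $|H|\ge\d$, giving $\d^*\ge\d$; combined with $\d^*\le\d$ this yields $\d^*=\d$.
\end{proof}
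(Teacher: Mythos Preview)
Your proof is correct. The paper states this proposition without proof, evidently regarding it as immediate from Theorem~\ref{thm:d} and its proof; your argument is exactly the natural unpacking of that implicit reference, reusing the cofinal-family construction for $\d^*\le\d$ and the diagonalization for the reverse inequality under $\p=\aleph_1$.
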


\begin{lem}
  $\p\le\d^*$.
\end{lem}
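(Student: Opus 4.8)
The plan is to reduce the inequality to Bell's theorem, i.e.\ to the fact that $\p$ is the least cardinal for which Martin's axiom for $\sigma$-centered forcings can fail when applied to $<\p$ dense sets. Suppose towards a contradiction that $\d^*<\p$, and fix a witnessing family $H$ of infinite subsets of $\N$ together with a filter-base $F$ on $\N$ with $|H|,|F|<\p$, every member of $F$ \emph{thick} (meeting every element of $H$ in an infinite set), and with no thick pseudo-intersection. Let $\Q$ be the standard forcing for adjoining a pseudo-intersection to $F$: its conditions are pairs $(s,F_0)$ with $s\in\powfin{\N}$ and $F_0\in\powfin{F}$, ordered by $(s',F_0')\le(s,F_0)$ iff $s\subseteq s'$, $F_0\subseteq F_0'$ and $s'\setminus s\subseteq\bigcap F_0$. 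This $\Q$ is $\sigma$-centered: any two conditions sharing the first coordinate $s$ are compatible (take the union of their second coordinates), so $\Q$ is the union of the countably many centered sets indexed by $\powfin{\N}$.

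Next I would exhibit the dense sets to be met. For $m\in\N$ put $G_m=\{(s,F_0):|s|\ge m\}$; this is dense because $\bigcap F_0$ is infinite (the finite sub-intersections of the filter-base $F$ are infinite), so $s$ can always be enlarged inside $\bigcap F_0$. For $a\in F$ put $D_a=\{(s,F_0):a\in F_0\}$, which is dense and open by enlarging the second coordinate. For $h\in H$ and $m\in\N$ put $E_{h,m}=\{(s,F_0):|s\cap h|\ge m\}$; the density of $E_{h,m}$ is the one place where all the hypotheses are used. Given $(s,F_0)$, pick $a\in F$ with $a\subseteqfnt\bigcap F_0$ (filter-base property); since $a$ is thick, $a\cap h$ is infinite, hence so is $(\bigcap F_0)\cap h$, and one may adjoin to $s$ some $m$ of its elements lying above $\max s$ to reach a condition in $E_{h,m}$. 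The whole family of these dense sets has size at most $\max(|F|,|H|,\aleph_0)<\p$, using $\p\ge\aleph_1$ (Hausdorff). Bell's theorem therefore supplies a filter $\G\subseteq\Q$ meeting all of the $G_m$, $D_a$, and $E_{h,m}$.

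Finally, put $x=\bigcup\{s:(s,F_0)\in\G\text{ for some }F_0\}$. Meeting the sets $G_m$ makes $x$ infinite, and meeting the sets $E_{h,m}$ gives $|x\cap h|=\aleph_0$ for every $h\in H$, so $x$ is thick. For $a\in F$, choose $(s_0,F_0)\in\G\cap D_a$; for any $(s_1,F_1)\in\G$, directedness of $\G$ yields a common extension $(t,F_2)\in\G$, and then $s_1\setminus s_0\subseteq t\setminus s_0\subseteq\bigcap F_0\subseteq a$, so that $x\setminus a\subseteq s_0$ is finite, i.e.\ $x\subseteqfnt a$. Thus $x$ is a thick pseudo-intersection of $F$, contradicting the choice of $F$ and $H$, and hence $\p\le\d^*$. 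I expect the only genuinely nontrivial point to be the density of the sets $E_{h,m}$, which is precisely where the thickness of the members of $F$ (together with the filter-base property) is consumed; the identification of $\Q$ as $\sigma$-centered, so that Bell's theorem applies, is routine.
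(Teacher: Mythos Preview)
Your proof is correct. The paper itself does not supply an argument here but only cites Fremlin, and your proof via Bell's theorem applied to the natural $\sigma$-centered poset for adjoining a thick pseudo-intersection is precisely the standard argument that citation points to; one cosmetic simplification is that in verifying the density of $E_{h,m}$ you need not locate $a\in F$ with $a\subseteqfnt\bigcap F_0$, since $\bigcap F_0$ already lies in the filter generated by $F$, which is contained in $\co([H])$ by hypothesis, so $(\bigcap F_0)\cap h$ is infinite directly.
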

\begin{proof}
  See for example~\cite[21A]{fremlin1984cons}.
\end{proof}

\begin{lem}
  \label{lem:refine-to-tower}
If $\p<\d^*$, then every filter-base of cardinality $\p$ can be refined to a tower.
\end{lem}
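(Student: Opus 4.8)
The plan is to push Rothberger's argument for \cite[Lemma~6]{rothberger1948some} through to arbitrary $\p$ by a single transfinite recursion of length $\p$, the hypothesis $\p<\d^*$ justifying every stage uniformly; in particular, unlike in Rothberger's proof, there is no case split according to the cofinality of the stage (the ``$\d^*$ machine'' below handles filters of any character $<\p$, limit or not, in the same way). Let $\F$ be a filter-base of cardinality $\p$. Replacing $\F$ by the family of all finite intersections of its members --- a family of the same cardinality, consisting of infinite sets, closed under finite intersections, and any refinement of which refines $\F$ --- we may assume that $\F$ is itself closed under finite intersections; note $\F\subseteq\co([\F])$ already by the finite intersection property. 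Fix an enumeration $\F=\{b_\alpha:\alpha<\p\}$.

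By recursion on $\alpha<\p$ I would construct infinite sets $a_\alpha\subseteq\N$ so that \textup{(i)} $[a_\alpha]\in\co([\F])$, i.e.\ $a_\alpha$ meets every member of $\F$ in an infinite set; \textup{(ii)} $a_\alpha\subseteqfnt b_\alpha$; and \textup{(iii)} $a_\alpha\subseteqfnt a_\beta$ for every $\beta<\alpha$. Granting this, the chain $T=\{[a_\alpha]:\alpha<\p\}$ is $\supseteqfnt$-decreasing and consists of nonzero elements of $\pnfin$ (nonzero since $0\notin\co([\F])$), hence is a tower, and by \textup{(ii)} it refines $\F$, proving the lemma. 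To carry out stage $\alpha$, set $F_\alpha=\{a_\beta:\beta<\alpha\}\cup\{b_\alpha\}$; as $\p$ is a cardinal and $\alpha<\p$, we have $|F_\alpha|\le|\alpha|+1<\p$. By \textup{(i)} each earlier $a_\beta$ meets $b_\alpha\in\F$ in an infinite set, and by \textup{(iii)} the $a_\beta$ form a $\subseteqfnt$-decreasing chain, so every finite intersection of members of $F_\alpha$ equals, modulo finite, one of $a_\beta\cap b_\alpha$, $a_\beta$ or $b_\alpha$, hence is infinite; thus $F_\alpha$ is a filter-base and the filter $G_\alpha$ it generates has character $\le|F_\alpha|<\p$.

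The crux is that $G_\alpha\subseteq\co([\F])$, and here closure of $\F$ under finite intersections is essential. A member of $G_\alpha$ contains, modulo finite, either $b_\alpha$ or a set $a_\beta\cap b_\alpha$ with $\beta<\alpha$; and for any $h\in\F$ the set $b_\alpha\cap h$ again lies in $\F$, so --- using the thickness of $a_\beta$ in the second case --- that member meets $h$ in an infinite set. Hence every member of $G_\alpha$ meets every element of $[\F]$, so $G_\alpha\subseteq\co([\F])$. Since $[\F]$ has cardinality at most $\p<\d^*$, the definition of $\d^*$ forbids $\co([\F])$ from containing a character-$<\p$ filter with no lower bound in $\co([\F])$; as $G_\alpha$ is precisely such a filter, it has a lower bound in $\co([\F])$. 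Any representative $a_\alpha$ of such a bound is infinite (as $0\notin\co([\F])$), satisfies $[a_\alpha]\in\co([\F])$, and lies below every member of $G_\alpha\supseteq[F_\alpha]$, which gives \textup{(i)}--\textup{(iii)} and closes the recursion.

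The only step I expect to require care is the verification $G_\alpha\subseteq\co([\F])$ --- that is, arranging that the chain being built stays thick with respect to \emph{all} of $\F$ and not merely below the single constraint $b_\alpha$ active at stage $\alpha$ --- for which passing to a $\F$ closed under finite intersections is the right bookkeeping device. Everything else is a direct unwinding of the definitions of $\p$, $\d^*$ and $\co(\cdot)$, together with the elementary fact that $|\alpha|<\p$ for every $\alpha<\p$, which is what permits the recursion to run all the way to length $\p$. (If one also wished to allow $|\F|<\p$ the conclusion would be immediate, since then $\F$ has an infinite pseudo-intersection $x$ and $\{[x]\}$ is a one-element tower refining $\F$.)
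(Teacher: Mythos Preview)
Your proof is correct and follows essentially the same recursion as the paper's: build a $\subseteqfnt$-decreasing chain $(a_\alpha)_{\alpha<\p}$ with $a_\alpha\subseteqfnt b_\alpha$ while maintaining $[a_\alpha]\in\co([\F])$, invoking $|\F|=\p<\d^*$ together with $|\alpha|<\p$ at each stage. Your explicit passage to a $\F$ closed under finite intersections (and your inclusion of $b_\alpha$ in $F_\alpha$ rather than intersecting with it afterwards) makes rigorous a point the paper's short proof leaves implicit---namely that the resulting extension still lies in $\co([\F])$.
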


The proof of Lemma~\ref{lem:refine-to-tower} is the same idea as \cite[Lemma~6]{rothberger1948some}
but with $\d^*$ in place of $\b$.

\begin{proof}[Proof of Lemma~\ref{lem:refine-to-tower}]
  Let $x_\alpha\subseteq\N$ ($\alpha<\p$) enumerate representatives of a filter-base $[X]\subseteq\pnfin$.
  A refinement $t_\alpha\subseteq\N$ ($\alpha<\p$) is defined by recursion on $\alpha$ satisfying
  \begin{enumerate}
  \item\label{item:2} $t_\alpha\subseteqfnt t_\xi$ for all $\xi<\alpha$,
  \item $t_\alpha\subseteq x_\alpha$,
  \item\label{item:1} $t_\alpha\cap x_\xi$ is infinite for all $\xi<\p$.
  \end{enumerate}
  Condition~\eqref{item:1} states that $[t_\xi:\xi<\alpha]\subseteq\co([X])$. Therefore, as the
  $t_\xi$'s form a tower, and in particular a filter-base, and $|X|<\d^*$, there exists $t'_\alpha$
  satisfying conditions~\eqref{item:2} and~\eqref{item:1}. Now let $t_\alpha=t'_\alpha\cap x_\alpha$.
\end{proof}

Notice this gives the following strengthening of the result of Rothberger \cite[Lemma
6]{rothberger1948some}. 

\begin{cor}
  \label{cor:strengthen}
  If $\p=\aleph_1$ then $\d>\p$ implies that every filter-base of cardinality $\p$ can be refined to
  a tower.
\end{cor}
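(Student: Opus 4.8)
The plan is to read this corollary straight off the two results that precede it, with no new argument needed. First I would invoke the second assertion of Proposition~\ref{prop:d-star-ub}: under the hypothesis $\p=\aleph_1$ one has $\d^*=\d$. Feeding in the remaining hypothesis $\d>\p$, this gives $\d^*=\d>\p$, i.e.\ $\p<\d^*$. At that point Lemma~\ref{lem:refine-to-tower} applies verbatim and yields exactly the desired conclusion, namely that every filter-base of cardinality $\p$ can be refined to a tower. So the proof is essentially a two-line unwinding: derive $\p<\d^*$ from the hypotheses, then quote Lemma~\ref{lem:refine-to-tower}.

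The only point I would take care to highlight is \emph{why} this genuinely strengthens Rothberger's \cite[Lemma~6]{rothberger1948some}, whose hypothesis was $\b>\p$ rather than $\d>\p$. Since $\b\le\d$ always holds, the new hypothesis $\d>\p$ is weaker than $\b>\p$, so the implication obtained here is correspondingly stronger; concretely, if $\b>\p$ then $\d\ge\b>\p$, so the old case is subsumed. The surplus is precisely accounted for by Proposition~\ref{prop:d-star-ub} (giving $\d^*=\d$ when $\p=\aleph_1$) together with the fact, already recorded above, that the recursive construction of Lemma~\ref{lem:refine-to-tower} runs with $\d^*$ in place of $\b$.

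I do not expect any substantive obstacle in the corollary itself: all the work has been discharged elsewhere — in establishing $\d^*=\d$ under $\p=\aleph_1$, and in the transfinite recursion behind Lemma~\ref{lem:refine-to-tower}. The ``hard part'', such as it is, is purely bookkeeping: confirming that the chain $\p=\aleph_1 \Rightarrow \d^*=\d$ and $\d>\p \Rightarrow \d^*>\p$ legitimately lands us in the scope of Lemma~\ref{lem:refine-to-tower}, after which the conclusion is immediate.
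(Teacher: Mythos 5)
Your proof is correct and is exactly the route the paper intends: combine Proposition~\ref{prop:d-star-ub} (which gives $\d^*=\d$ under $\p=\aleph_1$, hence $\p<\d^*$ from $\d>\p$) with Lemma~\ref{lem:refine-to-tower}. The paper presents the corollary as an immediate consequence of these two results, so your two-line unwinding matches its argument.
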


The previous two results can be expressed in terms of $\lr$, as can Question~\ref{ques:optimality}
below. Lemma~\ref{lem:refine-to-tower} can be
stated equivalently as $\p<\d^*$ implies $\p<\lr$, or in the contrapositive as $\p=\lr$ implies
$\p=\d^*$. Corollary~\ref{cor:strengthen} can be
expressed as $\lr=\aleph_1$ implies $\d=\aleph_1$,
which appeared earlier as~\cite[Theorem~2.2]{machura_linear_2016}. 

\begin{cor}
  If $\t\le\d^*$ then $\p=\t$.
\end{cor}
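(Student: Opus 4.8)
The final corollary states that $\t \le \d^*$ implies $\p = \t$. The plan is to combine Lemma~\ref{lem:refine-to-tower} with the already-established inequalities $\p \le \t$ and $\t \le \b$, together with the observation that a filter-base witnessing $\p$ is a filter-base of cardinality $\p$ that needs refining to a tower. The proof should be very short — essentially a deduction from the preceding lemma — so I will not need any new combinatorics.

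\begin{proof}
We may assume $\p < \t$, since $\p \le \t$ always holds; our goal is to derive a contradiction. Fix a filter-base $[X] \subseteq \pnfin$ of cardinality $\p$ with no nonzero lower bound, witnessing the definition of $\p$. Since $\p < \t \le \d^*$, Lemma~\ref{lem:refine-to-tower} applies and yields a tower refining $[X]$, say $t_\alpha \subseteq \N$ for $\alpha < \p$ with $t_\alpha \supseteqfnt t_\beta$ for $\alpha < \beta$, each $[t_\alpha]$ a lower bound for some member of $[X]$. This tower has order-type at most $\p < \t$, so by the definition of $\t$ as the least order-type of a \emph{maximal} tower, our tower is not maximal: it has an infinite pseudo-intersection $[y] \in \pnfin$, $[y] \ne [\emptyset]$. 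But since $(t_\alpha)_{\alpha<\p}$ refines $[X]$, every member of $[X]$ contains some $[t_\alpha] \supseteqfnt [y]$ modulo finite, so $[y]$ is a nonzero lower bound for $[X]$, contradicting the choice of $[X]$. Hence $\p = \t$.
\end{proof}

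The only point requiring a moment's care — and the closest thing to an obstacle — is the passage from ``the refining tower is non-maximal'' to ``it has a nonzero pseudo-intersection'': one must be sure that a non-maximal tower genuinely admits an infinite pseudo-intersection below it, which is immediate from the definition of maximality for towers (a tower is maximal precisely when it cannot be properly end-extended, equivalently when it has no infinite pseudo-intersection that is not already a member), rather than merely failing to be cofinal in some other sense. Everything else is bookkeeping with the quotient $\pnfin$, tracking that ``refinement'' plus ``pseudo-intersection of the refinement'' delivers a common lower bound for the original filter-base. One could alternatively phrase the whole argument contrapositively via the $\lr$-reformulation noted just above (Lemma~\ref{lem:refine-to-tower} as ``$\p < \d^*$ implies $\p < \lr$''), but the direct filter-base argument is the cleanest.
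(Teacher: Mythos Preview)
Your proof is correct and is precisely the intended deduction: the paper states this corollary without proof, but the argument you give---assume $\p<\t\le\d^*$, invoke Lemma~\ref{lem:refine-to-tower} to refine a $\p$-sized filter-base with no pseudo-intersection to a tower of order-type $\p<\t$, then note such a tower must have a pseudo-intersection---is exactly the natural derivation from the surrounding lemmas (and mirrors Rothberger's original reasoning for the $\aleph_1$ case). One cosmetic point: your parenthetical description ``each $[t_\alpha]$ a lower bound for some member of $[X]$'' is the converse of the refinement property, but you correctly use the actual refinement direction (``every member of $[X]$ contains some $[t_\alpha]$'') in the decisive step, so the argument is sound.
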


The main result of this paper the following.

\begin{thm}
  \label{thm:main}
  $\b\le\d^*$
\end{thm}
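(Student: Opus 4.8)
The plan is to prove the contrapositive: assuming $\d^*<\b$, I derive a contradiction by producing, for the witnessing data, the very thick pseudo-intersection that is supposed not to exist. Fix a family $H$ of infinite subsets of $\N$ with $|H|=\d^*$ together with a filter-base $F$ with $|F|<\p$ each member of which meets every member of $H$ infinitely, such that $F$ admits no thick pseudo-intersection; since $\p\le\d^*$ by the lemma above, we have $\p<\b$, so $|F|$ and $|H|$ are both strictly below $\b$. The goal is to run the measure-theoretic strategy of \S\ref{sec:meas-theor-strat}: build a partition of $\N$, a product measure $\mu$ on $\pN$, and transformations $\varphi_F,\varphi_H$, and then read a thick pseudo-intersection of $F$ off the measure.

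First I would choose the partition. Enumerating each set $t\cap b$ ($t\in F$, $b\in H$) in increasing order gives a family of fewer than $\b$ functions $\N\to\N$, and dominating that family yields an interval partition $\N=\bigsqcup_n I_n$ into consecutive finite blocks with $|t\cap b\cap I_n|\ge n$ for all but finitely many $n$, simultaneously for every pair $(t,b)$. To the blocks I attach probability measures $\mu_n$ on $\power(I_n)$, put $\mu=\bigotimes_n\mu_n$ on $\pN=\prod_n\power(I_n)$, and set $\varphi_H(b)=\{y:y\cap b\cap I_n\ne\emptyset$ for all but finitely many $n\}$ and $\varphi_F(x)=\{y:y\cap I_n\subseteq x$ for infinitely many $n\}$. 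Property~\eqref{eq:17} is then immediate, since any $y\in\varphi_F(x)\cap\varphi_H(b)$ produces, in infinitely many distinct blocks, a point of $y\cap b$ lying inside $x$. The events ``$y\cap b\cap I_n=\emptyset$'' are $\mu$-independent, so Borel--Cantelli gives $\mu(\varphi_H(b))=1$ as soon as $\sum_n\mu_n(\{s\subseteq I_n:s\cap b=\emptyset\})<\infty$; for the fair-coin measures $\mu_n$ this sum is at most $\sum_n2^{-|b\cap I_n|}<\infty$ by the choice of partition, so the first half of the strategy — a measure with $\mu(\varphi_H(b))=1$ for every $b\in H$ — is in hand. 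Finally, $\varphi_F(x)$ being a tail event, $\mu(\varphi_F(x))\in\{0,1\}$, equal to $1$ exactly when $\sum_n\mu_n(\power(x\cap I_n))=\infty$.

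It then remains to exhibit a pseudo-intersection $x$ of $F$ with $\mu(\varphi_F(x))>0$: granting this, $\mu(\varphi_F(x)\cap\varphi_H(b))\ge\mu(\varphi_F(x))>0$ for every $b\in H$ since $\mu(\varphi_H(b))=1$, hence $x\cap b$ is infinite by~\eqref{eq:17}, and $x$ is the forbidden thick pseudo-intersection. The hard part will be precisely this step, and the fair-coin measure fails it: if $F$ contains any sufficiently sparse $t$, then every $x\subseteqfnt t$ has $|I_n\setminus x|\to\infty$, so $\sum_n2^{-|I_n\setminus x|}<\infty$ and $\mu(\varphi_F(x))=0$, while skewing $\mu_n$ onto $\power(t\cap I_n)$ to repair this drives $\mu(\varphi_H(b))$ back to $0$ unless $x$ is already thick — Rothberger's uncountable-limit obstruction in new clothing. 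To get past it I would build the partition measures by recursion along an enumeration of $F$ of length $<\p$, carrying at each stage a partial pseudo-intersection and a measure certifying both conditions, the difficulty concentrated at limit stages of uncountable cofinality; here the ``imaginary'' measures of \S\ref{sec:limit-as-products} enter, available only in a forcing extension collapsing $\aleph_1$ (hence $|H|$) to $\aleph_0$, where those limit stages are countable and Rothberger's recursion from \cite[Lemma~6]{rothberger1948some} applies directly. Since the approximating poset for $x$ is $\sigma$-centered and the statements ``the relevant dense sets are dense'' and ``meeting them forces thickness'' are absolute between the extension and $V$, Bell's theorem \cite{bell_combinatorial_1981} together with $|F|<\p$ then produces $x$ already in $V$, where $\mu(\varphi_H(b))=1$ and~\eqref{eq:17} still hold, completing the contradiction. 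I expect the isolation of the correct imaginary measure, and the bookkeeping needed to descend from the collapse extension back to $V$, to be the real obstacle; the partition, the product measure, and the verification of~\eqref{eq:17} are routine.
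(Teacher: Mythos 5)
The front half of your argument is faithful to the paper's strategy: dominating fewer than $\b$ enumerating functions to get an interval partition with $|t\cap b\cap I_n|\ge n$ almost always, taking the product of fair-coin block measures, defining $\varphi_F$ and $\varphi_H$, checking~\eqref{eq:17}, getting $\mu(\varphi_H(b))=1$ by Borel--Cantelli, and observing that the fair-coin measure makes $\varphi_F(x)$ null is exactly \S\ref{sec:meas-theor-strat}, Lemma~\ref{lem:summable-partition} and Corollary~\ref{cor:meas-one-h}. But that is only the strategy; the entire content of the theorem sits in the step you defer, and your sketch of that step does not work as described.

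Two concrete problems. First, your plan to collapse $\aleph_1$ so that ``Rothberger's recursion applies directly'' and then descend by ``absoluteness plus Bell'' has no mechanism: running \cite[Lemma~6]{rothberger1948some} in a collapse extension produces a pseudo-intersection in the extension, where there are new reals, not in $V$; and the measure you would need for the positive-measure requirement, namely the paper's $\mu_{f_{\dot\V},h}$, is defined only from the generic cofinal map $f_{\dot\V}:\N\to\theta$, so the ``relevant dense sets'' cannot even be formulated in $V$ without further machinery --- the asserted absoluteness is precisely what has to be proved. Second, the paper's actual route supplies exactly the missing machinery: reduce to a tower $T$ of order-type $\theta<\p$ of uncountable cofinality (countable cofinality is disposed of via Theorem~\ref{thm:d}); force with $\nso\theta^+$ rather than a collapse of $\aleph_1$; fix a ladder system and an error-correction function $h$ (Definition~\ref{def:satisfy}, Proposition~\ref{prop:satisfy}) so that $\mu_{f_{\dot\V},h}\bigl(\intfin[\pi\circ h](b)\bigr)=1$ survives into $V[\V]$ (Corollary~\ref{cor:intfin}); and index the Bell dense sets $D_q\subseteq P_T$ by the tree $Q(\nso\theta^+;\ell)$ of size $\le\theta<\p$ (Definition~\ref{def:dense}, Lemma~\ref{lem:density}). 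The decisive step, which your ``bookkeeping needed to descend'' elides, is Theorem~\ref{thm:stat-embedding}: $\Delta(\cdot;\ell)$ maps predense subsets of $Q(\nso\theta^+)$ to predense subsets of $\nso\theta^+$. That complete-embedding result is what converts a filter $G\subseteq P_T$, generic in $V$ for fewer than $\p$ dense sets via Bell's theorem, into the forced statement $\limsup_{n\to\infty}\mu_{f_{\dot\V}(n)}\bigl(\bigl\<\power(x_G\circ h(n))\bigr\>\bigr)=1$ (Theorem~\ref{thm:pseudo-intersection}), hence into $\pinf[\pi\circ h](x_G)$ having measure one in $V[\V]$; thickness of $x_G$ is then verified in $V[\V]$ and pulled back only through $\Delta_0$-absoluteness of ``$x\cap b$ is infinite'', never of any measure statement. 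Without the ladder system, the error correction, the tree $Q$, and the embedding theorem, your proposal has no path from Bell's theorem in $V$ to positive measure for $\varphi_F(x)$ in the extension --- and that path is the theorem.
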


This answers Rothberger's Question~\ref{ques:rothberger} affirmatively.

\begin{cor}
  If $\p<\b$, then every filter-base of cardinality $\p$ can be refined to a tower. 
\end{cor}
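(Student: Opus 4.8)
The plan is to derive this corollary immediately by chaining together Theorem~\ref{thm:main} and Lemma~\ref{lem:refine-to-tower}. Assume $\p<\b$. By Theorem~\ref{thm:main} we have $\b\le\d^*$, and therefore $\p<\d^*$. Now Lemma~\ref{lem:refine-to-tower} applies verbatim: since $\p<\d^*$, every filter-base of cardinality $\p$ can be refined to a tower. That is the entire argument.

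There is no genuine obstacle at this stage — all the difficulty has been absorbed into the proof of Theorem~\ref{thm:main} (constructing the thick pseudo-intersections via partition measures), which we are entitled to assume here. The only thing worth double-checking is the bookkeeping in the inequalities: Lemma~\ref{lem:refine-to-tower} is stated under the strict hypothesis $\p<\d^*$, and $\p<\b\le\d^*$ indeed gives strict inequality $\p<\d^*$, so the hypothesis is met exactly as needed. One could also phrase the conclusion in the equivalent language of Question~\ref{ques:rothberger}: the corollary is precisely the affirmative answer, i.e. $\b>\p$ implies every filter-base of cardinality $\p$ has a tower refinement, matching the generalization of \cite[Lemma~6]{rothberger1948some} from $\p=\aleph_1$ to arbitrary $\p$ discussed in the introduction.

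\begin{proof}
Assume $\p<\b$. By Theorem~\ref{thm:main}, $\b\le\d^*$, so $\p<\d^*$. Hence by Lemma~\ref{lem:refine-to-tower}, every filter-base of cardinality $\p$ can be refined to a tower.
\end{proof}
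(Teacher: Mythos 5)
Your proof is correct and is exactly the route the paper intends: the corollary follows immediately by combining Theorem~\ref{thm:main} ($\b\le\d^*$) with Lemma~\ref{lem:refine-to-tower}, so $\p<\b$ gives $\p<\d^*$ and hence the tower refinement. The inequality bookkeeping you checked is right, and nothing more is needed.
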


Even more can be said by expressing this using the cardinal characteristic $\lr$.

\begin{cor}
  \label{cor:ans-more}
  $\p<\b$ implies that\/ $\p<\lr\le\b$.
\end{cor}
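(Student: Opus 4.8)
We separate the conclusion into the two inequalities $\p < \lr$ and $\lr \le \b$, both to be established under the hypothesis $\p < \b$. The first is essentially Theorem~\ref{thm:main}: by that theorem $\b \le \d^*$, so the hypothesis gives $\p < \d^*$, and as recorded immediately after Lemma~\ref{lem:refine-to-tower} this is equivalent to $\p < \lr$. Unwound: Lemma~\ref{lem:refine-to-tower} refines every filter-base of cardinality $\p$ to a tower, hence to a $\supseteqfnt$-linearly ordered family (a tower is $\supseteqfnt$-well-ordered), while a filter-base of cardinality $< \p$ has an infinite pseudo-intersection whose singleton is a linear refinement; so the least cardinality of a filter-base with no linear refinement exceeds $\p$.

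For $\lr \le \b$ I would reuse the construction from the proof of Theorem~\ref{thm:d}. Fix a $\lefnt$-increasing unbounded family $\langle f_\alpha : \alpha < \b\rangle$ in $\irr$ and put $b_\alpha = \{(i,n) \in \N\times\N : n > f_\alpha(i)\}$, the area strictly above the graph of $f_\alpha$. Then $\alpha < \beta$ gives $f_\alpha \lefnt f_\beta$, hence $b_\alpha \supseteqfnt b_\beta$, and every finite intersection $b_{\alpha_0}\cap\cdots\cap b_{\alpha_k}$ is the infinite area above $\max_j f_{\alpha_j}$; so, identifying $\N\times\N$ with $\N$, the family $\F := \{b_\alpha : \alpha < \b\}$ is a filter-base of cardinality $\b$. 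Suppose toward a contradiction that $\lr > \b$. Then $|\F| < \lr$, so by \cite[Lemma~2.5]{machura_linear_2016} there is a maximal tower $W$ of cardinality $\p$ refining $\F$. For $w \in W$ the set $I_w := \{\alpha < \b : w \subseteqfnt b_\alpha\}$ is an initial segment of $\b$: if $w \subseteqfnt b_\alpha$ and $\beta < \alpha$ then $b_\alpha \subseteqfnt b_\beta$, whence $w \subseteqfnt b_\beta$. Since $W$ refines $\F$ we have $\bigcup_{w \in W} I_w = \b$; as $\b$ is regular and $|W| = \p < \b$, a union of $\p$ many proper initial segments of $\b$ is again a proper initial segment, so some $I_w$ must equal all of $\b$ — that is, the corresponding $w$ is an infinite pseudo-intersection of $\F$.

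What is left, then, is to guarantee that $\F$ has no infinite pseudo-intersection, and here the argument is not merely bookkeeping. An infinite $x$ with $x \subseteqfnt b_\alpha$ for every $\alpha$ must meet each column $\{i\}\times\N$ in a finite set and, on the infinite set of indices $i$ where it meets column $i$, its column-minimum function must eventually dominate every $f_\alpha$; so $\F$ has a pseudo-intersection exactly when $\langle f_\alpha : \alpha<\b\rangle$ becomes bounded upon restriction to some infinite subset of $\N$. One therefore wants to choose at the outset an unbounded family of cardinality $\b$ that is unbounded on every infinite subset of $\N$; the availability of such a family for the value $\b$ is the one point I would import from \cite{machura_linear_2016}, and I expect it to be the only real obstacle. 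Once the chosen family has that property, $\F$ is pseudo-intersection-free, the previous paragraph yields the contradiction, and $\lr \le \b$ follows; everything else is the regularity of $\b$ together with \cite[Lemma~2.5]{machura_linear_2016}, which does the work of compressing an arbitrary linear refinement down to cardinality $\p$.
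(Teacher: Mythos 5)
Your first half is exactly the paper's route and is fine: Theorem~\ref{thm:main} gives $\p<\b\le\d^*$, and the reformulation recorded after Lemma~\ref{lem:refine-to-tower} (that $\p<\d^*$ implies $\p<\lr$) yields $\p<\lr$. For the other inequality, however, the paper does not argue at all: it simply cites \cite[Theorem~2.7]{machura_linear_2016} for $\lr\le\b$ under the hypothesis $\p<\b$. You attempt to re-prove that cited result directly, and this is where there is a genuine gap.

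The problem is that your filter-base $\F=\{b_\alpha:\alpha<\b\}$ of regions strictly above the graphs always has an infinite pseudo-intersection, no matter how the unbounded family $\langle f_\alpha\rangle$ is chosen: for any fixed $i$, an infinite $x\subseteq\{i\}\times\N$ satisfies $x\setminus b_\alpha\subseteq\{i\}\times\bigl[0,f_\alpha(i)\bigr]$, which is finite, so $x\subseteqfnt b_\alpha$ for every $\alpha$. Hence your claim that a pseudo-intersection ``must meet each column in a finite set'' is false, and the remedy you propose (choosing the family unbounded on every infinite subset of $\N$) is beside the point, since it constrains the first coordinate and cannot exclude these one-column pseudo-intersections. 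Consequently the contradiction you aim for (that some $w$ in the maximal tower given by \cite[Lemma~2.5]{machura_linear_2016} is a pseudo-intersection of $\F$) is not a contradiction, and the argument for $\lr\le\b$ collapses. Since $\F$ is itself $\supseteqfnt$-well-ordered, what your scheme really needs is a tower of regular length $\b$ with \emph{no} pseudo-intersection; with such a tower your Lemma~2.5 plus regularity-of-$\b$ step would indeed refute $\b<\lr$, but producing it when $\t<\b$ is a nontrivial question about which lengths of pseudo-intersection-free towers exist, and your construction does not supply it. Repairing $\F$ by adjoining the column-tail sets $c_k=\{(i,n):i\ge k\}$, as in the proof of Theorem~\ref{thm:d}, destroys the linear ordering on which your initial-segment argument rests, and what would remain to be proved is essentially the content of the result the paper outsources to \cite[Theorem~2.7]{machura_linear_2016}. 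As written, only the $\p<\lr$ half of the corollary is established.
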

\begin{proof}
  Using~\cite[Theorem~2.7]{machura_linear_2016}.
\end{proof}

Left unanswered are the optimal bounds on $\d^*$. For example, playing the same game as with
Rothberger's questions one can ask whether the cardinality assumption on $\p$ in
Proposition~\ref{prop:d-star-ub} can be dropped:

\begin{question}
  \label{ques:equal}
  Does $\d^*=\d$?
\end{question}

This in turn is a question about the optimality of our strengthening of Rothberger's result:

\begin{question}
  \label{ques:optimality}
  If $\p<\d$, can every filter-base of cardinality $\p$ be refined to a tower?
\end{question}

The following question posed by Tsaban (private communication) is a promising line of inquiry in this
direction. A positive answer would be a major development as a strengthening
of our main result, and thus an even stronger version of $\p=\t$; while a negative answer would
provide important information on bounds for $\d^*$ by refuting the equality of Question~\ref{ques:equal}.

\begin{question}[Tsaban]
  \label{ques:tsaban}
  Is\/ $\lr\le\d^*$?
\end{question}

\section{Ultraproducts}
\label{sec:ultraproducts}

For purposes of this section, we let $I$ be an arbitrary index set.
An \emph{ultrafilter on $I$} is a maximal proper filter $\U\subseteq\power(I)$.
It thus has the property that for any $x\subseteq I$,
exactly one of $x$ or its complement is an element of $\U$.

The \emph{set ultraproduct} of a family of sets $A_i$ ($i\in I$), denoted
$\prod_{i\in I}A_i\div\U$, or simply $[A]$ when clear from the context, is the Cartesian product $\prod_{i\in I}A_i$ modulo
the equivalence relation $x\equiv y\pmod\U$ if $\{i\in I:x(i)=y(i)\}\in\U$.
For a sequence $\F_i$ of fields of subsets of $X_i$ ($i\in I$),
the ultrafilter property entails that
the Boolean algebra operations on
the field\footnote{Here $\prod_iA_i\div\U$ consists of equivalence classes over the product
  $\prod_iX_i$.}\label{def-ultra-alg} 
$\F_\U=\bigl\{\prod_{i\in I}A_i\div\U:A_i\in\F_i\bigr\}$ correspond to the coordinate-wise
operations; for example, $[A]\cup[B]=\prod_{i\in I}A_i\cup\nobreak B_i\div\U$,
and similarly for intersections $\cap$ and complements $-$.   

Suppose that
$(\vomega_i,\M_i,\mu_i)$ ($i\in I$) is a sequence of measure spaces and $\U$
is an ultrafilter on $I$. Then the \emph{measure ultraproduct} of the
sequence with respect to $\U$ is the triple $(\vomega_\U,\sigma(\M_\U),
\mu_\U)$, where $\vomega_\U=\prod_{i\in I}\vomega_i\div \U$ is the set
ultraproduct, $\sigma(\M_\U)$ is the $\sigma$-algebra generated by $\M_\U$ and
\begin{equation}
  \mu_\U([A])=\lim_{i\to\U}\mu_i(A_i)
\end{equation}
for each $[A]\in\M_\U$ is well-defined and finitely additive. So long
as the $\mu_i(\vomega_i)$ are mutually bounded, $\mu_\U$ will take nonnegative real
values; otherwise, the limit is taken in the compact space
$[0,\infty)\cup\{\infty\}$.

With the following result the construction of $\mu_\U$ extends to a
measure on the $\sigma$-algebra generated by $\M_\U$. The ultraproduct
measure construction is due to Loeb~\cite{Loeb1975} in the setting of
nonstandard analysis. 

\begin{lem}[Loeb]
  \label{lem:loeb}
  If the ultrafilter $\U$ is not $\sigma$-complete, then the union of
  any countable infinite pairwise disjoint subset of $\M_\U$ is not a
  member of $\M_\U$. 
\end{lem}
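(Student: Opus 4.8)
The plan is to argue by contradiction, exploiting the fact that a non-$\sigma$-complete ultrafilter cannot distinguish a countable partition from its finite subunions. Suppose $\{[A^{(k)}] : k<\omega\}$ is a countable infinite pairwise disjoint family in $\M_\U$, with representatives $A^{(k)} = \prod_{i\in I} A^{(k)}_i \div \U$, and suppose toward a contradiction that its union $[B] = \bigcup_{k<\omega}[A^{(k)}]$ were a member of $\M_\U$, say $[B] = \prod_{i\in I} B_i \div \U$ with each $B_i \in \F_i$. First I would record the two facts that the ultrafilter property gives us coordinatewise: disjointness of $[A^{(k)}]$ and $[A^{(\ell)}]$ for $k\ne\ell$ means $\{i : A^{(k)}_i \cap A^{(\ell)}_i = \emptyset\}\in\U$, and the containment $[A^{(k)}]\subseteq[B]$ means $\{i : A^{(k)}_i \subseteq B_i\}\in\U$.

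Next I would use non-$\sigma$-completeness of $\U$ to fix a countable partition $I = \bigsqcup_{k<\omega} P_k$ of the index set with no $P_k \in \U$ (equivalently, a function $I \to \omega$ that is not constant on any set in $\U$; this is exactly what failure of $\sigma$-completeness provides). The key construction is then a single element $[C] \in \M_\U$ that is forced to lie in $[B]$ but to be disjoint from every $[A^{(k)}]$, contradicting that $[B]$ is the union. Concretely, for each $i \in I$ let $i$ lie in $P_{k(i)}$ and set $C_i$ to be a suitable ``leftover'' piece at coordinate $i$: one natural choice is $C_i = B_i \setminus \bigcup_{j \le k(i)} A^{(j)}_i$, so that $C_i \in \F_i$ (a finite Boolean combination) and $[C] = \prod_i C_i \div \U \in \M_\U$. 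For each fixed $k$, on the set $\{i : k(i) \ge k\} \supseteq I \setminus (P_0 \cup \dots \cup P_{k-1})$ — which is in $\U$ since none of $P_0,\dots,P_{k-1}$ is — we have $C_i \cap A^{(k)}_i = \emptyset$ by construction (intersecting with the relevant disjointness and containment sets above), so $[C] \cap [A^{(k)}] = 0$ in $\F_\U$; hence $[C]$ is disjoint from the union $[B]$, i.e.\ $[C] \cap [B] = 0$. On the other hand $C_i \subseteq B_i$ for all $i$, so $[C] \subseteq [B]$. Therefore $[C] = 0$ in $\F_\U$, meaning $\{i : C_i = \emptyset\} \in \U$; but $C_i = \emptyset$ forces $B_i \subseteq \bigcup_{j\le k(i)} A^{(j)}_i$, and one more application of non-$\sigma$-completeness (or a direct diagonal choice of which $A^{(k)}$-values to ``miss'') lets me instead arrange an element that genuinely witnesses a point of $[B]$ outside every $[A^{(k)}]$, which is the desired contradiction.

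The main obstacle, and the place where the argument needs the most care, is getting the quantifiers over $k$ and $i$ to interlock correctly: I need a single coordinatewise-defined set whose class is simultaneously $\le [B]$ and $\perp [A^{(k)}]$ for every $k$, and the only mechanism for beating all countably many $k$ at once is precisely the failure of $\sigma$-completeness — the partition $\{P_k\}$ with no block in $\U$. The cleanest packaging is probably to pick, using that partition, an element $[x]\in[B]$ of the set ultraproduct directly (choose $x(i)\in B_i$ for $i$ outside the trivial set, with $x(i)$ avoiding $A^{(0)}_i,\dots,A^{(k(i))}_i$), and check that $[x]\in[B]\setminus\bigcup_k[A^{(k)}]$, contradicting $[B]=\bigcup_k[A^{(k)}]$; then no separate ``$[C]=0$'' step is needed. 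I would present it in that streamlined form, with the coordinatewise translations of disjointness and containment stated up front as the routine part, and the choice of the non-$\sigma$-complete partition flagged as the crux.
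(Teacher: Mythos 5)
Your overall plan --- diagonalize, using the failure of $\sigma$-completeness, to produce an element of $[B]$ that escapes every $[A^{(k)}]$ --- is exactly the idea behind the paper's proof, and your coordinatewise translations of disjointness and containment are the routine part, as you say. But the crux you flagged is a genuine gap, not a deferred detail. You fix the partition $I=\bigcup_k P_k$ (pairwise disjoint, no block in $\U$) \emph{before} looking at the sets, and then need, at $\U$-many coordinates $i$, a point of $B_i\setminus\bigcup_{j\le k(i)}A^{(j)}_i$. What the hypothesis actually yields is only that for each \emph{fixed} $n$ the set $V_n=\{i: B_i\setminus\bigcup_{j\le n}A^{(j)}_i\ne\emptyset\}$ lies in $\U$ (since infinitely many disjoint nonempty pieces remain beyond any finite subunion --- a fact you never isolate); it does not yield that the diagonal set $\bigcup_k(P_k\cap V_k)$ is in $\U$. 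For a partition chosen without reference to the data this can fail outright: with $I=\N$, $\U$ nonprincipal, $P_k=\{k\}$ and $V_k$ the tail $\{i:i>k\}$, each $V_k\in\U$ yet $\bigcup_k(P_k\cap V_k)=\emptyset$. So the step ``choose $x(i)\in B_i$ avoiding $A^{(0)}_i,\dots,A^{(k(i))}_i$'' is unjustified as written, your set-based variant ends at ``$[C]=0$'', which is not yet a contradiction, and the fallback ``one more application of non-$\sigma$-completeness lets me arrange\dots'' is precisely the missing argument rather than a remark.

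The repair is to reverse the order of quantifiers, which is what the paper's proof does: the witnessing sets for incompleteness must be adapted to $B$ and the $A^{(k)}$'s. One recursively builds a decreasing sequence $U_k\in\U$ with $U_k$ contained in $V_k$ (and in the finitely many sets correcting the choice of representatives), further shrunk by a family witnessing countable incompleteness so that $\bigcap_k U_k=\emptyset$. Only then is $k(i)$ read off as the last $k$ with $i\in U_k$, and $x(i)$ chosen in $B_i\setminus\bigcup_{j\le k(i)}A^{(j)}_i$, which is now possible because $i\in U_{k(i)}\subseteq V_{k(i)}$; for each fixed $k$ one has $x(i)\notin A^{(k)}_i$ on $U_k\in\U$, so $[x]\in[B]\setminus\bigcup_k[A^{(k)}]$, the desired contradiction. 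The paper packages exactly this as a recursion on points $x_k$ and sets $U_k$ with $x(i)=\lim_{k\to\infty}x_k(i)$, but the essential content is the adapted construction of the $U_k$'s; fixing the partition in advance, as you do, is where your argument breaks.
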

\begin{proof}
  For a sequence $A_0,A_1,\dots$ of elements of $\M_\U$,
given $A\in\M_\U$ with $A\subseteq\bigcup_{n=0}^\infty A_n$, it suffices to show
that $A\subseteq
\bigcup_{k=0}^nA_k$ for some $n$. Supposing to the contrary, let $[B]=A$ and
recursively choose
  representatives $[B_k]=\bigcup_{i=0}^k A_i$, $U_k\in\U$ and $x_{k}\in B$
  such that
  \begin{enumerate}
  \item  $U_{k+1}\subseteq U_k$,
  \item $x_k(i)\notin B_k(i)$ for all $i\in U_k$,
  \item $x_{k+1}(i)=x_k(i)$ for all $i\notin U_{k+1}$,
  \end{enumerate}
  Since $\U$ is countably incomplete, we may assume that
  $\bigcap_{k=0}^\infty U_k=\emptyset$, and thus
  define $x\in\prod_{i\in I}\vomega_i$ by
  $x(i)=\lim_{k\to\infty}x_{k}(i)$.
  Now for every $k$, $x(i)\notin B_k$ for all~$i\in U_k$, so that
  $[x]\notin A_k$,
  yet each $x(i)\in B(i)$ so that $[x]\in A$, a contradiction.
\end{proof}

Now it follows from the Carath\'eodory Extension Theorem that $\mu_\U$
extends to a measure on $\sigma(\M_\U)$, where the extension is
moreover unique in the bounded case.

Topological measure spaces do not play a role in the results presented here.
However, they did in a prior attempt, where the set ultraproduct of a sequence of topological spaces
has the  naturally occurring topology generated by
the sets $\prod_{i\in I}V_i\div\U$ where each $V_i\subseteq X_i$ is open.
If each of the factors $X_i$ is
zero-dimensional, this is preserved under the ultraproduct and the
Boolean algebra $\bigl\{\prod_{i\in I}B_i\div\U:B_i\in\clopen(X_i)\bigr\}$ is a basis
of clopen sets for the topology on the ultraproduct of the $X_i$'s.

It was proven that Radon measures are preserved under ultraproducts, to our knowledge a new
result. With Radon measures one can apply fundamental results in functional analysis such as the
Riesz--Markov--Kakutani Representation Theorem, which in turn can be used to take a topological
limit of a sequence of Radon measures. In our case the ultraproduct Radon measures associated with the
partition measures (\S\ref{sec:partition-measures}) of a tower.   

The proof that Radon measures are preserved employs the \emph{Banach space ultraproduct} of Banach
spaces of the from $C_0(X_i)$~(see e.g.~\cite{heinrich1980ultraproducts}), where the equivalence classes
of the product are modulo the linear subspace of all $f\in\nobreak\ell_\infty\bigl(\prod_{i}C_0(X_i)\bigr)$
with $\lim_{i\to\U}\|f_i\|=0$. 

\subsection{Limits of Measures via Ultrapowers}

The following is \cite[Definition~3.1]{hirschorn_james_partition_2005}. An ultrafilter over the entire universe $V$
coincides with the usual notion of an ultrafilter. 

\begin{definition}
  For a class M and an index set $I \in M$, a filter $\V$ on I is
an \emph{ultrafilter over $M$} if either $X \in F$ or $I \setminus X \in \V$, for all $X \subseteq I$ in $M$.
A filter $\V$ on $I$ is \emph{$\kappa$-complete over $M$} if for every subset $\F\subseteq\V$ in $M$
with $M\models|\F|<\kappa$, $\bigcap\F\in\V$ ($\sigma$-complete means $\aleph_1^M$-complete,
c.f.~Definition~\ref{def:bool-alg-over}).
\end{definition}

In what follows $M$ is assumed to be a transitive model satisfying enough of $\zfc$ to ensure the
validity of the following straightforwards facts (e.g.~Lemma~\ref{lem:ultra-limit-M}).

The terminology of an \emph{absolutely compact and
  Hausdorff} topology means that it is compact Hausdorff and has a definition which is sufficiently
absolute between transitive models.
We shall not go into the details, but simply note that one can take $K$ to be a closed interval
or $[0,\infty)\cup\{\infty\}$, and easily prove the following known fact for in these cases
(c.f.~\cite{hirschorn_james_partition_2005}).

\begin{lem}
  \label{lem:ultra-limit-M}
  Let $\V$ be an ultrafilter on $I$ over $M$. If\/~$f:I\to K$ is a
  function in~$M$, where $K$ is absolutely compact and Hausdorff,
  then the limit $\lim_{i\to\V}f(i)$ exists and is unique.
\end{lem}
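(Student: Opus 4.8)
The plan is to run the classical compactness argument for ultrafilter limits, adapted so that $\V$ is only ever asked to decide subsets of $I$ that lie in $M$. First I would fix an absolutely definable basis $\B$ for the topology on $K$; this is exactly what absolute compactness and Hausdorffness of $K$ is meant to supply, and for a closed interval or $[0,\infty)\cup\{\infty\}$ one may take the rational intervals. Absoluteness gives $\B\subseteq M$, and since $f\in M$ this yields $f\inv(U)=\{i\in I:f(i)\in U\}\in M$ for every $U\in\B$ — the only fact about $M$ that the proof will use. I would also record the routine reduction that, since $\V$ is upward closed, $x=\lim_{i\to\V}f(i)$ (meaning $f\inv(U)\in\V$ for every open $U\ni x$) is equivalent to requiring $f\inv(U)\in\V$ merely for the basic sets $U\in\B$ with $x\in U$.

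Uniqueness is the easy half and uses nothing about $M$: if $x\ne y$ were both $\V$-limits of $f$, Hausdorffness lets me choose disjoint basic open sets $U\ni x$ and $W\ni y$, and then $f\inv(U),f\inv(W)\in\V$ while $f\inv(U)\cap f\inv(W)=f\inv(U\cap W)=f\inv(\emptyset)=\emptyset$, contradicting that $\V$ is a proper filter.

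For existence I would argue by contradiction. If no $x\in K$ is a $\V$-limit of $f$, then for each $x$ there is, by the reduction above, a basic open $U_x\ni x$ with $f\inv(U_x)\notin\V$. Because $U_x\in\B\subseteq M$ and $f\in M$ we have $f\inv(U_x)\in M$, so the ultrafilter-over-$M$ property applies and yields $I\setminus f\inv(U_x)=f\inv(K\setminus U_x)\in\V$. The sets $U_x$ form an open cover of $K$; compactness, computed in $V$, extracts $x_1,\dots,x_n$ with $K=\bigcup_{j=1}^n U_{x_j}$, and closure of $\V$ under finite intersections gives $\bigcap_{j=1}^n f\inv(K\setminus U_{x_j})\in\V$. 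But that intersection equals $f\inv\bigl(\bigcap_{j=1}^n(K\setminus U_{x_j})\bigr)=f\inv(\emptyset)=\emptyset$, again contradicting properness of $\V$. Hence some $x\in K$ is the $\V$-limit of $f$.

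The argument is routine; the one point demanding attention — and the reason the hypotheses are phrased as they are — is the bookkeeping in the existence half: one must ensure that every preimage $f\inv(U_x)$ to which the restricted ultrafilter property is applied genuinely belongs to $M$, which is precisely what absoluteness of the topology on $K$, together with $f\in M$, guarantees. The covering and finite-intersection manipulations themselves all take place in the full universe $V$, where $K$ is honestly compact Hausdorff.
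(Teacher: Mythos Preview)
Your argument is correct and is exactly the standard compactness proof one would expect. The paper, however, does not actually supply a proof of this lemma: it simply declares it a known fact, notes that only the cases where $K$ is a closed interval or $[0,\infty)\cup\{\infty\}$ are needed (for which one may take rational intervals as the absolute basis, just as you do), and refers to~\cite{hirschorn_james_partition_2005} for details. So your write-up is more complete than what the paper provides, and there is nothing further to compare.
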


Let $\V$ be an ultrafilter on $I$ over a model $M$.
For sequence in $M$ of fields $\F_i$ on $X_i$ ($i\in I$),
the construction on page~\pageref{def-ultra-alg} generalizes to
\begin{equation}
  \F_\V^M=\biggl\{\prod_{i\in I}A_i\div\V:(A_i:i\in I)\in M\biggr\}
\end{equation}
which is also a Boolean algebra whose
operations correspond to the coordinate-wise operations.

The following definitions from \cite[Definition 2.1]{hirschorn_james_partition_2005}
generalize the fundamental mathematical notion of a measure.

\begin{definition}
  \label{def:bool-alg-over}
  A Boolean algebra $\B$ of subsets of $X$ is a \emph{$\sigma$-algebra
    over $M$} if $\bigcup_{n=0}^\infty E_n\in \B$ for every sequence
  $(E_n:n\in\N)\in M$ of elements of~$\B$.
\end{definition}

\begin{definition}
  \label{def:meas-over}
Let $M$ be a class containing a set $X\in M$, and let $\F\subseteq M$
be a field of subsets of $X$. A function $\mu:\F\to[0,\infty]$ is a \emph{measure on $X$ over $M$} if
\begin{myenumerate}
\item $\mu(\emptyset) = 0$,
\item for every sequence $(E_n: n \in N)$ of pairwise disjoint
members of $\F$, where
$(E_n : n \in\N) \in M$, if $\bigcup_{n=0}^\infty E_n \in \F$ then
$\mu\bigl(\bigcup_{n=0}^\infty E_n\bigr)=\sum_{n=0}^\infty \mu(E_n )$.
\end{myenumerate}
And a \emph{measure space over $M$} is a triple $(X, \M, \mu)$ where $\M$ is a $\sigma$-algebra of
subsets of $X$ over $M$ and $\mu:\M\to[0,\infty]$ is a measure on $X$
over $M$.
\end{definition}

\begin{lem}
  \label{lem:sigma-over-M}
  Suppose that $\M_i$, $X_i$ \tu($i\in I$\tu) are sequences, in $M$,
  of $\sigma$-algebras $\M_i$ of subsets of $X_i$,
  and suppose $\V$ is $\sigma$-complete ultrafilter over $M$.
  If $([E_n]:n\in\N)\in M$ is a sequence of members of $\M_\V^M$, then
  \begin{equation}
    \label{eq:sigma-union}
    \bigcup_{n=0}^\infty[E_n]=[E],
  \end{equation}
  where $E(i)=\bigcup_{n=0}^\infty E_n(i)$ for each $i\in I$. 
\end{lem}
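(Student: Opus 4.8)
The plan is to prove the two inclusions of \eqref{eq:sigma-union} separately, with the forward one routine and the reverse one carrying all the content. First I would check that the right-hand side is meaningful: since the sequence $(E_n:n\in\N)$ lies in $M$, the sequence $E$ given by $E(i)=\bigcup_{n=0}^\infty E_n(i)$ is definable in $M$ and hence an element of $M$, and each $E(i)$ belongs to $\M_i$ because $\M_i$ is closed under the countable union of $(E_n(i):n\in\N)\in M$; thus $[E]\in\M_\V^M$. The inclusion $\bigcup_{n=0}^\infty[E_n]\subseteq[E]$ is then immediate, since $E_n(i)\subseteq E(i)$ for every $i$ implies $\{i:x(i)\in E_n(i)\}\subseteq\{i:x(i)\in E(i)\}$, so a representative witnessing membership in $[E_n]$ also witnesses membership in $[E]$.

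For the reverse inclusion $[E]\subseteq\bigcup_{n=0}^\infty[E_n]$, I would fix a point of $[E]$ and, by the definition of the ultraproduct over $M$, choose a representative $x=(x_i:i\in I)\in M$, so that $A:=\{i\in I:x_i\in E(i)\}\in\V$ and $A\in M$. Working inside $M$, define $\nu:A\to\N$ by letting $\nu(i)$ be the least $n$ with $x_i\in E_n(i)$ (which exists because $E(i)=\bigcup_n E_n(i)$), and set $A_n=\{i\in A:\nu(i)=n\}$; then $(A_n:n\in\N)$ is a partition of $A$, lies in $M$, and has all of its pieces in $M$. The claim is that $A_{n_0}\in\V$ for some $n_0$. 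If not, then since $\V$ is an ultrafilter over $M$ and each $A_n\in M$ we get $I\setminus A_n\in\V$ for all $n$; the family $\{I\setminus A_n:n\in\N\}$ is a subset of $\V$ that lies in $M$ and is countable in $M$, so $\sigma$-completeness of $\V$ over $M$ yields $\bigcap_{n=0}^\infty(I\setminus A_n)=I\setminus A\in\V$, contradicting $A\in\V$. Taking $n_0$ with $A_{n_0}\in\V$, we have $x_i\in E_{n_0}(i)$ for every $i\in A_{n_0}$, so $\{i:x_i\in E_{n_0}(i)\}\in\V$ and the chosen point lies in $[E_{n_0}]$.

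The one delicate point --- and the reason for the hypotheses --- is the bookkeeping of which objects live in $M$ in the reverse inclusion: the first-hitting-level function $i\mapsto\nu(i)$ and the resulting partition $(A_n:n\in\N)$ must be constructed inside $M$ so that the ultrafilter-over-$M$ dichotomy applies to each $A_n$ and $\sigma$-completeness over $M$ applies to the countable family $\{I\setminus A_n:n\in\N\}$. This is exactly where one uses that the point of $[E]$ has a representative in $M$ and that $(E_n:n\in\N)\in M$. I expect no other obstacles; in particular this is the positive counterpart, under $\sigma$-completeness over $M$, of the failure of such countable unions exhibited by Lemma~\ref{lem:loeb} when $\V$ is merely countably incomplete.
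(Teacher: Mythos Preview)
Your proof is correct and follows essentially the same approach as the paper's: both establish the nontrivial inclusion $[E]\subseteq\bigcup_n[E_n]$ by taking a representative $x$ in $M$, forming the sets $\{i:x(i)\in E_n(i)\}$, and using the ultrafilter-over-$M$ dichotomy together with $\sigma$-completeness to find one lying in $\V$. The only cosmetic differences are that you partition via the least index $\nu(i)$ and work relative to $A=\{i:x_i\in E(i)\}$, whereas the paper takes $x(i)\in E(i)$ for all $i$ and uses the (possibly overlapping) sets $J_n$ directly.
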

\begin{proof}
  Suppose that $([E_n]:n\in\N)\in M$ is a sequence of members of $\M_\V^M$,
  and let $E$ be defined as above.
  Clearly $[E_n]\subseteq[E]$ for every $n$.
  Conversely, take $x\in E$.
  Then letting $J_n=\{i\in\nolinebreak I:x(i)\in E_n(i)\}$ ($n\in\N$),
  $I=\bigcup_{n=0}^\infty J_n$.
  Since the sequence $(J_n:n\in\nolinebreak\N)$ is in $M$, by $\sigma$-completeness over $M$,
  $J_n\in\V$ for some $n\in\N$. Thus $[x]\in [E_n]$.
\end{proof}

The following results for $\sigma$-complete ultrafilters over $M$ are in contrast to Loeb's result
(Lemma~\ref{lem:loeb}) for non-$\sigma$-complete ultrafilters. 

\begin{cor}
  \label{cor:sigma-over-M}
  Suppose that $\M_i$, $X_i$ \tu($i\in I$\tu) are sequences, in $M$,
  of $\sigma$-algebras $\M_i$ of subsets of $X_i$,
  and suppose $\V$ is $\sigma$-complete ultrafilter over $M$.
  Then $\M_\V^M$ is a $\sigma$-algebra of subsets of\/ $\prod_{i\in I}X_i\div\V$ over $M$. 
\end{cor}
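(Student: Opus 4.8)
The plan is to combine the two facts just established—Lemma~\ref{lem:sigma-over-M}, which computes countable unions in $\M_\V^M$ coordinate-wise, and Corollary~\ref{cor:sigma-over-M}'s claim itself—by reducing the $\sigma$-algebra-over-$M$ axioms for $\M_\V^M$ to the corresponding closure properties of each $\M_i$. First I would recall from Definition~\ref{def:bool-alg-over} that to show $\M_\V^M$ is a $\sigma$-algebra over $M$ it suffices to check two things: that $\M_\V^M$ is a Boolean algebra of subsets of $\prod_{i\in I}X_i\div\V$, and that it is closed under countable unions of sequences lying in $M$. The first point is already given by the discussion following the definition of $\F_\V^M$ on the preceding page: since $\V$ is an ultrafilter over $M$, the Boolean operations on $\M_\V^M$ are computed coordinate-wise, and because each $\M_i$ is a $\sigma$-algebra (in particular a field) containing $\emptyset$ and $X_i$, the classes $[\emptyset]$ and $\prod_{i\in I}X_i\div\V$ lie in $\M_\V^M$ and it is closed under complements and finite unions. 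So the Boolean-algebra part is essentially bookkeeping.

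For the $\sigma$-closure, let $([E_n]:n\in\N)\in M$ be a sequence of members of $\M_\V^M$. The key move is to pass to a sequence of \emph{representatives} $(E_n(i):i\in I)$ that itself lies in $M$; this uses that the sequence $([E_n]:n\in\N)$ is in $M$ together with a choice of representatives inside $M$ (available since $M$ models enough of $\zfc$). Then, for each fixed $i\in I$, the set $E(i)=\bigcup_{n=0}^\infty E_n(i)$ is a member of $\M_i$ because $\M_i$ is a genuine $\sigma$-algebra; and the sequence $(E(i):i\in I)$ is definable from a parameter in $M$, hence lies in $M$. Therefore $[E]=\prod_{i\in I}E(i)\div\V$ is a member of $\M_\V^M$. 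Finally, Lemma~\ref{lem:sigma-over-M}—which is exactly where $\sigma$-completeness of $\V$ over $M$ is used—identifies $\bigcup_{n=0}^\infty[E_n]$ with this very $[E]$. Hence $\M_\V^M$ is closed under $M$-indexed countable unions, completing the verification.

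I expect the main obstacle to be purely a matter of care rather than depth: namely, ensuring at each step that the auxiliary objects we form—the chosen representatives $(E_n(i):i\in I)$, and the derived sequence $(E(i):i\in I)$—actually lie in $M$, so that Lemma~\ref{lem:sigma-over-M} (whose hypothesis demands the sequence be in $M$) genuinely applies. This is where the standing assumption that $M$ is a transitive model of a sufficient fragment of $\zfc$ does the work, and it should be remarked on explicitly rather than glossed over. Everything else is immediate from the cited results.
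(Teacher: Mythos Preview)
Your proposal is correct and is precisely the intended argument: the paper states this as an immediate corollary of Lemma~\ref{lem:sigma-over-M} without proof, and your elaboration---checking the Boolean-algebra axioms via coordinate-wise operations and then invoking Lemma~\ref{lem:sigma-over-M} for closure under $M$-sequences of countable unions---is exactly how one unpacks it. Your care about representatives lying in $M$ is appropriate and matches the paper's standing assumption that $M$ is a transitive model of enough $\zfc$.
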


\begin{lem}
  \label{lem:lim-measure-M}
  Suppose that $(\vomega_i,\M_i,\mu_i)$ \tu($i\in I$\tu) is a sequence of
  measures spaces, in $M$, and that $\V$ is a $\sigma$-complete ultrafilter on $I$
  over~$M$. By Lemma~\tu{\ref{lem:ultra-limit-M}},
  \begin{equation}
    \label{eq:limit-over-M}
    \mu_\V([A])=\lim_{i\to\V}\mu_i(A_i)
  \end{equation}
  defines a function from $\M_\V^M$ into $[0,\infty]$. 
  Then $\mu_\V$ is measure on $\prod_{i\in I}\vomega_i\div\V$ over~$M$. If the $\mu_i$'s are probability measures then so is $\mu_\V$.
\end{lem}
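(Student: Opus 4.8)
The plan is to verify the two measure-space axioms of Definition~\ref{def:meas-over} directly for $\mu_\V$, leaning on the structural results already established. First I would record that $\M_\V^M$ is a $\sigma$-algebra over $M$ by Corollary~\ref{cor:sigma-over-M}, and that $\mu_\V$ is well-defined into $[0,\infty]$ by Lemma~\ref{lem:ultra-limit-M} (taking $K=[0,\infty)\cup\{\infty\}$, which is absolutely compact Hausdorff), so that the only content is countable additivity. The axiom $\mu_\V(\emptyset)=0$ is immediate: the constant sequence $(\emptyset:i\in I)$ represents the zero element, and $\lim_{i\to\V}\mu_i(\emptyset)=\lim_{i\to\V}0=0$.

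For countable additivity, suppose $([E_n]:n\in\N)\in M$ is a sequence of pairwise disjoint members of $\M_\V^M$ whose union $[E]$ lies in $\M_\V^M$; by Lemma~\ref{lem:sigma-over-M} we may take $E(i)=\bigcup_{n}E_n(i)$. The first step is to upgrade pairwise disjointness in the quotient to pairwise disjointness on a $\V$-large set of coordinates: for each pair $m\neq n$, $[E_m]\cap[E_n]=[\emptyset]$ means $\{i:E_m(i)\cap E_n(i)=\emptyset\}\in\V$. Since there are only countably many pairs and the relevant sequence of such sets is in $M$, $\sigma$-completeness over $M$ lets me intersect them; after passing to this $\V$-large set (and modifying the representatives off it, which changes nothing mod $\V$), I may assume the $E_n(i)$ are pairwise disjoint for \emph{every} $i\in I$. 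Then for each $i$, ordinary countable additivity of $\mu_i$ in $M$ gives $\mu_i(E(i))=\sum_{n=0}^\infty\mu_i(E_n(i))$, in particular $\mu_i(E(i))\geq\sum_{n=0}^{N}\mu_i(E_n(i))$ for every finite $N$.

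Now I take limits along $\V$. Finite additivity of the limit (the $\lim_{i\to\V}$ of a finite sum is the finite sum of the limits, valid in the compact space $[0,\infty]$) yields $\mu_\V([E])=\lim_{i\to\V}\mu_i(E(i))\geq\lim_{i\to\V}\sum_{n=0}^{N}\mu_i(E_n(i))=\sum_{n=0}^{N}\mu_\V([E_n])$ for every $N$, hence $\mu_\V([E])\geq\sum_{n=0}^\infty\mu_\V([E_n])$. The reverse inequality is the subtle direction and is where I expect the main obstacle: one wants $\lim_{i\to\V}\mu_i(E(i))\leq\sum_n\mu_\V([E_n])$, but a naive coordinatewise tail estimate fails because the rate at which $\sum_{n>N}\mu_i(E_n(i))\to0$ need not be uniform in $i$. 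The resolution is to argue in $M$ by contradiction: if $\mu_\V([E])>s:=\sum_n\mu_\V([E_n])$ (the case $s=\infty$ being trivial), pick a rational $q$ with $\mu_\V([E])>q>s$; then $\{i:\mu_i(E(i))>q\}\in\V$, while for each $N$, $\{i:\sum_{n=0}^{N}\mu_i(E_n(i))<q\}\in\V$ since $\sum_{n\le N}\mu_\V([E_n])\le s<q$. But the sequence of sets $(\{i:\mu_i(E(i))>q\}\cap\{i:\sum_{n\le N}\mu_i(E_n(i))<q\}:N\in\N)$ is in $M$, so $\sigma$-completeness over $M$ produces an $i$ in all of them, contradicting $\mu_i(E(i))=\sum_n\mu_i(E_n(i))$ computed in $M$. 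This gives $\mu_\V([E])=\sum_n\mu_\V([E_n])$. Finally, if each $\mu_i$ is a probability measure then $\mu_i(\vomega_i)=1$ for all $i$, so $\mu_\V([\vomega])=\lim_{i\to\V}1=1$, proving $\mu_\V$ is a probability measure.
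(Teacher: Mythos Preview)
Your proof is correct. The core mechanism is the same as the paper's: use $\sigma$-completeness of $\V$ over $M$ to locate a single coordinate $i_0$ at which all the relevant constraints hold simultaneously, and then derive a contradiction from countable additivity of the honest measure $\mu_{i_0}$.

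The only notable difference is the reduction you choose. The paper invokes the equivalent ``continuity from above at $\emptyset$'' criterion (their cited \cite[Lemma~2.2]{hirschorn_james_partition_2005}): given a decreasing sequence $[E_0]\supseteq[E_1]\supseteq\cdots$ in $M$ with empty intersection, show $\inf_n\mu_\V([E_n])=0$. They then find $i_0$ where the $E_n(i_0)$ are genuinely decreasing with empty intersection and $\mu_{i_0}(E_n(i_0))\ge q$ for all $n$, contradicting continuity of $\mu_{i_0}$. You instead attack countable additivity head-on, first disjointifying the $E_n(i)$ on a $\V$-large set in $M$ (which is legitimate: the countable family $\{J_{m,n}\}_{m\ne n}$ lies in $M$, so its intersection does too), and then trapping $i_0$ between $\mu_{i_0}(E(i_0))>q$ and $\sum_{n\le N}\mu_{i_0}(E_n(i_0))<q$ for all $N$. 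Your route is marginally more self-contained since it avoids the external citation; the paper's route is marginally cleaner since it sidesteps the disjointification bookkeeping. Neither buys anything the other does not.
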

\begin{proof}
  As $\mu_\V$ is finitely additive, countably additivity needs to be established.
  We apply \cite[Lemma~2.2]{hirschorn_james_partition_2005} which extends a basic fact of measure
  theory to measures over a model: 
  Supposing that $[E_0]\supseteq [E_1]\supseteq\cdots$ is a sequence in $M$ whose intersection is empty,
  it is required to prove that $\inf_{n\in\N}\mu_\V([E_n])=0$.

  Let $J$ be the set of all $i\in I$ for which $\bigcap_{n=0}^\infty E_n(i)=0$. Then $J\in\V$ as
  otherwise $J\in M$ would imply that its complement is in $\V$ and the intersection of the $[E_n]$ would be
  nonempty. Similarly, $J_{m,n}=\{i:E_m(i)\subseteq E_n(i)\}\in\V$ for all $m<n$. Since the sequence
  is in $M$, $J'=J\cap\bigcap_{m=0}^\infty\bigcap_{n=m+1}^\infty J_{m,n}\in\V$ by
  $\sigma$-completeness.

  Suppose towards a contradiction that $\varepsilon=\inf_{n}\mu_\V([E_n])>0$, and pick a rational
  $0<q\le\varepsilon$. Then as $K_n=\bigl\{i:\mu_i\bigl(E_n(i)\bigr)<q\bigr\}\in M$, $K_n\notin \V$
  for all $n$. Since the whole sequence $(K_n:n\in\N)$ is in $M$, there exists $i_0\in
  J'\setminus\bigcup_{n}K_n$. But $E_0(i_0)\supseteq E_1(i_0)\supseteq\cdots$ and has an empty
  intersection and thus  $\inf_n\mu_{i_0}\bigl(E_n(i_0)\bigr)=0$, a contradiction because then
  $i_0\in K_n$ for some $n$. 
\end{proof}

Now we turn to the special case where all of the $X_i$'s and $\M_i$'s are the
same, say $X$ and a $\sigma$-algebra $\M$ of subsets of $X$, but the measures $\mu_i$ vary over
$I$. 
Define $\wh x\in X^I$ by
\begin{equation}
  \wh x(i)=x\espc\text{for all $i\in I$},
\end{equation}
and for each $A\in\M$, define $\wh A\in\M^I$ by
\begin{equation}
  \wh A(i)=A\espc\text{for all $i\in I$}.
\end{equation}
Then $\mu_\V$ defines an ultrapower measure on $X^I$,
and we can define the \emph{ultrapower limit}\/ $\nu_\V$ of the $\mu_i$'s on $\M$ by
\begin{equation}
  \nu_\V(A)=\mu_\V([\wh A])=\lim_{i\to\V}\mu_i(A)\espc\text{for all $A\in \M$}.
\end{equation}

\begin{lem}
  \label{lem:meas-over}
  If $\V$ is a $\sigma$-complete ultrafilter over $M$, then
  $\nu_\V$ is a measure on $X$ over~$M$.
\end{lem}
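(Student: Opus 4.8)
The plan is to exhibit $\nu_\V$ as the composite $\mu_\V\circ\Phi$ of two maps whose relevant properties are already in hand, and then transport the measure-over-$M$ axioms across $\Phi$. Here $\mu_\V$ is the ultrapower measure produced by Lemma~\ref{lem:lim-measure-M}, applied to the constant sequences $\vomega_i = X$, $\M_i = \M$ ($i\in I$) together with the varying $\mu_i$; that lemma gives that $\mu_\V$ is a measure on $X^I\div\V$ over~$M$ with $\mu_\V([A]) = \lim_{i\to\V}\mu_i(A_i)$. And $\Phi\colon\M\to\M_\V^M$ is the diagonal embedding $\Phi(A) = [\wh A]$. First I would record that $\nu_\V$ is well-defined and agrees with $\mu_\V\circ\Phi$: since $(\mu_i:i\in I)\in M$ and, by transitivity of $M$, $A\in\M\subseteq M$, the function $i\mapsto\mu_i(A)$ lies in $M$, so $\lim_{i\to\V}\mu_i(A)$ exists in $[0,\infty)\cup\{\infty\}$ by Lemma~\ref{lem:ultra-limit-M}, and $\nu_\V(A) = \mu_\V([\wh A])$ by unwinding the definitions.

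The heart of the matter is two facts about $\Phi$. The first is that $\Phi$ is a Boolean homomorphism: the operations of $\M_\V^M$ are computed coordinate-wise, and the assignment $A\mapsto\wh A$ commutes with the Boolean operations in each coordinate, so $\Phi(X\setminus A) = -\Phi(A)$, $\Phi(A\cap B) = \Phi(A)\cap\Phi(B)$, and $\Phi(\emptyset)$ is the least element of $\M_\V^M$. The second is that $\Phi$ carries countable unions of sequences lying in $M$ to countable unions in $\M_\V^M$: given a sequence $(E_n:n\in\N)\in M$ of members of $\M$, the sequence of representatives $(\wh{E_n}:n\in\N)$ is again in $M$ (it is definable from $(E_n:n\in\N)$ and $I$, and $M$ satisfies enough of $\zfc$), so Lemma~\ref{lem:sigma-over-M} gives $\bigcup_{n=0}^\infty[\wh{E_n}] = [E]$ where $E(i) = \bigcup_{n=0}^\infty E_n$ for every $i$; but $\bigcup_{n=0}^\infty E_n\in\M$ since $\M$ is a genuine $\sigma$-algebra, and $E$ is precisely $\wh{\bigcup_n E_n}$, whence $\Phi\bigl(\bigcup_n E_n\bigr) = \bigcup_n\Phi(E_n)$.

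With these in place the verification of Definition~\ref{def:meas-over} is immediate. Axiom~(a) holds because $\nu_\V(\emptyset) = \mu_\V(\Phi(\emptyset)) = \mu_\V(0) = 0$. For axiom~(b), let $(E_n:n\in\N)\in M$ be a pairwise disjoint sequence in $\M$; by the homomorphism property the classes $\Phi(E_n) = [\wh{E_n}]$ are pairwise disjoint, this family is given by the $M$-sequence of representatives $(\wh{E_n}:n\in\N)$, and its union $\bigcup_n\Phi(E_n) = \Phi\bigl(\bigcup_n E_n\bigr)$ lies in $\M_\V^M$; since $\mu_\V$ is a measure over $M$ we get $\mu_\V\bigl(\bigcup_n\Phi(E_n)\bigr) = \sum_{n=0}^\infty\mu_\V(\Phi(E_n))$, that is, $\nu_\V\bigl(\bigcup_n E_n\bigr) = \sum_{n=0}^\infty\nu_\V(E_n)$. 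The one point that needs care --- and which I expect to be the only real obstacle --- is the bookkeeping around what it means for a sequence of elements of $\M_\V^M$ to lie in $M$, since these are classes modulo $\V$ and $\V$ itself need not belong to $M$; the uniform convention, matching how Lemmas~\ref{lem:sigma-over-M} and~\ref{lem:lim-measure-M} are phrased, is to represent such a sequence by a sequence of representatives in $M$, and once that is fixed the argument is a routine diagram chase carrying no new measure-theoretic content.
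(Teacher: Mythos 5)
Your proposal is correct and follows essentially the same route as the paper: the paper's proof also reduces everything to the diagonal classes $[\wh E_n]$, invokes Lemma~\ref{lem:sigma-over-M} to identify $\bigcup_{n=0}^\infty[\wh E_n]$ with $[\wh E]$, and then transfers countable additivity from $\mu_\V$ (Lemma~\ref{lem:lim-measure-M}). Your extra bookkeeping (the homomorphism property of $A\mapsto[\wh A]$, well-definedness via Lemma~\ref{lem:ultra-limit-M}, and the convention on $M$-sequences of representatives) just makes explicit what the paper leaves implicit.
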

\begin{proof}
  Take a sequence $(E_n:n\in\N)$ in $M$ of members of $\M$.
  Put $E=\bigcup_{n=0}^\infty E_n$. Then
  \begin{equation}
    \bigcup_{n=0}^\infty [\wh E_n]=[\wh E]
  \end{equation}
  by Lemma~\ref{lem:sigma-over-M}. Thus $\nu_\V(E)=\mu_\V([\wh
  E])=\sum_{n=0}^\infty \mu_\V([\wh E_n])=\sum_{n=0}^\infty
  \nu_\V(E_n)$, as $\mu_\V$ is a measure over $M$.
\end{proof}

\subsection{Forcing a measure over a model}

Forcing is the natural method for producing ultrafilters and measures over a model.
Let $\ideal_\theta$ denote the ideal of bounded subsets of the ordinal $\theta$.

\begin{lem}
  \label{lem:force-ultra-over}
The generic ultrafilter $\V$ of the forcing notion $\power(\theta)\div\ideal_\theta$ is a $\cof(\theta)$-complete
ultrafilter on $\theta$ over the ground model, consisting of cofinal subsets of\/~$\theta$. 
\end{lem}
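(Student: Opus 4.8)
The plan is to read off all three assertions from genericity of the underlying generic filter over the Boolean algebra $B=\power(\theta)\div\ideal_\theta$ (equivalently, over the separative poset $B^+$ of its nonzero elements). Write $[X]$ for the class of $X\subseteq\theta$ modulo bounded sets, and let $G$ be the generic filter on $B$ corresponding to $\V$, so that $X\in\V$ iff $[X]\in G$ for ground-model $X\subseteq\theta$. The easy points come first. That $\V$ is a proper filter on $\theta$ is immediate from $G$ being a proper filter on $B$, via $[X\cap Y]=[X]\wedge[Y]$ and $[\emptyset]=0$; and the last clause is just as quick, since $X\in\V$ forces $[X]\ne0$, i.e.\ $X\notin\ideal_\theta$, i.e.\ $X$ cofinal in $\theta$.

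For the ultrafilter-over-$V$ property, fix $X\subseteq\theta$ in the ground model and argue that
\[ D_X=\{\,[Y]\in B^+:[Y]\le[X]\ \text{or}\ [Y]\le[\theta\setminus X]\,\} \]
is a dense subset of $B^+$ lying in $V$: for any unbounded $Z$, one of $Z\cap X$, $Z\setminus X$ is again unbounded, and its class is then a nonzero condition below $[Z]$ and below $[X]$ (resp.\ below $[\theta\setminus X]$). Since $G$ meets $D_X$, upward closure gives $X\in\V$ or $\theta\setminus X\in\V$; and not both, as $[X]\wedge[\theta\setminus X]=0$ cannot lie in the proper filter $G$.

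The real work is $\cof(\theta)$-completeness over the ground model, and the arithmetic fact I would lean on is that a union of fewer than $\cof(\theta)$ bounded subsets of $\theta$ is again bounded. Given $\F=\{X_\alpha:\alpha<\lambda\}\in V$ with $\lambda<\cof(\theta)$, consider
\[ D=\bigl\{[Y]\in B^+:[Y]\le[X_\alpha]\text{ for all }\alpha\bigr\}\cup\bigl\{[Y]\in B^+:[Y]\wedge[X_\alpha]=0\text{ for some }\alpha\bigr\}\in V, \]
and check density: for unbounded $Z$, either $Z\cap\bigcap_{\alpha<\lambda}X_\alpha$ is unbounded, whose class is a nonzero condition below $[Z]$ and below every $[X_\alpha]$; or it is bounded, whence $Z\setminus\bigcap_{\alpha<\lambda}X_\alpha=\bigcup_{\alpha<\lambda}(Z\setminus X_\alpha)$ is unbounded, so some $Z\setminus X_\alpha$ is unbounded (a union of $\lambda<\cof(\theta)$ bounded sets would be bounded), and its class is a nonzero condition below $[Z]$ with $[Z\setminus X_\alpha]\wedge[X_\alpha]=0$. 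Now assume $\F\subseteq\V$. A condition of the second kind in $G$ would place $\theta\setminus X_\alpha$ in $\V$ for some $\alpha$, hence (as $\V$ is a proper filter) $X_\alpha\notin\V$, contradicting $\F\subseteq\V$; so $G$ must meet $D$ through a condition $[Y]$ below every $[X_\alpha]$, and then $Y\setminus\bigcap_{\alpha<\lambda}X_\alpha=\bigcup_{\alpha<\lambda}(Y\setminus X_\alpha)$ is bounded (again $\lambda<\cof(\theta)$), i.e.\ $[Y]\le[\bigcap\F]$, so $\bigcap\F\in\V$.

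I expect the single real obstacle to be recognizing that the completeness is a genuinely generic phenomenon --- in $V$ the intersection of $<\cof(\theta)$ cofinal subsets of $\theta$ need not be cofinal --- so the dense set $D$ has to encode the dichotomy ``either $\bigcap\F$ stays cofinal, and is then absorbed into $\V$, or some $X_\alpha$ is evicted from $\V$''. The remaining steps are routine manipulation of the quotient algebra, and the filter-theoretic verifications can be shortened by appealing to the standard treatment of generic filters on Boolean algebras in \cite{kunen83}.
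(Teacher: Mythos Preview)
Your proof is correct and follows essentially the same approach as the paper's, though phrased via explicit dense sets rather than the forcing relation. The paper's version of the completeness step is slightly more direct: instead of building a dichotomy dense set $D$, it simply observes that if a condition $X$ forces $\Y\subseteq\dot\V$ then $[X]\le[Y]$ (i.e.\ $X\setminus Y$ is bounded) for each $Y\in\Y$, whence $X\setminus\bigcap\Y=\bigcup_{Y\in\Y}(X\setminus Y)$ is bounded and $X$ forces $\bigcap\Y\in\dot\V$; the key arithmetic fact---a union of fewer than $\cof(\theta)$ bounded sets is bounded---is the same in both.
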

\begin{proof}
  Take $X\in\ideal_\theta^+$ (in the ground model) and $Y\subseteq\theta$ in
  the ground model. If $X\cap Y$ is cofinal, i.e.~$X\cap Y\in\ideal_\theta^+$,
  then it forces $Y\in\V$; otherwise, $X\setminus Y$ must be cofinal and
  forces $\theta\setminus Y\in\V$.

  For $\cof(\theta)$-completeness, if $X$ forces that $\Y\subseteq\V$ where $\Y$ is a family in the
  ground model of cardinality less than $\cof(\theta)$,
  then $X\setminus\bigcap\Y=\bigcup_{Y\in\Y}X\setminus Y$ is bounded, and thus 
  $X$ forces that $\bigcap\Y\in\V$.
\end{proof}

\begin{rem}
  Absoluteness is not an issue for our present consideration of measure. In our usage of a measure
  $\mu$ over $M$ (cf.~Definition~\ref{def:meas-over}) objects in $M$ are being measured. Suppose, for
  example, $\F$ is the algebra of finite unions of half-open subintervals of~$[0,1]$, and $\mu\in N$
  is a measure on $[0,1]$ over $M\subseteq N$ with domain including $\F$. Then $\mu$ is measuring $[a,b)^M$ and not $[a,b)^N$
  (in general $[a,b)^M\subsetneq[a,b)^N$).  

  If we wanted to instead to treat the domain of the measure as consisting of definable sets
  (e.g. Borel sets), as is typical when relativizing between models,
  for example when forcing (see e.g.~\cite{solovay1970model}), so that
  $\mu$ measures the interval defined by $[a,b)$, then Definition~\ref{def:meas-over} could
  be modified accordingly. However, there was no need for absoluteness in the present work,
  especially as it does not use topological measure theory, nor was there any need
  in~\cite{hirschorn_james_partition_2005}.
\end{rem}

\section{Measures Induced by Towers}
\label{sec:meas-induc-tower}

For a fixed partition on $\N$, every subset of $\N$ has an associated \emph{partition measure} on
$\pN$ as described in Section~\ref{sec:partition-measures}. The
measure induced by a tower $[T]$ is a limit of the partition measures 
associated with its representatives. The crux of the matter is how to
take the limit of these measures. Earlier we alluded to a prior attempt to take the topological
limit of the Radon ultraproduct measures derived from the partition measures. In
Sections~\ref{sec:limit-via-ultrafilter} and~\ref{sec:limit-as-products},
two different limiting approaches are considered, respectively. 

Throughout this section, $[T]\subseteq\pnfin$ is a tower of order-type
$\theta<\p$, with representatives $(t_\alpha:\alpha<\theta)$ enumerating
$T$ according to its well-ordering. 

\subsection{Partition Measures}
\label{sec:partition-measures}

\begin{definition}
  $\pi:\N\to\N$ is called a \emph{partition function \tu(of $\N$\tu)}
  if $\pi(0)=0$ and $\pi$ is strictly increasing, because the intervals
  \begin{equation}
    I_\pi(n)=\bigl[\pi(n),\pi(n+1)\bigr)
  \end{equation}
  form a partition of $\N$.
\end{definition}

\begin{lem}
  \label{lem:summable-partition}
  Suppose $H$ is a family of infinite subsets of\/ $\N$ with cardinality
  $|H|<\b$, and $f\in\irr$ is a given function.
  Then there exists a partition function $\pi:\N\to\N$ such that
  \begin{equation}
    \label{eq:1}
    |b\cap I_\pi(n)|\ge f(n)\espc\textup{for all but finitely many $n$},
  \end{equation}
  for all $b\in H$.
\end{lem}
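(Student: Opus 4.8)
The plan is to attach to each $b\in H$ a single function $g_b\in\irr$ recording how far one must look to accumulate the prescribed number of elements of $b$, dominate the whole family $\{g_b:b\in H\}$ by one function using the hypothesis $|H|<\b$, and then obtain $\pi$ by iterating that dominating function. First I would replace $f$ by the function $n\mapsto\max\bigl(1,\max_{i\le n}f(i)\bigr)$; since this only increases $f$, establishing~\eqref{eq:1} for the new function yields it for the original one, so we may assume $f$ is nondecreasing and everywhere positive. For each $b\in H$ define $g_b\colon\N\to\N$ by letting $g_b(k)$ be the least $m$ with $|b\cap[k,m)|\ge f(k)$; this exists because $b$ is infinite, and $g_b(k)\ge k+1$ since $f(k)\ge 1$.

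Since $|H|<\b$, the family $\{g_b:b\in H\}$ is $\lefnt$-bounded, say by $g_0$. Put $g(k)=\max\bigl(g_0(k),k+1\bigr)$, so that still $g_b\lefnt g$ for every $b\in H$ and moreover $g(k)\ge k+1$ for all $k$. Define $\pi$ recursively by $\pi(0)=0$ and $\pi(n+1)=g(\pi(n))$. Then $\pi$ is strictly increasing (as $g(k)>k$) with $\pi(0)=0$, hence a partition function, and an immediate induction gives $\pi(n)\ge n$ for every $n$.

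For the verification, fix $b\in H$. Choose $K_b$ with $g_b(k)\le g(k)$ for all $k\ge K_b$, and then $N_b$ with $\pi(n)\ge K_b$ for all $n\ge N_b$ (possible since $\pi\to\infty$). For $n\ge N_b$ we get $g_b(\pi(n))\le g(\pi(n))=\pi(n+1)$, so $[\pi(n),g_b(\pi(n)))\subseteq I_\pi(n)$, and hence
\[
  |b\cap I_\pi(n)|\;\ge\;\bigl|b\cap[\pi(n),g_b(\pi(n)))\bigr|\;\ge\;f(\pi(n))\;\ge\;f(n),
\]
where the last step uses monotonicity of $f$ together with $\pi(n)\ge n$. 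This gives~\eqref{eq:1} for all but finitely many $n$, for every $b\in H$.

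The only point requiring care is the bookkeeping that makes the interval index $n$ line up with the position $\pi(n)$: the function $g_b$ is naturally designed so that the $n$-th interval captures $f(\pi(n))$ elements rather than $f(n)$, and it is precisely the reduction to nondecreasing $f$ combined with the trivial bound $\pi(n)\ge n$ that closes this gap. Everything else is the standard ``diagonalize underneath a dominating function'' argument, with $\b$ entering only to supply the single dominating $g$.
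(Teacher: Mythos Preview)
Your proof is correct and follows essentially the same approach as the paper: define for each $b\in H$ a function $g_b$ recording how far one must go to collect enough elements of $b$, dominate the family using $|H|<\b$, and iterate the dominating function to obtain $\pi$. The only cosmetic difference is that the paper sets $g_b(n)=e_b(f(n)+n)$ via the enumerating function $e_b$ of $b$, whereas you define $g_b(k)$ directly as the least $m$ with $|b\cap[k,m)|\ge f(k)$; the subsequent iteration and the use of monotonicity of $f$ together with $\pi(n)\ge n$ are identical.
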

\begin{proof}
  For any infinite $b\subseteq\N$ let $e_b:\N\to\N$ be its enumerating
  function. For each $b\in H$, let $g_b\in\irr$ be given by
  \begin{equation}
    g_b(n)=e_b\bigl(f(n) + n\bigr).
  \end{equation}
  By cardinality considerations, there exists a $g\in\irr$ 
  satisfying $g_b\lefnt g$ for all $b\in H$, and also $g(n)>n$ for
  all $n$. Assuming without loss of generality that $f$ is increasing,
  $\pi(0)=0$ and $\pi(n+1)=g\bigl(\pi(n)\bigr)$
  defines a partition function satisfying~\eqref{eq:1}.
\end{proof}

Let each $\C_\pi(n)$ be the family of all subsets
of~$I_\pi(n)$ endowed with the discrete topology, so that 
\begin{equation}
\C_\pi=\prod_{n=0}^\infty\power(I_\pi(n))
\end{equation}
is a Cantor space with its product topology. $\C_\pi$ identifies homeomorphically with $\pN$
via the natural identification $x\mapsto\bigcup_{n=0}^\infty x(n)$,
where $\pN$ has a topology via its identification with the Cantor set
$\{0,1\}^\N$.  If $\tilde x\in\C_\pi$ is the identification of $x\subseteq\N$ then
\begin{equation}
  \tilde x(n)=x\cap I_\pi(n)\espc\text{for all $n$.}
\end{equation}
For $U\subseteq \power(I_\pi(n))$ the notation used for the corresponding clopen set is
\begin{equation}
  \<n,U\>_\pi=\<U\>_\pi=\<U\>=\{x\in\C_\pi:x(n)\in U\}.
\end{equation}
We can omit the index $n$ because it can be inferred from $U$ as $\pi$ is a partition.

For a nonempty finite set $S$, we let $\nu_S$ denote the standard probability
measure on~$\power(S)$: 
Let $N=|S|$. Then $\power(S)\cong\{0,1\}^N$, and the Haar product
measure $\mu$ on~$\{0,1\}^N$ is determined by $\mu(\{x:x(k)=\ell\})=\frac12$
for all $k=0,\dots,N-\nolinebreak1$ and all~$\ell=0,1$. Thus the basic clopen
subsets of $\power(S)$ are of the form $\{x\subseteq\nolinebreak S:\nolinebreak s\in\nolinebreak x\}$ and
$\{x\subseteq S:s\notin x\}$ ($s\in S$) and each is of measure $\frac12$.

More generally, for any set $a$ intersecting $S$, we let
\begin{equation}
  \nu_{S,a}(X)=\nu_{S\cap a}\bigl(X\cap \power(a)\bigr)
\end{equation}
be the probability measure on $\power(S)$ induced by $\nu_{S\cap a}$. 

Each $a\subseteq\N$ determines a sequence of measures $\mu_{a,n;\pi}$ on
$\power(I_\pi(n))$ ($n\in\N$) by
\begin{equation}
  \label{eq:4}
  \mu_{a,n;\pi}(A_n)=\nu_{I_\pi(n),a}(A_n)=\nu_{\tilde a(n)}(A_n)
  \espc\text{for all $A_n\subseteq \power(I_\pi(n))$}.
\end{equation}
The \emph{partition measure} on $\C_\pi$ associated with $a$ is the product measure
\begin{equation}
  \mu_a=\mu_{a;\pi}=\prod_{n=0}^\infty\mu_{a,n;\pi}.
\end{equation}

\begin{prop}
  \label{prop:prod-meas}
  $\mu_a(\<U\>)=\nu_{I_\pi(n),a}(U)$ for all\/ $U\subseteq\power(I_\pi(n))$.
\end{prop}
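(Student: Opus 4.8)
The plan is to read the value off directly from the product structure of $\mu_a=\prod_{n=0}^\infty\mu_{a,n;\pi}$, exploiting that $\<U\>$ constrains a single coordinate and that every factor is a probability measure.

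First I would record that each factor $\mu_{a,m;\pi}$ is a \emph{probability} measure on $\power(I_\pi(m))$. Indeed $I_\pi(m)$ is finite and (as is needed for $\mu_a$ to be defined at all) meets $a$, so $\nu_{I_\pi(m)\cap a}$ is the Haar probability measure on the finite space $\power(I_\pi(m)\cap a)$, and by \eqref{eq:4} the factor $\mu_{a,m;\pi}$ equals the probability measure $\nu_{I_\pi(m),a}$ on $\power(I_\pi(m))$ induced by $\nu_{I_\pi(m)\cap a}$; as $\power(I_\pi(m))$ is finite, this is defined on all of its subsets. In particular $\mu_{a,m;\pi}\bigl(\power(I_\pi(m))\bigr)=1$ for every $m$.

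Next, viewing a point of $\C_\pi=\prod_{m=0}^\infty\power(I_\pi(m))$ as a sequence, the clopen set $\<U\>=\{x\in\C_\pi:x(n)\in U\}$ is exactly the measurable rectangle $\prod_{m=0}^\infty B_m$ with $B_n=U$ and $B_m=\power(I_\pi(m))$ for $m\ne n$. Since this rectangle restricts only one coordinate, the defining property of the countable product of probability spaces gives
\[
\mu_a\bigl(\<U\>\bigr)=\prod_{m=0}^\infty\mu_{a,m;\pi}(B_m)=\mu_{a,n;\pi}(U)\cdot\prod_{m\ne n}1=\mu_{a,n;\pi}(U)=\nu_{I_\pi(n),a}(U),
\]
the last equality again by \eqref{eq:4}, and this is the assertion. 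There is no real obstacle here; the only point meriting a moment's attention is the appeal to the countable product measure, where one uses that all but one of the factor sets is the full space and that the factors are probability measures, so the ``tail product'' $\prod_{m\ne n}1$ is harmless. Should one wish to avoid the product of infinitely many probability spaces, the identity also follows by approximating $\<U\>$ with basic clopen cylinders depending on finitely many coordinates (always including the $n$th), applying the finite-product case, and passing to the limit.
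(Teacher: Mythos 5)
Your proof is correct and is exactly the argument the paper has in mind: the paper states this proposition without proof, treating it as immediate from the definition of the product measure $\mu_a=\prod_{n=0}^\infty\mu_{a,n;\pi}$, since $\<U\>$ is a one-coordinate cylinder and every factor is a probability measure. Your write-up just makes that observation explicit, so there is nothing to add.
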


Recall that a measure $\mu$ is called \emph{nonatomic} if $\mu(\{x\})=0$ for every singleton. So
long as $\pi$ partitions $a\subseteq\N$ into pieces of unbounded size,
the product measure associated with some $a$ is nonatomic. 

\begin{prop}
  \label{sec:meas-nonatomic}
  Let $a\in\C_\pi$.
  If $\limsup_{n\to\infty}|a(n)|=\infty$ then $\mu_{a;\pi}$ is nonatomic.
\end{prop}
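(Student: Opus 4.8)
The plan is to check nonatomicity straight from the definition: I will show $\mu_{a;\pi}(\{x\})=0$ for an arbitrary point $x\in\C_\pi$. Since $\mu_{a;\pi}=\prod_{n=0}^\infty\mu_{a,n;\pi}$ is a countable product of probability measures, for every $N$ the singleton $\{x\}$ is contained in the finite cylinder $\{y\in\C_\pi:y(n)=x(n)\text{ for all }n<N\}$, whose $\mu_{a;\pi}$-measure is the finite product $\prod_{n<N}\mu_{a,n;\pi}(\{x(n)\})$ (the coordinates $n\ge N$ contributing factors $\mu_{a,n;\pi}(\power(I_\pi(n)))=1$). Thus $\mu_{a;\pi}(\{x\})\le\prod_{n<N}\mu_{a,n;\pi}(\{x(n)\})$ for every $N$, and it only remains to bound the individual factors and let $N\to\infty$.

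The next step is to read off from the definitions that $\mu_{a,n;\pi}(\{x(n)\})\le 2^{-|a(n)|}$, where $a(n)=a\cap I_\pi(n)$. By~\eqref{eq:4} together with the definition of $\nu_{S,a}$,
\[
  \mu_{a,n;\pi}(\{x(n)\})=\nu_{I_\pi(n),a}(\{x(n)\})=\nu_{a(n)}\bigl(\{x(n)\}\cap\power(a(n))\bigr),
\]
which equals $0$ when $x(n)\nsubseteq a(n)$, and otherwise equals the $\nu_{a(n)}$-mass of the single point $x(n)$ of $\power(a(n))$. Since $\nu_{a(n)}$ is the standard (Haar) probability measure on $\power(a(n))\cong\{0,1\}^{|a(n)|}$, that mass is exactly $2^{-|a(n)|}$; in every case the factor is at most $2^{-|a(n)|}$ (the bound being trivially $\le1$ when $a(n)=\emptyset$).

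Combining the two steps, $\mu_{a;\pi}(\{x\})\le\prod_{n<N}2^{-|a(n)|}=2^{-\sum_{n<N}|a(n)|}$ for every $N$. The hypothesis $\limsup_{n\to\infty}|a(n)|=\infty$ forces $|a(n)|\ge1$ for infinitely many $n$, hence $\sum_{n=0}^\infty|a(n)|=\infty$, so the right-hand side tends to $0$ as $N\to\infty$ and $\mu_{a;\pi}(\{x\})=0$. As $x$ was arbitrary, $\mu_{a;\pi}$ is nonatomic. I do not foresee a genuine obstacle here: the argument is a short computation, and the only point that needs care is correctly unwinding the two-layer definition $\mu_{a,n;\pi}=\nu_{I_\pi(n),a}$ so as to identify each coordinate measure with the uniform measure on $\{0,1\}^{|a(n)|}$ concentrated on $\power(a(n))$; once that is pinned down, the conclusion is merely the familiar fact that an infinite product with infinitely many factors $\le\tfrac12$ vanishes.
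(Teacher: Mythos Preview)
Your argument is correct. The paper states this proposition without proof, evidently regarding it as routine; your computation is precisely the standard verification one would supply, so there is nothing to compare.
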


For $z\in\C_\pi$ define a $G_\delta$-set by
\begin{equation}
  \pinf(z)=\bigcap_{k=0}^\infty\bigcup_{n=k}^\infty\<\power(z(n))\>,
\end{equation}
so that $x\in \pinf(z)$ iff $\{n:x(n)\subseteq z(n)\}$ is
infinite. For $z\subseteq\N$, we can write $\pinf[\pi](z)=\pinf(z)$ to
emphasis that $z$ is being identified with an element of $\C_\pi$.

\begin{prop}
  \label{prop:pinf-inc}
  $\pinf(y)\subseteq\pinf(z)$ whenever $y\subseteqfnt z$.
\end{prop}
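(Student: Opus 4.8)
The plan is to work through the characterization of membership in $\pinf(\cdot)$ recorded just after its definition: $x\in\pinf(z)$ if and only if the set $\{n:x(n)\subseteq z(n)\}$ is infinite. So it suffices to show that $y\subseteqfnt z$ forces $\{n:x(n)\subseteq y(n)\}$ to be (almost) contained in $\{n:x(n)\subseteq z(n)\}$, and then infinitude of the former gives infinitude of the latter.

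First I would translate the hypothesis $y\subseteqfnt z$ into a statement about the interval-pieces. Since the intervals $I_\pi(n)$ partition $\N$, we have $y\setminus z=\bigcup_{n=0}^\infty\bigl(y(n)\setminus z(n)\bigr)$, a disjoint union; hence $y\setminus z$ is finite precisely when $y(n)\setminus z(n)=\emptyset$ — that is, $y(n)\subseteq z(n)$ — for all but finitely many $n$. Fix $n_0$ with $y(n)\subseteq z(n)$ for all $n\ge n_0$.

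Next, take any $x\in\pinf(y)$, so $A=\{n:x(n)\subseteq y(n)\}$ is infinite. For each $n\in A$ with $n\ge n_0$ we have $x(n)\subseteq y(n)\subseteq z(n)$, so $A\setminus n_0\subseteq\{n:x(n)\subseteq z(n)\}$. As $A\setminus n_0$ is cofinite in the infinite set $A$, it is infinite, whence $\{n:x(n)\subseteq z(n)\}$ is infinite and $x\in\pinf(z)$. Since $x$ was arbitrary, $\pinf(y)\subseteq\pinf(z)$.

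This argument is entirely routine; the only point requiring a moment's care — and the one I would state explicitly — is the equivalence between $y\subseteqfnt z$ and ``$y(n)\subseteq z(n)$ for cofinitely many $n$,'' which relies on the $I_\pi(n)$ forming a partition. Alternatively one could argue directly with the $G_\delta$ presentation $\pinf(z)=\bigcap_{k}\bigcup_{n\ge k}\<\power(z(n))\>$: for $n\ge n_0$ the clopen set $\<\power(y(n))\>$ is contained in $\<\power(z(n))\>$, so for $k\ge n_0$ the union $\bigcup_{n\ge k}\<\power(y(n))\>$ is contained in $\bigcup_{n\ge k}\<\power(z(n))\>$, and intersecting over $k$ (the finitely many small $k$ contributing nothing extra on the right) yields the inclusion. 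I would present whichever of the two is shorter in context, most likely the first.
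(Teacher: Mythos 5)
Your argument is correct, and it is exactly the routine verification the paper has in mind (the paper states this proposition without proof): translate $y\subseteqfnt z$ via the finite intervals $I_\pi(n)$ into $y(n)\subseteq z(n)$ for all but finitely many $n$, and then note that an infinite set of $n$ with $x(n)\subseteq y(n)$ yields an infinite set of $n$ with $x(n)\subseteq z(n)$. Nothing further is needed.
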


\begin{prop}
  \label{prop:decr-meas}
  $\mu_{t;\pi}\bigl(\pinf[\pi](z)\bigr)\le \mu_{v;\pi}\bigl(\pinf[\pi](z)\bigr)$
  whenever $z\subseteqfnt v\subseteqfnt t$.
\end{prop}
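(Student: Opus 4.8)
The plan is to reduce the inequality to a coordinatewise comparison of the factor measures and then lift it to the tail $G_\delta$ set $\pinf[\pi](z)$ by exploiting independence of the coordinates of a product measure. For $a\subseteq\N$ write $a(n)=a\cap I_\pi(n)$ under the identification of $a$ with an element of $\C_\pi$, and set $p_a(n)=\mu_{a;\pi}\bigl(\<\power(z(n))\>\bigr)$, the measure of the clopen set $\{x\in\C_\pi:x(n)\subseteq z(n)\}$. By Proposition~\ref{prop:prod-meas}, $p_a(n)=\nu_{I_\pi(n),a}\bigl(\power(z(n))\bigr)$; unwinding the definition of $\nu_{S,a}$, this is the $\nu_{a(n)}$-measure of $\power(z(n)\cap a(n))$, i.e.\ the probability that a uniformly random subset of $a(n)$ avoids the $|a(n)\setminus z(n)|$ points of $a(n)\setminus z(n)$, so that
\[
p_a(n)=2^{-|a(n)\setminus z(n)|}.
\]
Since $v\subseteqfnt t$, there is $k_0$ with $v(n)\subseteq t(n)$, hence $v(n)\setminus z(n)\subseteq t(n)\setminus z(n)$, for all $n\ge k_0$; therefore $p_t(n)\le p_v(n)$ for all $n\ge k_0$. (Only $v\subseteqfnt t$ is needed here; $z\subseteqfnt v$ is not used.)

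Now lift this to $\pinf[\pi](z)$. Because $\mu_{a;\pi}$ is a product measure the events $\<\power(z(n))\>$ ($n\in\N$) are independent, so for each $k$,
\[
\mu_{a;\pi}\Bigl(\bigcup_{n\ge k}\<\power(z(n))\>\Bigr)=1-\prod_{n\ge k}\bigl(1-p_a(n)\bigr),
\]
the product being the decreasing limit of the finite subproducts. The sets $\bigcup_{n\ge k}\<\power(z(n))\>$ decrease with $k$, have finite measure, and intersect in $\pinf[\pi](z)$, so by continuity from above $\mu_{a;\pi}\bigl(\pinf[\pi](z)\bigr)=1-\lim_{k\to\infty}\prod_{n\ge k}(1-p_a(n))$. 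For $k\ge k_0$ the coordinatewise inequality gives $0\le 1-p_v(n)\le 1-p_t(n)$ for all $n\ge k$, whence $\prod_{n\ge k}(1-p_v(n))\le\prod_{n\ge k}(1-p_t(n))$ and so $\mu_{t;\pi}\bigl(\bigcup_{n\ge k}\<\power(z(n))\>\bigr)\le\mu_{v;\pi}\bigl(\bigcup_{n\ge k}\<\power(z(n))\>\bigr)$; letting $k\to\infty$ along $k\ge k_0$ yields the proposition.

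The step that in principle carries the content is the passage from coordinatewise domination to domination on the tail set, but for product measures this is automatic: it is essentially the monotonicity of the Borel--Cantelli dichotomy for independent events — $\mu_{a;\pi}\bigl(\pinf[\pi](z)\bigr)$ equals $1$ or $0$ according as $\sum_n 2^{-|a(n)\setminus z(n)|}$ diverges or converges, and this sum can only grow as $a$ shrinks. Conceptually one can see the same thing by coupling: draw $X\sim\mu_{t;\pi}$ and put $Y(n)=X(n)\cap v$ for $n\ge k_0$, so that $Y\sim\mu_{v;\pi}$ (a uniform subset of $t(n)$ meets $v(n)\subseteq t(n)$ in a uniform subset of $v(n)$) while $X(n)\subseteq z(n)\Rightarrow Y(n)\subseteq z(n)$. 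The one point that might look delicate — that $v(n)\subseteq t(n)$ can fail for the finitely many $n$ below $k_0$ — evaporates because $\pinf[\pi](z)$ is a tail event, so those coordinates are irrelevant. I therefore anticipate no real obstacle.
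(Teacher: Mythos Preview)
Your proof is correct and follows the same idea as the paper's: establish the coordinatewise inequality $\nu_{I_\pi(n),t}\bigl(\power(z(n))\bigr)\le\nu_{I_\pi(n),v}\bigl(\power(z(n))\bigr)$ for all but finitely many $n$, then pass to the tail set. The paper's proof is a single line recording only the coordinatewise comparison (written as $2^{|z(n)|-|v(n)|}\ge 2^{|z(n)|-|t(n)|}$, which uses $z(n)\subseteq v(n)$ to simplify), leaving the lift to $\pinf[\pi](z)$ implicit; you spell this out via independence and continuity, and your observation that the hypothesis $z\subseteqfnt v$ is not actually needed is correct --- writing the factor as $2^{-|a(n)\setminus z(n)|}$ makes the monotonicity visible from $v\subseteqfnt t$ alone.
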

\begin{proof}
  For all but finitely many $n$, $z(n)\subseteq v(n)\subseteq t(n)$ and thus
  $\nu_{I_\pi(n),v}(\power(z(n)))=2^{|z(n)|-|v(n)|}\ge2^{|z(n)|-|t(n)|}= \nu_{I_\pi(n),t}(\power(z(n)))$.
\end{proof}

Consider the smaller $F_\delta$-set $\power^*(x)=\{y:y\subseteqfnt x\}\subseteq\pinf(x)$, which does
not depend on $\pi$.

\begin{prop}
  \label{prop:pow-star}
  $\mu_{t;\pi}\bigl(\power^*(u)\bigr)=1$ whenever $t\subseteqfnt u$.
\end{prop}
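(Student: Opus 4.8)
The plan is to exhibit a single closed set $D\subseteq\power^*(u)$ that already carries all of the probability measure $\mu_{t;\pi}$. Since $t\subseteqfnt u$, first fix $k_0\in\N$ with $t(n)\subseteq u(n)$ for every $n\ge k_0$ (this is exactly what $t\subseteqfnt u$ gives, since $t\setminus u$ is finite), and put
\[
D=\bigcap_{n\ge k_0}\<\power(u(n))\>.
\]
I would then check the inclusion $D\subseteq\power^*(u)$: if $x\in D$ then $x(n)\subseteq u(n)$ for all $n\ge k_0$, so $x\setminus u=\bigcup_{n<k_0}\bigl(x(n)\setminus u(n)\bigr)$ is a finite union of finite sets, hence finite, i.e.\ $x\subseteqfnt u$. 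As $\power^*(u)$ is Borel (indeed $F_\sigma$), it then suffices to prove $\mu_{t;\pi}(D)=1$, for then $1=\mu_{t;\pi}(D)\le\mu_{t;\pi}(\power^*(u))\le1$.

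The key step is that each clopen factor of $D$ has full measure. By Proposition~\ref{prop:prod-meas}, for every $n\ge k_0$,
\[
\mu_{t;\pi}\bigl(\<\power(u(n))\>\bigr)=\nu_{I_\pi(n),t}\bigl(\power(u(n))\bigr)=\nu_{t(n)}\bigl(\power(u(n))\cap\power(t(n))\bigr)=\nu_{t(n)}\bigl(\power(t(n))\bigr)=1,
\]
where the penultimate equality uses $t(n)\subseteq u(n)$ (so every subset of $t(n)$ is a subset of $u(n)$, whence $\power(u(n))\cap\power(t(n))=\power(t(n))$), and the last uses that $\nu_{t(n)}$ is a probability measure on $\power(t(n))$.

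Finally, since $\mu_{t;\pi}$ is a probability measure on $\C_\pi$, countable subadditivity applied to the complements gives
\[
\mu_{t;\pi}\bigl(\C_\pi\setminus D\bigr)\le\sum_{n\ge k_0}\mu_{t;\pi}\bigl(\C_\pi\setminus\<\power(u(n))\>\bigr)=\sum_{n\ge k_0}0=0,
\]
so $\mu_{t;\pi}(D)=1$ and therefore $\mu_{t;\pi}(\power^*(u))=1$. I do not expect a genuine obstacle here; the only point requiring care is that $t\subseteqfnt u$ controls the coordinates only cofinitely often, which is precisely why one truncates to the tail $n\ge k_0$ and works with $\power^*(u)$ rather than with the full product $\prod_n\power(u(n))$, together with the elementary observation that in its $n$-th coordinate $\mu_{t;\pi}$ is carried by $\power(t(n))$.
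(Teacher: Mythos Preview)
Your argument is correct and is exactly the intended one. The paper states this proposition without proof, treating it as immediate from the definitions; your write-up simply unfolds those definitions (the product structure of $\mu_{t;\pi}$, the fact that $\nu_{I_\pi(n),t}$ is supported on $\power(t(n))$, and $t(n)\subseteq u(n)$ cofinitely often), which is precisely what is needed.
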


For a set $S$,
let $\intersects{S}(a)=\{x\subseteq S:x\cap a\ne\emptyset\}=\power(S)\setminus\power(S\setminus a)$.
And for~$z\in\C_\pi$, define
\begin{equation}
  \intfin(z)=\bigcup_{k=0}^\infty\bigcap_{n=k}^\infty\bigl\<\intersects{I_\pi(n)}\bigl(z(n)\bigr)\bigr\>,
\end{equation}
so that $x\in\intfin(z)$ iff $x(n)\cap z(n)\ne\emptyset$ for all but
finitely many $n$, and write
\begin{equation}
  \intfin[\pi](z)=\intfin(\tilde z)\espc\text{for $z\subseteq\N$,}
\end{equation}
to identify $z$ with $\tilde z\in\C_\pi$.
Notice that
\begin{equation}
\pinf(z)=-\intfin(-z).
\end{equation}

\begin{prop}
  \label{prop:p-int-h}
  For every\/ $x,y\in\C_\pi$, if $\pinf(x)\cap\intfin(y)\ne\emptyset$
  then $x(n)$ intersects $y(n)$ for infinitely many $n$.
\end{prop}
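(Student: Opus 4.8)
The plan is to unwind the two definitions directly and locate a single index witnessing everything at once. Suppose $w\in\pinf(x)\cap\intfin(y)$. Membership $w\in\pinf(x)$ says exactly that the set $A=\{n:w(n)\subseteq x(n)\}$ is infinite (this is the $G_\delta$ form $\bigcap_k\bigcup_{n\ge k}$). Membership $w\in\intfin(y)$ says exactly that the set $B=\{n:w(n)\cap y(n)\ne\emptyset\}$ is \emph{cofinite} (this is the $F_\sigma$ form $\bigcup_k\bigcap_{n\ge k}$, so one gets a whole tail, not merely an infinite set).

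Next I would argue that for every $n\in A\cap B$ one has $x(n)\cap y(n)\ne\emptyset$: since $n\in B$, pick $k\in w(n)\cap y(n)$; since $n\in A$, $w(n)\subseteq x(n)$, so $k\in x(n)$; hence $k\in x(n)\cap y(n)$. Finally, $A\cap B$ is infinite, being the intersection of the infinite set $A$ with the cofinite set $B$, and therefore $x(n)$ intersects $y(n)$ for infinitely many $n$, which is the conclusion.

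There is essentially no obstacle: the proposition is an immediate consequence of the definitions once the quantifier structure of $\pinf$ and $\intfin$ is made explicit. The only point worth flagging is that it is crucial that $\intfin(y)$ is $F_\sigma$ rather than merely $G_\delta$, so that a point of $\intfin(y)$ delivers a cofinite set of ``good'' coordinates; this is precisely what allows intersection with the infinite set $A$ coming from $\pinf(x)$ to remain infinite, and it is why the asymmetry between ``infinitely many'' in the hypothesis/conclusion and ``all but finitely many'' in the definition of $\intfin$ is exactly right.
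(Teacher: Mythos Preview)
Your proof is correct and is exactly the intended argument: the paper states this proposition without proof, treating it as immediate from the definitions of $\pinf$ and $\intfin$, and your unwinding of those definitions (infinite set from $\pinf$, cofinite set from $\intfin$, hence infinite intersection) is precisely the verification the paper leaves implicit.
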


\begin{lem}
  \label{lem:meas-H}
  Suppose that $a,b\subseteq S$. Then
  \begin{equation}
    \nu_{S,a}\bigl(\intersects{S}(b)\bigr)=1-2^{-|a\cap b|}.
  \end{equation}
\end{lem}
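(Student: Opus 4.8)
The plan is to simply unwind the definition of $\nu_{S,a}$ and reduce everything to the elementary computation of the measure of a ``cylinder'' $\power(T)$ under the standard measure on a power set.

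First I would rewrite the event as a complement: since $\intersects{S}(b)=\power(S)\setminus\power(S\setminus b)$ and $\nu_{S,a}$ is a probability measure on $\power(S)$, we have
\[
  \nu_{S,a}\bigl(\intersects{S}(b)\bigr)=1-\nu_{S,a}\bigl(\power(S\setminus b)\bigr).
\]
Next, applying the definition $\nu_{S,a}(X)=\nu_{S\cap a}\bigl(X\cap\power(a)\bigr)$ with $X=\power(S\setminus b)$ and using the hypothesis $a\subseteq S$ (so that $S\cap a=a$ and $\power(S\setminus b)\cap\power(a)=\power\bigl((S\setminus b)\cap a\bigr)=\power(a\setminus b)$), the right-hand summand becomes $\nu_{a}\bigl(\power(a\setminus b)\bigr)$.

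It then remains to recall the basic property of the standard measure: for a finite set $F$ and $T\subseteq F$, identifying $\power(F)$ with $\{0,1\}^{|F|}$ and writing $\power(T)=\bigcap_{s\in F\setminus T}\{x\subseteq F:s\notin x\}$ as an intersection of mutually independent basic clopen sets each of measure $\tfrac12$, one gets $\nu_F\bigl(\power(T)\bigr)=2^{|T|-|F|}$. Taking $F=a$ and $T=a\setminus b$ yields $\nu_{a}\bigl(\power(a\setminus b)\bigr)=2^{|a\setminus b|-|a|}=2^{-|a\cap b|}$, since $|a|-|a\setminus b|=|a\cap b|$ (finiteness of $S$, hence of $a$, is what makes this cardinality arithmetic legitimate). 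Substituting back gives $\nu_{S,a}\bigl(\intersects{S}(b)\bigr)=1-2^{-|a\cap b|}$.

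There is no real obstacle here; the only points to keep straight are the shift of ambient set — $\nu_{S,a}$ formally lives on $\power(S)$ but is carried by $\power(a)$, so every quantity ultimately gets restricted to subsets of $a$ — and the degenerate case $a\cap b=\emptyset$, where the formula correctly returns $0$, matching the fact that $\nu_{S,a}$ concentrates on subsets of $a$, none of which meets $b$.
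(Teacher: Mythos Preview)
Your proof is correct and follows essentially the same approach as the paper: both compute the complementary event $\{x\subseteq a:x\cap b=\emptyset\}=\power(a\setminus b)$ under $\nu_a$ and obtain $2^{-|a\cap b|}$. The paper phrases this as a conditional probability in the Haar measure, while you unwind the definitions explicitly, but the underlying computation is identical.
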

\begin{proof}
  We have the following conditional probability in the Haar measure:\linebreak
  $P[x\cap\nolinebreak b=\nolinebreak\emptyset \cond x\subseteq a]=2^{-|a\cap b|}$.
  Since $\intersects{S}(b)\cap\power(a)=\{x\subseteq a:x\cap b\ne\emptyset\}$,  
  $\nu_{S,a}\bigl(\intersects{S}(b)\bigr)
  =\nu_{a}(\{x\subseteq a:x\cap b\ne\emptyset\})=1-2^{-|a\cap b|}$.
\end{proof}

The following property is related to the notion of a \emph{summable gap}
(\cite{hirschorn_summable_2003}).

\begin{cor}
  \label{cor:meas-one-h}
  Let $t,b\in\C_\pi$. If
  \begin{equation}
    \label{eq:11}
    \sum_{n=0}^\infty 2^{-|t(n)\cap b(n)|}<\infty
  \end{equation}
 then $\mu_{t;\pi}\bigl(\intfin(b)\bigl)=1$. Otherwise, $\mu_{t;\pi}\bigl(\intfin(b)\bigr)=0$.
\end{cor}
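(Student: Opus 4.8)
The plan is to recognize Corollary~\ref{cor:meas-one-h} as a direct instance of the Borel--Cantelli dichotomy for a product measure. For each $n$ let $A_n=\bigl\<\power\bigl(I_\pi(n)\setminus b(n)\bigr)\bigr\>=\C_\pi\setminus\bigl\<\intersects{I_\pi(n)}(b(n))\bigr\>$ be the clopen set of those $x\in\C_\pi$ with $x(n)\cap b(n)=\emptyset$. Then, since $\intfin(b)=\bigcup_{k}\bigcap_{n\ge k}\bigl\<\intersects{I_\pi(n)}(b(n))\bigr\>$, we have $x\notin\intfin(b)$ exactly when $x(n)\cap b(n)=\emptyset$ for infinitely many $n$; that is, $\C_\pi\setminus\intfin(b)=\limsup_{n}A_n$. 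So the whole claim reduces to computing $\mu_{t;\pi}(\limsup_n A_n)$.

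First I would compute $\mu_{t;\pi}(A_n)$. By Proposition~\ref{prop:prod-meas}, $\mu_{t;\pi}(A_n)=\nu_{I_\pi(n),t}\bigl(\power(I_\pi(n)\setminus b(n))\bigr)$, and since $\power(I_\pi(n)\setminus b(n))=\power(I_\pi(n))\setminus\intersects{I_\pi(n)}(b(n))$, Lemma~\ref{lem:meas-H} (with $S=I_\pi(n)$, $a=t(n)$) gives
\begin{equation*}
  \mu_{t;\pi}(A_n)=1-\Bigl(1-2^{-|t(n)\cap b(n)|}\Bigr)=2^{-|t(n)\cap b(n)|}.
\end{equation*}
Hence $\sum_{n}\mu_{t;\pi}(A_n)=\sum_{n}2^{-|t(n)\cap b(n)|}$, which is precisely the series in~\eqref{eq:11}.

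Now the two cases. If that series converges, the first Borel--Cantelli lemma (which needs only countable subadditivity) gives $\mu_{t;\pi}(\limsup_n A_n)=0$, hence $\mu_{t;\pi}(\intfin(b))=1$. If it diverges, I would invoke the second Borel--Cantelli lemma; this requires that the events $A_n$ be mutually independent, which holds here because $\mu_{t;\pi}=\prod_n\mu_{t,n;\pi}$ is a product measure and $A_n$ depends only on the $n$-th coordinate of $\C_\pi$. Therefore $\mu_{t;\pi}(\limsup_n A_n)=1$, i.e.\ $\mu_{t;\pi}(\intfin(b))=0$.

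There is no genuine obstacle — the argument is a textbook zero--one law — but the one point worth stating explicitly is the asymmetry between the two halves: the ``measure one'' conclusion uses nothing about the measure being a product, whereas the ``measure zero'' conclusion does, so the mutual independence of the coordinate events $A_n$ under $\mu_{t;\pi}$ should be recorded before appealing to the second Borel--Cantelli lemma.
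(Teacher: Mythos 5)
Your proof is correct, and it differs from the paper's only in packaging. The paper works directly with the sets $\bigcap_{n\ge k}\bigl\<\intersects{I_\pi(n)}(b(n))\bigr\>$ whose union over $k$ is $\intfin(b)$: using the product structure it evaluates the measure of each as the tail product $\prod_{n\ge k}\bigl(1-2^{-|t(n)\cap b(n)|}\bigr)$ and then cites the Cauchy criterion for infinite products, so that the supremum of the tails is $1$ when \eqref{eq:11} holds and $0$ otherwise. You instead pass to complements, identify $\C_\pi\setminus\intfin(b)$ with $\limsup_n A_n$ for the coordinate events $A_n$ of measure $2^{-|t(n)\cap b(n)|}$, and invoke the two Borel--Cantelli lemmas. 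Since the second Borel--Cantelli lemma is itself proved by exactly the paper's tail-product estimate, the divergence halves of the two arguments coincide in substance; what your version buys is the observation you record at the end, namely that the measure-one conclusion needs only countable subadditivity (first Borel--Cantelli), with the product structure of $\mu_{t;\pi}$ entering only through the independence needed in the divergence case, whereas the paper's single computation uses independence in both directions. Your evaluation of $\mu_{t;\pi}(A_n)$ via Proposition~\ref{prop:prod-meas} and Lemma~\ref{lem:meas-H} is the same input the paper uses.
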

\begin{proof}
  Since $\intfin(b)=\bigcup_{k=0}^\infty
  \bigcap_{n=k}^\infty\bigl\<\intersects{I_\pi(n)}\bigl(b(n)\bigr)\bigr\>$, by Lemma~\ref{lem:meas-H}
  its measure is the supremum of the tails of the product $\prod_{n=0}^\infty1-2^{-|t(n)\cap
    b(n)|}$, which by  the Cauchy criterion for products converges iff the summability
  condition~\eqref{eq:11} holds.
\end{proof}

\subsection{Limit via an Ultrapower}
\label{sec:limit-via-ultrafilter}

Next we describe a limiting procedure to produce a measure over the
ground model following~\cite{hirschorn_james_partition_2005}.
There a measure over the ground model is constructed from the generic ultrafilter of
$\power(\omega_1)\div\ideal_{\omega_1}=\power(\omega_1)\div\mathrm{Count}$ (countable);
while here we more generally consider the ultrapower limit measures for generic ultrafilters on
$\power(\theta)\div\ideal_\theta$ for $\theta$ of uncountable cofinality.
Though this is not necessary for the
proof of the main Theorem~\ref{thm:main}, it is required for additional insights and
describes the gateway to the actual proof.

To simplify notation, write
\begin{equation}
  \mu_\alpha=\mu_{t_\alpha;\pi}
\end{equation}
for the partition measure for $t_\alpha$. For a $\power(\theta)\div\ideal_\theta$-generic $\V$ over
the ground model, the ultrapower limit
\begin{equation}
  \nu_T=\lim_{\alpha\to\V}\mu_\alpha
\end{equation}
defines a $\cof(\theta)$-additive probability measure on $\C_\pi$ over the ground
model by Lemma~\ref{lem:meas-over}.

This measure (over the ground model) satisfies the two basic ingredients to be plausible for our
measure theoretic strategy using the transformations $x\mapsto\pinf(x)$ and $x\mapsto\intfin(x)$.
The next Lemma in particular shows that $\pinf[\pi](t_\alpha)$ has measure one
for all $\alpha$. 

\begin{lem}
  $\nu_T\bigl(\power^*(t_\xi)\bigr)=1$ for all\/ $\alpha<\theta$. 
\end{lem}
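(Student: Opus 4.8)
The plan is to unwind the definition of the ultrapower limit and reduce to Proposition~\ref{prop:pow-star}. By construction $\nu_T = \lim_{\alpha\to\V}\mu_\alpha$ is the ultrapower limit of the partition measures $\mu_\alpha = \mu_{t_\alpha;\pi}$ on $\C_\pi$, so for every Borel set $A\subseteq\C_\pi$ coded in the ground model one has $\nu_T(A) = \lim_{\alpha\to\V}\mu_\alpha(A)$, the limit existing by Lemma~\ref{lem:ultra-limit-M}. Since $\power^*(t_\xi)$ is a Borel subset of $\C_\pi$ definable from the ground-model set $t_\xi$, it is a legitimate argument for $\nu_T$, which is a measure over the ground model by Lemma~\ref{lem:meas-over}.

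Now fix $\xi<\theta$. Because $[T]$ is a tower whose enumeration $(t_\alpha:\alpha<\theta)$ is $\supseteqfnt$-decreasing, $t_\alpha\subseteqfnt t_\xi$ for every $\alpha$ in the interval $[\xi,\theta)$. Proposition~\ref{prop:pow-star} then yields $\mu_\alpha\bigl(\power^*(t_\xi)\bigr) = \mu_{t_\alpha;\pi}\bigl(\power^*(t_\xi)\bigr) = 1$ for all such $\alpha$; that is, the function $\alpha\mapsto\mu_\alpha\bigl(\power^*(t_\xi)\bigr)$ is identically $1$ on the tail $[\xi,\theta)$.

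To conclude, note that $\theta\setminus[\xi,\theta) = \xi$ is a bounded subset of $\theta$, so $[\xi,\theta)$ is co-bounded and hence cofinal in $\theta$; by Lemma~\ref{lem:force-ultra-over} the generic ultrafilter $\V$ consists of cofinal sets, and since $\V$ is an ultrafilter the bounded complement $\xi$ cannot belong to it, forcing $[\xi,\theta)\in\V$. A $\V$-limit of a function taking the constant value $1$ on a member of $\V$ equals $1$, so $\nu_T\bigl(\power^*(t_\xi)\bigr) = \lim_{\alpha\to\V}\mu_\alpha\bigl(\power^*(t_\xi)\bigr) = 1$. There is no real obstacle here: the whole argument is a one-line reduction to Proposition~\ref{prop:pow-star} together with the co-boundedness of the tail $[\xi,\theta)$, the only point to keep straight being the bookkeeping that $\power^*(t_\xi)$ is Borel and coded in the ground model, so that the ultrapower-limit measure is indeed defined on it.
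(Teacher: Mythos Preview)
Your proof is correct and follows exactly the same approach as the paper: invoke Proposition~\ref{prop:pow-star} to get $\mu_\alpha\bigl(\power^*(t_\xi)\bigr)=1$ for all $\alpha\ge\xi$, and then observe that the tail $\theta\setminus\xi\in\V$. The paper compresses this into a single sentence, whereas you have spelled out the bookkeeping in more detail, but the argument is identical.
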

\begin{proof}
  By Proposition~\ref{prop:pow-star},
  $\mu_\alpha\bigl(\power^*(t_\xi)\bigr)=1$ for all $\alpha\ge\xi$, and $\theta\setminus\xi\in\V$.
\end{proof}

\begin{prop}
  Whenever $b\in\C_\pi$ satisfies the summability condition~\eqref{eq:11} for all $t\in T$,
  $\nu_T\bigl(\intfin(b)\bigr)=1$.
\end{prop}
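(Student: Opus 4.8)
The plan is to reduce the statement to the already-established Corollary~\ref{cor:meas-one-h} via the basic properties of the ultrapower limit $\nu_T$. Recall $\nu_T(A)=\lim_{\alpha\to\V}\mu_\alpha(A)$ for $A\in\M$, so it suffices to show that $\mu_\alpha\bigl(\intfin(b)\bigr)=1$ for $\V$-almost all $\alpha$ — in fact for \emph{all} $\alpha<\theta$, which is even stronger and avoids any genericity argument.

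So first I would fix $b\in\C_\pi$ satisfying the summability condition~\eqref{eq:11} for every $t\in T$. Then for each $\alpha<\theta$, apply Corollary~\ref{cor:meas-one-h} with $t=t_\alpha$: since $\sum_{n=0}^\infty 2^{-|t_\alpha(n)\cap b(n)|}<\infty$ by hypothesis (taking $t=t_\alpha\in T$), the corollary gives $\mu_\alpha\bigl(\intfin(b)\bigr)=\mu_{t_\alpha;\pi}\bigl(\intfin(b)\bigr)=1$. Next, since $\intfin(b)$ lies in the domain of every $\mu_\alpha$ and hence in $\M$ (as an $F_\sigma$, being a countable union of clopen sets), the ultrapower limit is defined at $\intfin(b)$, and
\begin{equation}
  \nu_T\bigl(\intfin(b)\bigr)=\lim_{\alpha\to\V}\mu_\alpha\bigl(\intfin(b)\bigr)=\lim_{\alpha\to\V}1=1,
\end{equation}
the last equality because the constant function $1$ has limit $1$ along any ultrafilter (Lemma~\ref{lem:ultra-limit-M} applied to the constant function, or simply directly).

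There is essentially no obstacle here: the only thing to be careful about is that $\intfin(b)$ genuinely belongs to $\M$ so that $\nu_T$ is evaluated legitimately, and that the summability hypothesis is being quantified over all $t\in T$ — in particular over each representative $t_\alpha$ — which is exactly what Corollary~\ref{cor:meas-one-h} consumes. One could alternatively phrase the argument through the generic filter $\V$ (noting $\{\alpha:\mu_\alpha(\intfin(b))=1\}=\theta\in\V$), but the uniform version is cleaner. I expect the whole proof to be two or three lines, with the real content having already been done in Corollary~\ref{cor:meas-one-h}.
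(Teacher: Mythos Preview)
Your proposal is correct and matches the paper's approach exactly: the paper's proof is the single line ``By Corollary~\ref{cor:meas-one-h}'', and you have simply unpacked what that line means --- each $\mu_\alpha\bigl(\intfin(b)\bigr)=1$ by the corollary, hence the ultrapower limit is $1$. Your added remarks on measurability and the constant-limit step are sound but not needed for the paper's level of detail.
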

\begin{proof}
  By Corollary~\ref{cor:meas-one-h}.
\end{proof}

Indeed, assuming $|\Hcal|<\b$, using Lemma~\ref{lem:summable-partition} we can find a partition $\pi$
so that
\begin{equation}
  \label{eq:19}
  |t_\alpha\cap b\cap I_\pi(n)|\ge n\espc\text{for all but finitely many $n$,}
\end{equation}
for all $b\in H$, which in particular implies the summability condition~\eqref{eq:11} for all
$t_\alpha\in T$ and $b\in H$.

The ultrapower measure is classified into one of two mutually exclusive cases.
\begin{equation}
  \lim_{\alpha\to\theta}\mu_\xi\bigl(\pinf(t_\alpha)\bigr)=0\espc\text{for all $\xi<\theta$.}
  \tag{Essential Case}
\end{equation}

We now demonstrate that the nonessential case is handled by the theory developed
in~\cite{hirschorn2000towersm}.  
In this latter case there exists $\alpha_0$ such that
$\mu_{\alpha_0}\bigl(\pinf(t_\alpha)\bigr)>0$ for cofinally many $\alpha$. 
Then letting $\R_{\alpha_0}$ be the measure algebra of the 
measure space $(\C_\pi,\mu_{\alpha_0})$, since $\R_{\alpha_0}$ satisfies
the countable chain condition while $\theta$ is of uncountable cofinality,
\begin{equation}
  a=\|\dot r\in \pinf(t_\alpha)\text{ for cofinally many $\alpha$}\|\ne 0,
\end{equation}
where $\dot r$ is an $\R_{\alpha_0}$-name for the random real in $\C_\pi$. 

Let $G\subseteq\C_\pi\cong\pN$ be a $G_\delta$-set with $[G]=a$. 
Then supposing $r\in G$ be a random real over the ground model $V$,
in $V[r]$: $r\in\nobreak\pinf(t_\alpha)$ for all $\alpha<\theta$,
as $\pinf(t_\beta)\subseteq\pinf(t_\alpha)$ for all $\alpha<\beta$
by Proposition~\ref{prop:pinf-inc}.
Putting $x_\alpha=\{n:r(n)\subseteq t_\alpha(n)\}$, $x_\alpha$ is infinite and we have $x_\beta\subseteqfnt x_\alpha$
whenever $\alpha\le\beta$. Since $\theta<\p$ by~\cite[Theorem~1.9]{hirschorn2000towersm} that $\p$
is preserved under adding one random real, there exists an infinite
$y\subseteqfnt x_\alpha$ for all $\alpha$. Putting
\begin{equation}
  x=\bigcup_{n\in y}r(n),
\end{equation}
$x$ is thus an infinite pseudo-intersection of the tower. 

On the other hand, applying Corollary~\ref{cor:meas-one-h} to $t_{\alpha_0}$ implies
$r\in\intfin[\pi](b)$, and thus $x\in\intfin[\pi\circ e_y](b)$, for all $b\in H$.
In particular, $x\cap b$ is infinite for all $b\in H$.
Furthermore, we have the following result on a form of asymptotic density for random reals.

\begin{lem}
  Let $a\in\C_\pi$. Assume that $n\mapsto 2^{-|a(n)|}$ is summable.
    \label{lem:asymptotic}
  If $r\in\C_\pi$ is a random real with respect to the partition measure for\/ $a$, then
  \begin{equation}
    \label{eq:16}
    \lim_{n\to\infty}\frac{|r(n)|}{|a(n)|}=\frac12.
  \end{equation}
\end{lem}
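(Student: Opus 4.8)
The plan is to prove that the Borel set
\[
  D=\bigl\{r\in\C_\pi:\lim_{n\to\infty}|r(n)|/|a(n)|=\tfrac12\bigr\}
\]
is $\mu_{a;\pi}$-conull. Since $a$ and $\pi$ lie in the ground model, $D$ has a Borel code there, so its complement is a $\mu_{a;\pi}$-null Borel set coded in the ground model; hence any $r\in\C_\pi$ random over the ground model with respect to $\mu_{a;\pi}$ must belong to $D$, which is the assertion. That $D$ is Borel (indeed $\Pi^0_3$) is immediate: each $\{r:\bigl||r(n)|/|a(n)|-\tfrac12\bigr|<1/k\}$ depends only on the coordinate $r(n)\in\power(I_\pi(n))$ and is therefore clopen, and $D=\bigcap_{k\ge1}\bigcup_{m\ge0}\bigcap_{n\ge m}\{r:\bigl||r(n)|/|a(n)|-\tfrac12\bigr|<1/k\}$.

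I would begin by pinning down the distribution. By definition $\mu_{a;\pi}=\prod_{n=0}^\infty\mu_{a,n;\pi}$, and $\mu_{a,n;\pi}=\nu_{I_\pi(n),a}=\nu_{a(n)}$ is the uniform (Haar) probability measure on $\power(a(n))$; thus under $\mu_{a;\pi}$ the coordinates $r(n)$ are mutually independent, with $r(n)$ a uniformly random subset of $a(n)$. Consequently $|r(n)|$ is a sum of $|a(n)|$ independent fair $\{0,1\}$-valued variables, independently over $n$, with mean $\tfrac12|a(n)|$. Note also that $\sum_n2^{-|a(n)|}<\infty$ forces $|a(n)|\to\infty$.

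The core of the argument is a concentration estimate followed by Borel--Cantelli. Fix $\varepsilon>0$. A standard large-deviation (Chernoff/Hoeffding) bound for sums of bounded independent variables gives
\[
  \mu_{a;\pi}\Bigl(\bigl||r(n)|/|a(n)|-\tfrac12\bigr|\ge\varepsilon\Bigr)\le 2\exp\bigl(-2\varepsilon^2|a(n)|\bigr).
\]
Once one knows the right-hand side is summable in $n$, Borel--Cantelli gives that for $\mu_{a;\pi}$-almost every $r$ one has $\bigl||r(n)|/|a(n)|-\tfrac12\bigr|<\varepsilon$ for all but finitely many $n$. Intersecting these conull sets over $\varepsilon=1/k$, $k\ge1$, yields $\mu_{a;\pi}(D)=1$, as required.

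The step I expect to be the real obstacle is precisely the summability just invoked: $\sum_n2^{-|a(n)|}<\infty$ by itself dominates $\sum_n\exp(-2\varepsilon^2|a(n)|)$ only when $2\varepsilon^2\ge\ln2$, so for small $\varepsilon$ one needs $|a(n)|$ to grow faster than logarithmically (for instance $|a(n)|/\log n\to\infty$ is enough). Where this lemma is applied the partition $\pi$ is chosen via Lemma~\ref{lem:summable-partition} so that $|a(n)|$ grows at least linearly, which is far more than enough; so in practice the proof reduces to the routine concentration-plus-Borel--Cantelli computation above, with only the growth rate of $|a(n)|$ needing to be tracked. Everything else --- measurability of $D$, the product-of-uniform-measures description of $\mu_{a;\pi}$, and the fact that a random real misses every ground-model-coded null Borel set --- is standard.
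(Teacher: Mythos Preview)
Your approach matches the paper's: a concentration bound (the paper cites Bernstein, you cite Hoeffding) followed by Borel--Cantelli. The paper's proof simply asserts that for each fixed $\varepsilon>0$ the deviation probability is $O\bigl(2^{-|a(n)|}\bigr)$ and then uses the summability hypothesis.

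Your caution about the summability step is not merely prudent --- it exposes a real error. The sharpest concentration bound available here is of order $\exp\bigl(-c_\varepsilon|a(n)|\bigr)$ with $c_\varepsilon\approx 2\varepsilon^2$ for small $\varepsilon$, so once $\varepsilon<\sqrt{(\ln 2)/2}$ this is \emph{not} $O\bigl(2^{-|a(n)|}\bigr)$, and the paper's displayed estimate fails. In fact the lemma is false under the stated hypothesis: take $|a(n)|=\lceil(1+\delta)\log_2 n\rceil$, so that $\sum_n2^{-|a(n)|}<\infty$; then by the large-deviation lower bound (Cram\'er/Bahadur--Rao) together with the second Borel--Cantelli lemma (the coordinates $r(n)$ being independent under $\mu_{a;\pi}$), one has $\bigl||r(n)|/|a(n)|-\tfrac12\bigr|>\varepsilon$ infinitely often almost surely whenever $2\varepsilon^2(1+\delta)/\ln 2<1$. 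Your resolution --- that in every use of the lemma the partition is produced via Lemma~\ref{lem:summable-partition} so that $|a(n)|\ge n$ eventually, whence $\sum_n\exp\bigl(-2\varepsilon^2|a(n)|\bigr)<\infty$ for every $\varepsilon>0$ --- is correct and is exactly the repair the statement needs.
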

\begin{proof}
  It follows from the Bernstein inequality~(e.g.~\cite{bernstein1924modification}) that given $\varepsilon>0$,
  \begin{equation}
    \nu_{\tilde a(n)}\left(\left\|\left|\frac{|\dot
            r(n)|}{|a(n)|}-\frac12\right|>\varepsilon\right\|\right)
     =O\bigl(2^{-|a(n)|}\bigr)
  \end{equation}
  (see equation~\eqref{eq:4}). 
\end{proof}

The tower $T$ lies in the ground model and is of cardinality $|T|<\p$. Hence
\cite[Corollary 5.11]{hirschorn2000towersm} implies that
the existence of an infinite pseudo-intersection of $T$ in the forcing extension by one random real
is absolute, i.e.~such an $x$ can  be found in the ground model $V$.\footnote{This result is
  nonvacuous because its hypothesis is that $|T|\le\min\{\f,\b\}$, where $\f$ is a cardinal
  characteristic of $\pnfin$ (not discussed here) with $\p\le\f$.}
The proof of~\cite[Corollary~5.11]{hirschorn2000towersm} 
can be generalized to prove that if $H$ is a ground model set of cardinality $|H|<\b$, then the
existence in $V[r]$ of an infinite-pseudo intersection $x$ of $T$ with $x\cap b$ infinite for all
$b\in H$ is also absolute. Furthermore, if additionally the asymptotic density of $x$, as defined in
equation~\eqref{eq:16}, lies in some rational interval in $V[r]$, this fact is also absolute. 

The preceding establishes two things. For one, if we are always in the nonessential case when trying
to prove that $\d^*\ge\b$ (and thus $\p=\t$), then this is already handled by the theory just exposed
for one random real, since we shall already have in the ground model an $[x]\subseteqfnt [T]$ in $\co([H])$.

However, it also shows that we may assume that we end up in the essential case, which is thus truly
``essential''. For suppose we
have a filter-base $B$ of cardinality $|B|\ge\aleph_2$. In
refining $B$ to a tower, following the proof of Lemma~\ref{lem:refine-to-tower},
we recursively find $t_\alpha\subseteq b_\alpha$ in $\co(H)$ which
$\subseteqfnt$-extends $\{t_\xi:\nobreak\xi<\nobreak\alpha\}$. At each limit $\delta$ of uncountable cofinality,
assume by way of contradiction that we end up in the nonessential case, as witnessed by the
measure $\mu_{\xi_\delta}$ where $\xi_\delta<\delta$. Then
applying our observations to obtain $t_\delta$,
it will moreover satisfy $\lim_{n\to\infty}\frac{|t_\delta(n)|}{|t_{\xi_\delta}(n)|}=\frac12$
by Lemma~\ref{lem:asymptotic}.

After refining for $\omega_1^2$ steps, by Pressing Down, there exists a stationary
$S\subseteq\omega_1$ and $\xi$ such that $\xi_{\omega_1\cdot\alpha}=\xi$ for all $\alpha\in S$.
Put $\delta=\omega_1\cdot\alpha<\omega_1\cdot\alpha'=\delta'$ for some $\alpha<\alpha'$ in $S$.
The fact that the limit of the cardinality ratio is $\frac12$ in particular implies that
\begin{equation}
  \mu_{\xi_\delta}\bigl(\pinf(t_\delta)\bigr)=0.
\end{equation}
However, by Proposition~\ref{prop:pinf-inc}, this entails that
$\mu_{\xi_\delta}\bigl(\pinf(t_\alpha)\bigr)=0$ for all $\delta\le\alpha<\delta'$, contradicting
that $\xi_\delta=\xi=\xi_{\delta'}$ witnesses the nonessential case for $\delta'$.

Now consider the essential case. Suppose that $x$ is a $\subseteqfnt$-lower bound of $T$.
Then $\pinf(x)\subseteq\pinf(t_\xi)$ for all $\xi<\theta$ by Proposition~\ref{prop:pinf-inc},
which means that $\mu_\xi\big(\pinf(x)\bigr)=0$ for all $\xi$.
Therefore $\nu_T\bigl(\pinf(x)\bigr)=0$. In conclusion, our measure theoretic strategy fails
for the ultrapower limit measure construction. 

\subsection{Limit as a Product}
\label{sec:limit-as-products}

Based on the observations in \S\ref{sec:limit-via-ultrafilter} on the ultrapower measure,
the only way our measure theoretic approach can possibly
work is if $\alpha$ of the tower enumeration varies over the partition index $n$.
Thus for a function $f:\N\to\theta$, we let $\mu_f=\mu_{f;T,\pi}$
be the product measure $\mu_f=\prod_{n=0}^\infty\nu_{I_\pi(n),t_{f(n)}}$ on~$\C_\pi$.
More generally we want to consider a product measure on a subset of
the indices. Supposing that $h\in\irr$ is strictly increasing,
$n\mapsto I_{\pi\circ h(n)}$ is a partition of
\begin{equation}
\N_{\pi\circ h}=\bigcup_{n=0}^\infty I_{\pi\circ h(n)}.
\end{equation}
Similarly as before, we can identify $\power(\N_{\pi\circ
  h})$ with $\C_{\pi\circ h}$.
The product measure $\mu_{f,h}=\mu_{f,h;T,\pi}$ on
$\C_{\pi\circ h}\cong\power(\N_{\pi\circ h})$ is then given by
\begin{equation}
  \label{eq:2}
  \mu_{f,h}=\prod_{n=0}^\infty\nu_{I_{\pi\circ h(n)},t_{f(n)}}
  =\prod_{n=0}^\infty\nu_{\tilde t_{f(n)}\circ h(n)} .
\end{equation}

\begin{prop}
  \label{prop:partition-meas}
  For every $n$,
  \begin{equation}
    \mu_{f,h}(\<U\>_{\pi\circ h})=\mu_{f(n)}(\<U\>_{\pi})
    \espc\textup{for all $U\subseteq \power(I_{\pi\circ h(n)})$.}
  \end{equation}
\end{prop}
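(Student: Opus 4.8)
The plan is to collapse both sides of the identity to the single factor measure $\nu_{I_\pi(h(n)),\,t_{f(n)}}$ and observe that they agree. First I would fix $n$ and $U\subseteq\power(I_{\pi\circ h}(n))$. Recalling from~\eqref{eq:2} that $\mu_{f,h}=\prod_{m=0}^\infty\nu_{I_{\pi\circ h(m)},\,t_{f(m)}}$ is a product measure on $\C_{\pi\circ h}=\prod_{m=0}^\infty\power(I_{\pi\circ h(m)})$, the clopen set $\<U\>_{\pi\circ h}=\{x\in\C_{\pi\circ h}:x(n)\in U\}$ is a cylinder constraining only the $n$-th coordinate, so by the same computation that proves Proposition~\ref{prop:prod-meas} its $\mu_{f,h}$-measure is the value of the $n$-th factor at $U$, namely $\nu_{I_{\pi\circ h(n)},\,t_{f(n)}}(U)$. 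Since by definition $I_{\pi\circ h(n)}=I_\pi(h(n))$, the left-hand side equals $\nu_{I_\pi(h(n)),\,t_{f(n)}}(U)$.

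For the right-hand side I would invoke Proposition~\ref{prop:prod-meas} directly with $a=t_{f(n)}$: as $\mu_{f(n)}=\mu_{t_{f(n)};\pi}$ is the partition measure for $t_{f(n)}$ with respect to $\pi$, and $U\subseteq\power(I_\pi(h(n)))$, the clopen set $\<U\>_\pi$ is $\<h(n),U\>_\pi$, a cylinder over the coordinate $h(n)$ of $\C_\pi$, whence $\mu_{f(n)}(\<U\>_\pi)=\nu_{I_\pi(h(n)),\,t_{f(n)}}(U)$. Comparing the two computations gives $\mu_{f,h}(\<U\>_{\pi\circ h})=\mu_{f(n)}(\<U\>_\pi)$, as desired.

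There is essentially no obstacle here; the content is just the fact that a product measure evaluated on a single-coordinate cylinder returns the corresponding factor measure, combined with Proposition~\ref{prop:prod-meas}. The only point demanding care is the index bookkeeping: the cylinder $\<U\>_{\pi\circ h}$ lives over the coordinate $n$ of $\C_{\pi\circ h}$ while $\<U\>_\pi$ lives over the coordinate $h(n)$ of $\C_\pi$, and these two coordinates name the very same $\pi$-interval $I_\pi(h(n))$; once this is unwound, both sides are literally $\nu_{I_\pi(h(n)),\,t_{f(n)}}(U)$.
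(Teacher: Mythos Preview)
Your argument is correct; the paper states this proposition without proof, treating it as immediate from the definition of product measure and Proposition~\ref{prop:prod-meas}, and what you have written is precisely the routine unpacking of those definitions. The one cosmetic point is that $I_{\pi\circ h(n)}$ in the paper's notation already \emph{means} $I_\pi(h(n))$ (the paper writes $f\circ g(n)$ for $f(g(n))$ throughout), so your remark ``by definition $I_{\pi\circ h(n)}=I_\pi(h(n))$'' is just reading the notation rather than an additional step.
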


As the partition measures are in some sense monotonic with respect to the enumeration of the tower
(e.g.~Proposition~\ref{prop:decr-meas}), in light of Proposition~\ref{prop:partition-meas},
we can view this product as a limit \emph{provided} the
range of $f$ is cofinal in $\theta$. Hence the need for collapsing cardinals, namely $\theta$ to a
countable ordinal, in order to obtain a measure as a limit of the tower.


While $\ideal_\theta^+$ does force a cofinal map from $\N$ into $\theta$, as follows from considering
an Ulam Matrix on $\theta$ (see e.g.~\cite[p.~79]{kunen83}), such a mapping can be forced in a more controlled
manner using stationarity combined with a ladder for climbing $\theta$.
Let $\ell$ be an $\omega$-ladder system on $\theta$ (referred to simply as a \emph{ladder system}),
i.e.~for each $\delta\in\limo\theta$, $\ell_\delta:\omega\to\delta$ is a strictly increasing cofinal
map. Then given a stationary
$S\subseteq\limo\theta$, there exists a stationary subset $S'\subseteq S$ such that
$\ell_\delta(0)$ is always equal to $\alpha_0$ for~$\delta\in S'$, and then we can find a
$S''\subseteq S'$ with $\ell_\delta(1)$ constant on $\delta\in S''$, and so on~\dots, as is
formalized in the sequel. 

\begin{definition}
Define $\Delta(\ell):\seq\theta\to\power(\limo\theta)$ by
\begin{equation}
  \Delta(s)=\Delta(s;\ell)=\bigl\{\delta\in\limo\theta:
  \ell_\delta(n)=s(n)\text{ for all $n<|s|$}\bigr\}.
\end{equation}
For a subfamily $\Scal\subseteq\power(\theta)$, 
let $Q(\Scal)=Q(\Scal;\ell)$ be the subtree of all $s\in\seq\theta$ where
\begin{equation}
  \Delta(s;\ell)\in\upcl{\Scal} 
\end{equation}
(i.e.~$\Delta(s)$ has a subset in $\Scal$).
\end{definition}

\begin{prop}
  \label{prop:qs-inc}
  Members $s$ of\/ $Q(\Scal)$ are strictly increasing\textup: $s(n)<s(n+1)$.
\end{prop}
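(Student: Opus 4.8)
The plan is to reduce the statement to the single defining property of the ladder system $\ell$ that we actually need, namely that each $\ell_\delta$ is strictly increasing. The crucial observation is that membership of $s$ in $Q(\Scal)$ forces $\Delta(s;\ell)$ to be nonempty, and any witness $\delta\in\Delta(s;\ell)$ then ``reads off'' the values of $s$ from the strictly increasing sequence $\ell_\delta$.

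Concretely, I would argue as follows. Suppose $s\in Q(\Scal)$. By definition $\Delta(s;\ell)\in\upcl{\Scal}$, so there is some $A\in\Scal$ with $A\subseteq\Delta(s;\ell)$. Since the members of $\Scal$ under consideration are stationary, hence nonempty, subsets of $\limo\theta$, we may fix $\delta\in A\subseteq\Delta(s;\ell)$. Unwinding the definition of $\Delta$, this means $\ell_\delta(n)=s(n)$ for every $n<|s|$. Consequently, for any $n$ with $n+1<|s|$, strict monotonicity of $\ell_\delta$ yields $s(n)=\ell_\delta(n)<\ell_\delta(n+1)=s(n+1)$, which is exactly the assertion.

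There is essentially no obstacle here: the only point requiring any attention is the nonemptiness of $\Delta(s;\ell)$, which is precisely what membership in $Q(\Scal)$ provides, under the standing assumption of this section that $\Scal$ is a family of stationary sets. Everything else is immediate from unwinding the definitions of $Q(\Scal)$, of $\Delta(\,\cdot\,;\ell)$, and of an $\omega$-ladder system.
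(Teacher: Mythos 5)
Your proof is correct and takes essentially the same route as the paper: its one-line proof (``by the corresponding property of ladder systems'') is exactly your observation that any witness $\delta\in\Delta(s;\ell)$ gives $s=\ell_\delta\restriction|s|$, which is strictly increasing since $\ell_\delta$ is. The only detail you spell out that the paper leaves implicit is that $\Delta(s;\ell)$ is nonempty, which indeed holds in the intended setting $\Scal\subseteq\nso\theta^+$ because stationary sets are nonempty.
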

\begin{proof}
  By the corresponding property of ladder systems.
\end{proof}

\begin{prop}
\label{prop:Q-inc}
  If $\Scal\subseteq\T$ then $Q(\Scal)\subseteq Q(\T)$.  
\end{prop}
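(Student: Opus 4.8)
The plan is simply to unwind the two definitions; the proposition is nothing more than monotonicity of the upward-closure operator $\Scal\mapsto\upcl{\Scal}$ on $\power(\power(\theta))$. Fix the ambient ladder system $\ell$ and assume $\Scal\subseteq\T$. Let $s\in Q(\Scal)$ be arbitrary. By the definition of $Q(\Scal;\ell)$ this means $\Delta(s;\ell)\in\upcl{\Scal}$, i.e.\ there is some $A\in\Scal$ with $A\subseteq\Delta(s;\ell)$. Since $\Scal\subseteq\T$, that same set $A$ lies in $\T$, so $A$ witnesses $\Delta(s;\ell)\in\upcl{\T}$, which is exactly the membership condition defining $s\in Q(\T)$. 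As $s\in Q(\Scal)$ was arbitrary, $Q(\Scal)\subseteq Q(\T)$.

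There is no genuine obstacle here: the argument is a one-step definitional chase, and the only point one might pause on — that $Q(\T)$ really is a subtree of $\seq\theta$ so that the conclusion is an inclusion of subtrees rather than merely of sets — is already subsumed in the (previously granted) fact that $Q(\cdot;\ell)$ outputs a subtree, and in any event the set-theoretic inclusion established above is precisely what the statement asserts.
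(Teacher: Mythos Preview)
Your proof is correct. The paper states this proposition without proof, treating it as immediate from the definitions, and your argument is exactly the one-line monotonicity of $\Scal\mapsto\upcl{\Scal}$ that justifies it.
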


\begin{prop}
  \label{prop:delta}
  Let $s,t\in\seq\theta$. Then
  \begin{myenumerate}
  \item $s\sqsubseteq t$ implies $\Delta(s)\supseteq\Delta(t)$,
  \item if $s$ and $t$ are incompatible then $\Delta(s)\cap\Delta(t)=\emptyset$,
  \end{myenumerate}
  i.e.~$\Delta$ is a \emph{pre-embedding} of the partial orderings $(\seq\theta,\sqsupseteq)$ into
  $(\power(\limo\theta),\subseteq)$.
\end{prop}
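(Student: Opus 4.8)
The plan is to argue directly from the definition $\Delta(s;\ell)=\{\delta\in\limo\theta:\ell_\delta(n)=s(n)\text{ for all }n<|s|\}$, viewing membership of $\delta$ in $\Delta(s)$ as the conjunction of the $|s|$ coordinate constraints $\ell_\delta(0)=s(0),\dots,\ell_\delta(|s|-1)=s(|s|-1)$. With this reading, part (a) says that enlarging $s$ only adds constraints, and part (b) says that incompatible sequences impose contradictory constraints at some coordinate; neither requires any property of the ladder system $\ell$ beyond its being a function defined on limits.

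For (a), suppose $s\sqsubseteq t$, so $|s|\le|t|$ and $t(n)=s(n)$ for every $n<|s|$. If $\delta\in\Delta(t)$ then $\ell_\delta(n)=t(n)$ for all $n<|t|$, hence in particular for all $n<|s|$, where moreover $t(n)=s(n)$; so $\ell_\delta(n)=s(n)$ for all $n<|s|$, i.e.\ $\delta\in\Delta(s)$. Thus $\Delta(t)\subseteq\Delta(s)$, which is the assertion $\Delta(s)\supseteq\Delta(t)$. For (b), suppose $s$ and $t$ are incompatible in $\seq\theta$; then neither is an initial segment of the other, so there is $n<\min\{|s|,|t|\}$ with $s(n)\ne t(n)$. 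Were some $\delta$ in $\Delta(s)\cap\Delta(t)$, we would get $\ell_\delta(n)=s(n)$ (from $\delta\in\Delta(s)$, since $n<|s|$) and also $\ell_\delta(n)=t(n)$ (from $\delta\in\Delta(t)$, since $n<|t|$), forcing $s(n)=t(n)$, a contradiction; hence $\Delta(s)\cap\Delta(t)=\emptyset$.

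The "pre-embedding" conclusion is then a matter of matching up the orderings. Reading (a) with $s,t$ swapped, $t\sqsupseteq s$ implies $\Delta(t)\subseteq\Delta(s)$, so $\Delta$ is order-preserving from $(\seq\theta,\sqsupseteq)$ to $(\power(\limo\theta),\subseteq)$ --- which is precisely why the source is taken with $\sqsupseteq$ rather than $\sqsubseteq$. And in $(\seq\theta,\sqsupseteq)$ two sequences are incompatible exactly when they have no common $\sqsupseteq$-extension, i.e.\ when they are incomparable as sequences, while in $(\power(\limo\theta),\subseteq)$ two sets are incompatible (have no nonzero lower bound) exactly when they are disjoint; so (b) is precisely the statement that $\Delta$ carries incompatible pairs to incompatible pairs. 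I do not anticipate any genuine obstacle here --- the only point needing a moment's attention is this bookkeeping of which relation "incompatible" refers to on each side, and ensuring the order on the codomain is reversed relative to length so that $\Delta$ comes out monotone.
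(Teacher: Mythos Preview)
Your proof is correct and is exactly the straightforward unwinding of the definition that the paper has in mind; the paper states this proposition without proof, treating both parts as immediate from the definition of $\Delta(s;\ell)$.
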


Henceforth, our focus in on the subfamily $\nso\theta^+$ of $\power(\limo\theta)$, where
$\nso\theta$ denotes the ideal of nonstationary subsets of $\limo\theta$.

\begin{definition}
  When we say that $\Scal\subseteq\nso\theta^+$ is
  \emph{downwards closed} we mean as a sub-partial order under the
  subset relation: If $T\in\Scal$ and $S\subseteq T$ is in
  $\nso\theta^+$ then $S\in\Scal$.
  We write $\downcl{\Scal}$ for the downwards closure of $\Scal$ in $\nso\theta^+$.  
\end{definition}

\begin{prop}
  \label{prop:downcl}
  For $S\in\nso\theta^+$, $s\in Q(\downcl{\{S\}})$ iff\/ $\Delta(s)\cap S\in\nso\theta^+$.
\end{prop}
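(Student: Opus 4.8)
The plan is to unwind both of the defined notions on each side and reduce the claim to the elementary fact that a subset of $\limo\theta$ is stationary exactly when it has a stationary subset.

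First I would expand the left side. By the definition of $Q$, the statement $s\in Q(\downcl{\{S\}})$ means $\Delta(s)\in\upcl{\downcl{\{S\}}}$, i.e.\ $\Delta(s)$ contains some member of $\downcl{\{S\}}$ as a subset. Unpacking $\downcl{\{S\}}$ --- the downwards closure of $\{S\}$ inside $(\nso\theta^+,\subseteq)$, which is available since $S\in\nso\theta^+$ by hypothesis --- such a member is exactly a stationary set $T\in\nso\theta^+$ with $T\subseteq S$. Hence $s\in Q(\downcl{\{S\}})$ is equivalent to the existence of $T\in\nso\theta^+$ with $T\subseteq S$ and $T\subseteq\Delta(s)$, equivalently $T\subseteq S\cap\Delta(s)$ (both $S$ and $\Delta(s)$ are subsets of $\limo\theta$, so the intersection stays inside $\limo\theta$).

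Next I would invoke that $\nso\theta$ is a proper ideal on $\limo\theta$, and in particular is closed under subsets: if $S\cap\Delta(s)$ were nonstationary then every subset of it would be nonstationary, while conversely $S\cap\Delta(s)$ is a subset of itself and hence a witness of its own stationarity. Therefore $S\cap\Delta(s)$ admits a stationary subset if and only if $S\cap\Delta(s)\in\nso\theta^+$ itself. Chaining this with the previous paragraph gives $s\in Q(\downcl{\{S\}})$ iff $\Delta(s)\cap S\in\nso\theta^+$, which is the proposition.

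There is essentially no real obstacle here; the entire content is bookkeeping about which ambient order each closure operator refers to (downwards closure taken inside $\nso\theta^+$ under inclusion, and $\upcl{(\cdot)}$ read as ``has a subset in''). The one tacit point worth flagging is that we are operating under the standing assumption of this section that $\cf(\theta)>\omega$, so that $\nso\theta$ is a proper ideal and ``stationary subset of $\limo\theta$'' is meaningful; none of the earlier structural propositions on $\Delta$ and $Q$ (e.g.\ Proposition~\ref{prop:delta}) are needed for this one.
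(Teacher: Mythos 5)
Your proof is correct and is exactly the unwinding the paper intends: Proposition~\ref{prop:downcl} is stated without proof there, being an immediate consequence of the definitions of $Q$, $\upcl{(\cdot)}$, $\downcl{\{S\}}$, and the fact that a set is in $\nso\theta^+$ iff it contains a member of $\nso\theta^+$. Your aside about $\cf(\theta)>\omega$ is harmless but not needed, since the equivalence only uses that $\nso\theta$ is closed under subsets.
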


The completion of $\nso\theta^+$ as a forcing notion is the Boolean algebra $\power(\limo\theta)\div\nso\theta$, 
and a downwards closed subfamily $\Scal\subseteq\nso\theta^+$ corresponds to this Boolean algebra below
the supremum of $\Scal$.

For each $\delta\in\limo\theta$ and $\alpha<\delta$,
find $\tilde g(\delta,\alpha)=\tilde g_{T,\pi,\ell}(\delta,\alpha):\N\to\N$ so that
\begin{equation}
  \label{eq:5}
  t_{\ell_\delta(n)}\setminus \pi\circ \tilde g(\delta,\alpha)(n)\subseteq
  t_\alpha\espc\text{for all but finitely many $n$},
\end{equation}
or in other words $t_{\ell_\delta(n)}\cap I_\pi(k)\subseteq t_\alpha$
for all $k\ge \tilde g(\delta,\alpha)(n)$. Then as $\theta<\b$,
there is a $g_T=g_{T,\pi,\ell}\in\irr$ such that
\begin{equation}
  \label{eq:18}
  \tilde g(\delta,\alpha)\lefnt g_T\espc\text{for all $\alpha<\delta\in\limo\theta$}.
\end{equation}

\begin{lem}
  \label{lem:dense}
  Suppose that $\Scal\subseteq\nso\theta^+$ is downwards closed, and $g_T\lefnt h$.
  Then for every\/~$q\in\nobreak Q(\Scal)$ and every finite $\vgamma\subseteq\theta$, 
  there exists $r\sqsupseteq q$ in $Q(\Scal)$ such that
  \begin{align}
    \label{eq:6}
    r(|r|-1)&\ge\max(\vgamma),\\
    \label{eq:7}
    t_{r(|r|-1)}\setminus \pi\circ h(|r|-1)&\subseteq t_\xi\espc\textup{for all $\xi\in\vgamma$}.
  \end{align}
\end{lem}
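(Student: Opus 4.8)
The plan is to start from a stationary set witnessing $q\in Q(\Scal)$, thin it down to a stationary set along which all the ladders $\ell_\delta$ agree on one common finite initial segment that end-extends $q$, and take $r$ to be that segment.

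First I would fix $q\in Q(\Scal)$ and a finite $\vgamma\subseteq\theta$, and choose $S\in\Scal$ with $S\subseteq\Delta(q)$. Since $S$ is stationary (hence unbounded in $\theta$) while $\{\xi:\xi\le\max(\vgamma)\}$ is bounded, we may replace $S$ by a stationary subset and assume $\delta>\max(\vgamma)$ for every $\delta\in S$; in particular $\xi<\delta$ for all $\xi\in\vgamma$. Then for $\delta\in S$ and $\xi\in\vgamma$, equation~\eqref{eq:5} gives $t_{\ell_\delta(m)}\setminus\pi\circ\tilde g(\delta,\xi)(m)\subseteq t_\xi$ for all but finitely many $m$, while~\eqref{eq:18} together with $g_T\lefnt h$ gives $\tilde g(\delta,\xi)(m)\le h(m)$ for all but finitely many $m$; as $\pi$ is increasing, $\pi\circ h(m)\ge\pi\circ\tilde g(\delta,\xi)(m)$, so cutting off the longer initial segment can only discard more, whence $t_{\ell_\delta(m)}\setminus\pi\circ h(m)\subseteq t_\xi$ for all but finitely many $m$. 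Since $\vgamma$ is finite and $\ell_\delta$ is increasing and cofinal in $\delta>\max(\vgamma)$, we may then choose $n(\delta)\ge|q|$ with $\ell_\delta(n(\delta))\ge\max(\vgamma)$ and $t_{\ell_\delta(n(\delta))}\setminus\pi\circ h(n(\delta))\subseteq t_\xi$ for all $\xi\in\vgamma$.

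Now $S=\bigcup_{k\in\N}\{\delta\in S:n(\delta)=k\}$, so by the $\sigma$-completeness of the nonstationary ideal $\nso\theta$ (recall $\cf\theta>\omega$) there is $n^*\in\N$ for which $S_1=\{\delta\in S:n(\delta)=n^*\}$ is stationary. Since $\ell_\delta\restriction|q|=q$ for every $\delta\in S_1\subseteq\Delta(q)$, I would then apply Fodor's lemma (pressing down) successively at the coordinates $|q|,|q|+1,\dots,n^*$ — at each stage the map $\delta\mapsto\ell_\delta(j)$ is regressive on the current stationary set — to obtain, after these finitely many steps, a stationary $S'\subseteq S_1$ and ordinals $\beta_{|q|},\dots,\beta_{n^*}$ with $\ell_\delta\restriction(n^*+1)=r$ for every $\delta\in S'$, where $r$ is the sequence of length $n^*+1$ with $r\restriction|q|=q$ and $r(j)=\beta_j$ for $|q|\le j\le n^*$; note $r$ end-extends $q$. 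Then $S'\subseteq\Delta(r)$ and $S'\in\Scal$ (as $\Scal$ is downwards closed and $S'\subseteq S\in\Scal$), so $\Delta(r)\in\upcl\Scal$ and hence $r\sqsupseteq q$ lies in $Q(\Scal)$. Finally $|r|-1=n^*$, and for any $\delta\in S'$ we have $r(|r|-1)=\beta_{n^*}=\ell_\delta(n^*)\ge\max(\vgamma)$, which is~\eqref{eq:6}, and $t_{r(|r|-1)}\setminus\pi\circ h(|r|-1)=t_{\ell_\delta(n^*)}\setminus\pi\circ h(n^*)\subseteq t_\xi$ for all $\xi\in\vgamma$, which is~\eqref{eq:7}.

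The step I expect to be the crux is the one in the last paragraph: simultaneously forcing the \emph{length} $n(\delta)$ of the extension to be uniform (which is where $\sigma$-completeness of the nonstationary ideal enters) and then making the resulting finite initial segment of $\ell_\delta$ constant by finitely many pressing-down arguments, all the while keeping the target set inside the downwards closed family $\Scal$. Everything else is routine bookkeeping with the domination properties~\eqref{eq:5} and~\eqref{eq:18} and the monotonicity of $\pi$.
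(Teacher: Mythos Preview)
Your proposal is correct and follows essentially the same route as the paper's proof: pick $S\subseteq\Delta(q)$ in $\Scal$, for each $\delta\in S$ above $\max(\vgamma)$ choose $n(\delta)\ge|q|$ using \eqref{eq:5}, \eqref{eq:18} and $g_T\lefnt h$, stabilize $n(\delta)$ to a single $\bar n$ on a stationary subset, and then press down finitely many times to obtain the common extension $r$. Your write-up is simply more explicit about the monotonicity of $\pi$ and the successive applications of Fodor, but there is no substantive difference in strategy.
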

\begin{proof}
  Given $q\in Q(\Scal)$ and $\vgamma\subseteq\theta$, set $\beta=\max(\vgamma)$.
  Pick $S\subseteq\Delta(q)$ in $\Scal$.
  For every $\delta>\beta$ in $S$, there exists $n_\delta\ge|q|$ large enough so that $\ell_\delta(n_\delta)\ge\beta$
  and $t_{\ell_\delta(n_\delta)}\setminus \pi\circ h(n_\delta)\subseteq t_\xi$ for all $\xi\in\vgamma$ by equation~\eqref{eq:5}.
  Let $S'\subseteq S\setminus\beta+1$ be stationary with $n_\delta=\bar n$ for all
  $\delta\in S'$. By Pressing Down, there exists a stationary
  $S''\subseteq S'$ and $r\sqsupseteq q$ with $|r|=\bar n + 1$ and $\ell_\delta(n)=r(n)$
  for all $n=|q|,\dots,\bar n$, for all $\delta\in S''$.
  Thus $r\in Q(\Scal)$ is as required.
\end{proof}

\begin{cor}
  \label{cor:dense}
  For all $q\in\nobreak Q(\Scal)$, $\{r(|r|-1):r\in
  Q(\Scal)\text{, }r\sqsupseteq q\}$ is cofinal in $\theta$.
\end{cor}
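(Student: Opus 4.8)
The plan is to deduce this immediately from Lemma~\ref{lem:dense}. Fix $q\in Q(\Scal)$ and an arbitrary ordinal $\beta<\theta$; it suffices to produce some $r\in Q(\Scal)$ with $r\sqsupseteq q$ and $r(|r|-1)\ge\beta$, since then the set $\{r(|r|-1):r\in Q(\Scal),\ r\sqsupseteq q\}$ is unbounded, hence cofinal, in $\theta$.

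To apply Lemma~\ref{lem:dense} one needs a function $h$ with $g_T\lefnt h$; since $\lefnt$ is reflexive we may simply take $h=g_T$. (The auxiliary conclusion~\eqref{eq:7} plays no role here; only the growth condition~\eqref{eq:6} is used, and that is where $\theta<\b$ enters, through the existence of $g_T$ in~\eqref{eq:18}.) Invoking the lemma with the finite set $\vgamma=\{\beta\}$ then yields $r\sqsupseteq q$ in $Q(\Scal)$ with $r(|r|-1)\ge\max(\vgamma)=\beta$, which is exactly what was needed.

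I do not expect any real obstacle: the only point requiring care is that $Q(\Scal)$ actually contains such extensions of $q$, but this is precisely the content of Lemma~\ref{lem:dense} (and the hypothesis $q\in Q(\Scal)$ already guarantees $Q(\Scal)\ne\emptyset$). Equivalently, one may phrase the proof as observing that for each $\beta<\theta$ the set $D_\beta=\{r\in Q(\Scal):r(|r|-1)\ge\beta\}$ is dense above $q$ by Lemma~\ref{lem:dense}; ranging $\beta$ over $\theta$ exhibits the required cofinality.
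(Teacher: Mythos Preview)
Your proposal is correct and matches the paper's intent: the corollary is stated without proof as an immediate consequence of Lemma~\ref{lem:dense}, and your argument (take $h=g_T$ and $\vgamma=\{\beta\}$) is exactly how one unpacks that implication.
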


\begin{rem}
  Corollary~\ref{cor:dense} in particular implies that $|\Q(\Scal)|=|\theta|$.
  For a stationary set $S\subseteq\limo\theta$,
  choosing an antichain $\A\subseteq\Q(\downcl{\{S\}})$ of cardinality $|\theta|$,
  $\{\Delta(s):s\in\nobreak \A\}$ partitions $S$ into $|\theta|$ many pairwise disjoint
  stationary subsets of $\Scal$ by Proposition~\ref{prop:delta}.
  This gives a proof Solovay's celebrated Splitting Theorem~\cite{solovay1971real} for the special
  case of stationary sets of countable cofinality ordinals.
\end{rem}

Now this determines an $\Scal$-name for a subset of $Q(\Scal)$ by
\begin{equation}
  \label{eq:8}
  X_{\dot\V}=X_{\dot\V;\ell}=\{s\in Q(\Scal;\ell):\Delta(s;\ell)\in\dot \V\}
\end{equation}
where $\dot\V$ is the (canonical) $\Scal$-name for the generic filter on $\Scal$.
Moreover, $X_{\dot\V}$ is branch through $\seq\theta$ which determines a cofinal mapping of $\N$ into $\theta$:

\begin{lem}
  If\/ $\Scal\subseteq\nso\theta^+$ is downwards closed then $\Scal$ forces that
  \begin{equation}
    f_{\dot \V}=\bigcup X_{\dot\V}:\N\to\theta
  \end{equation}
  is a strictly increasing function. Moreover,
  \begin{align}
    \label{eq:15}
    \Delta(q)&\forces q\sqsubset f_{\dot\V}\espc\textup{for all $q\in Q(\Scal)$,}\\
    \Scal&\forces\lim_{n\to\infty}f_{\dot \V}(n)=\theta.
  \end{align}
\end{lem}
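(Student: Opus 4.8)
The plan is to check, in turn, that $X_{\dot\V}$ is forced to be a $\sqsubseteq$-chain, that $\Delta(q)$ forces $q$ to be a \emph{proper} initial segment of $f_{\dot\V}$, that $f_{\dot\V}$ is total, and that its range is cofinal in $\theta$; throughout, a forcing statement is taken relative to $\sup\Scal$ (with $\Delta(q)$ read as the nonzero condition it determines below $\sup\Scal$, which is legitimate since $q\in Q(\Scal)$). For the chain property, if $s,t\in X_{\dot\V}$ then $\Delta(s),\Delta(t)\in\V$, so $\Delta(s)\cap\Delta(t)\in\V$ is in particular nonempty, whence $s$ and $t$ are $\sqsubseteq$-compatible by Proposition~\ref{prop:delta}(b); being finite sequences, they are $\sqsubseteq$-comparable. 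As every member of $X_{\dot\V}\subseteq Q(\Scal)$ is strictly increasing by Proposition~\ref{prop:qs-inc}, it follows that $f_{\dot\V}=\bigcup X_{\dot\V}$ is forced to be a strictly increasing partial function whose domain is an initial segment of $\N$, so only totality and cofinality remain.

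Fix $q\in Q(\Scal)$; I claim $\Delta(q)\forces q\sqsubset f_{\dot\V}$. That $\Delta(q)\forces q\in X_{\dot\V}$, hence $q\sqsubseteq f_{\dot\V}$, is immediate from the definition of $X_{\dot\V}$, so only the strictness is at issue, and for this it suffices to show that the conditions forcing some $s$ with $q\sqsubset s$ and $|s|=|q|+1$ into $X_{\dot\V}$ are dense below $\Delta(q)$. Given $c\le\Delta(q)$, use that the downwards closed $\Scal$ is dense below $\sup\Scal$ to fix a stationary $S\in\Scal$ below $c$; then $\downcl{\{S\}}\subseteq\Scal$, so $Q(\downcl{\{S\}})\subseteq Q(\Scal)$ by Proposition~\ref{prop:Q-inc}, and $q\in Q(\downcl{\{S\}})$ since $\Delta(q)\cap S$ is stationary (Proposition~\ref{prop:downcl}). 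By Corollary~\ref{cor:dense} there is $r\sqsupset q$ in $Q(\downcl{\{S\}})$; set $s=r\restriction(|q|+1)$, which still belongs to $Q(\Scal)$ because $Q(\Scal)$ is closed under initial segments, and $\Delta(s)\supseteq\Delta(r)$ by Proposition~\ref{prop:delta}(a). Since $r\in Q(\downcl{\{S\}})$, the set $\Delta(r)\cap S$ is stationary, so $\Delta(s)\cap S$ is a nonzero condition lying below both $c$ and $\Delta(s)$, and it forces $s\in X_{\dot\V}$. By genericity such an $s$ is added to $X_{\dot\V}$, giving $q\sqsubset s\sqsubseteq f_{\dot\V}$.

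For totality, suppose toward a contradiction that some generic extension (with $\sup\Scal\in\V$) has $f_{\dot\V}$ of finite domain $m$. Then $q^{*}:=f_{\dot\V}$ is the $\sqsubseteq$-largest member of the chain $X_{\dot\V}$, so $q^{*}\in X_{\dot\V}$ and hence $\Delta(q^{*})\in\V$; moreover $q^{*}\in Q(\Scal)\cap V$, since finite sequences of ordinals below $\theta$ lie in $V$ and membership in $Q(\Scal)$ is absolute. But the previous paragraph proves, in $V$, that $\Delta(q)\forces q\sqsubset f_{\dot\V}$ for every $q\in Q(\Scal)$; applied to the particular ground-model sequence $q^{*}$ and to the generic $\V\ni\Delta(q^{*})$, this yields $q^{*}\sqsubset f_{\dot\V}=q^{*}$, a contradiction. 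Hence the domain of $f_{\dot\V}$ is infinite, and being an initial segment of $\N$ it is all of $\N$; together with the preceding paragraphs this gives that $\Scal$ forces $f_{\dot\V}$ to be a strictly increasing function $\N\to\theta$ with $q\sqsubset f_{\dot\V}$ whenever $\Delta(q)\in\V$.

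Finally, as $f_{\dot\V}$ is strictly increasing, $\lim_{n\to\infty}f_{\dot\V}(n)=\sup_{n}f_{\dot\V}(n)\le\theta$, so it remains only to rule out a condition forcing this supremum to equal some $\beta<\theta$. If $c\forces\sup_{n}f_{\dot\V}(n)=\check\beta$, fix a stationary $S\in\Scal$ below $c$ and, by Corollary~\ref{cor:dense} applied to $Q(\downcl{\{S\}})$, pick $r\in Q(\downcl{\{S\}})\subseteq Q(\Scal)$ with $r(|r|-1)>\beta$; then $\Delta(r)\cap S$ is a nonzero condition below $c$ and below $\Delta(r)$, so by the fact established above it forces $f_{\dot\V}(|r|-1)=r(|r|-1)>\beta$, contradicting $c\forces\sup_{n}f_{\dot\V}(n)=\beta$. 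Hence $\Scal\forces\lim_{n\to\infty}f_{\dot\V}(n)=\theta$. The step needing the most care is the totality argument: one applies a forcing fact proved in the ground model for an arbitrary $q\in Q(\Scal)$ to a sequence $q^{*}$ that is only pinned down inside the extension, which is legitimate precisely because $Q(\Scal)$ and the relevant initial-segment relation are computed from ground-model data; a secondary point, used repeatedly, is the localization of Corollary~\ref{cor:dense} below an arbitrary condition by passing to $\downcl{\{S\}}$ for $S\in\Scal$ and invoking monotonicity of $Q$ (Proposition~\ref{prop:Q-inc}) together with closure under initial segments.
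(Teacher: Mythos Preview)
Your proof is correct and uses the same ingredients as the paper's (Propositions~\ref{prop:qs-inc} and~\ref{prop:delta} for the chain and monotonicity, Corollary~\ref{cor:dense} localized to $\downcl{\{S\}}$ for extensions). The only difference is organizational: you first prove the strict inclusion $\Delta(q)\forces q\sqsubset f_{\dot\V}$ and then derive totality from it by contradiction, whereas the paper handles totality and cofinality simultaneously with a single density argument (given $k$, $\beta<\theta$, and $S\in\Scal$, find $r\in Q(\downcl{\{S\}})$ with $|r|\ge k$ and $r(|r|-1)\ge\beta$), after which~\eqref{eq:15} follows immediately since $f_{\dot\V}$ is infinite while $q$ is finite.
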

\begin{proof}
  By Propositions~\ref{prop:qs-inc} and~\ref{prop:delta}, $f_{\dot\V}$ names a strictly increasing
  function. Given $k$ and $\beta<\theta$, let $S\in\Scal$ be arbitrary.
  It follows from Corollary~\ref{cor:dense} that there is an
  $r\in Q(\downcl{\{S\}})$ with $|r|\ge k$ and $r(|r|-1)\ge\beta$.
  Thus $S'=\Delta(r)\cap S\in\upcl\Scal$, and $S'\forces r\in X_{\dot\V}$,
  proving $\Scal$ forces that $|\dom(f_{\dot\V})|\ge k$ and $\sup_{n}f_{\dot
    \V}(n)\ge\beta$,
  and hence forces that the domain of $f_{\dot\V}$ is all of $\N$ and
  its range is cofinal.
\end{proof}

The ladder system serves another purpose to encode the interaction
between the tower representative $T$ and some other family $H$ of infinite subsets of
$\N$, in our case $[T]\subseteq\co([H])$.

\begin{definition}
  \label{def:satisfy}
  A property $\varphi(n,u,v)$ is said to be \emph{satisfied by\/ $(T,H)$ over $\pi$} if, when
  identifying $T$ and $H$ as subfamilies of $\C_\pi$, 
  \begin{equation}
    \label{eq:9}
    \varphi\bigl(n,t_\xi(n),b(n)\bigr)\text{ holds for all but finitely
      many $n\in\N$,}
  \end{equation}
  for all $\xi<\theta$ and $b\in H$. Then $g\in\irr$ is called an
  \emph{error correction for $(T,H)$ with respect to $\ell$} if whenever $g\lefnt h$,
  \begin{equation}
    \varphi\bigl(h(n),t_{\ell_\delta(n)}\circ h(n),b\circ h(n)\bigr)
    \text{ holds for all but finitely many $n$,}
  \end{equation}
  for all $\delta\in\limo\theta$ and all $b\in H$.
\end{definition}

\begin{prop}
  \label{prop:satisfy}
  Suppose $\theta,|H|<\b$.
  Then whenever $(T,H)$ satisfies a property $\varphi$ \tu(over some $\pi$\tu),
  it has an error correction with respect to $\ell$.
\end{prop}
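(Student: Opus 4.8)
The plan is to reduce Proposition~\ref{prop:satisfy} to a single application of the hypothesis $\theta,|H|<\b$ after packaging the relevant data into a bounded family of functions in $\irr$. Recall that $\varphi$ being satisfied by $(T,H)$ over $\pi$ means: for each pair $(\xi,b)$ with $\xi<\theta$ and $b\in H$, there is some threshold $m(\xi,b)\in\N$ beyond which $\varphi(n,t_\xi(n),b(n))$ holds for all $n\ge m(\xi,b)$. We want an error correction $g\in\irr$ such that $g\lefnt h$ forces $\varphi(h(n),t_{\ell_\delta(n)}\circ h(n),b\circ h(n))$ eventually, for all $\delta\in\limo\theta$ and $b\in H$. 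The obstacle is that $\delta$ ranges over the $\theta$-many limit ordinals, but for each such $\delta$ the ladder $\ell_\delta\colon\omega\to\delta$ takes only values $\ell_\delta(n)<\theta$, so the ``pieces'' $t_{\ell_\delta(n)}$ are drawn from the tower $T$ whose index set has size $|\theta|<\b$.

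First I would, for each pair $(\delta,\alpha)$ with $\alpha<\delta\in\limo\theta$, introduce a function $\tilde g(\delta,\alpha)\in\irr$ that records how far along the partition one must go before the threshold behavior kicks in; concretely, $\tilde g(\delta,\alpha)(n)$ should be large enough that $\varphi\bigl(k, t_{\ell_\delta(n)}\cap I_\pi(k)\text{-data}, b\text{-data}\bigr)$ holds for all $k\ge\tilde g(\delta,\alpha)(n)$ and all the finitely-many-exception points have been passed — this is exactly the same bookkeeping device already used in equation~\eqref{eq:5} for the special property handled by Lemma~\ref{lem:dense}. The key point is that there are only $|\theta|\cdot|\theta|<\b$ many such functions $\tilde g(\delta,\alpha)$, together with the $|H|<\b$ many functions coming from the $b\in H$ side; so the whole family has size $<\b$.

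Next I would invoke $\b$: since $\max\{\theta,|H|\}<\b$, the combined family of all these functions in $\irr$ is $\lefnt$-bounded, so there is a single $g=g_T\in\irr$ with $\tilde g(\delta,\alpha)\lefnt g$ for all $\alpha<\delta\in\limo\theta$ (this is essentially~\eqref{eq:18}), and we may also absorb the thresholds coming from $H$ and arrange $g(n)>n$. Then I would verify that this $g$ is the desired error correction: given any strictly increasing $h$ with $g\lefnt h$, fix $\delta\in\limo\theta$ and $b\in H$; for all sufficiently large $n$ we have $h(n)\ge g(n)\ge \tilde g(\delta,\ell_\delta(n'))(n)$-type bounds for the finitely many relevant earlier indices, and since $h$ is increasing, $h(n)$ lies past the finite exception set for the pair $(\ell_\delta(n), b)$, so $\varphi\bigl(h(n), t_{\ell_\delta(n)}\circ h(n), b\circ h(n)\bigr)$ holds. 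The mild subtlety here — and the one place needing care — is that the threshold $m(\ell_\delta(n),b)$ depends on $\ell_\delta(n)$, which itself grows with $n$; but $\ell_\delta(n)$ ranges over the $<\b$-sized index set of $T$, so after bounding we can take a uniform tail, exactly as in the proof of Lemma~\ref{lem:dense}. I expect this diagonalization-against-a-moving-threshold to be the only real content; everything else is the now-routine ``$<\b$ many functions are jointly bounded'' argument.
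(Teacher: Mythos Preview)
The paper gives no proof for this proposition; it is treated as routine. Your overall plan---package the thresholds into a family of functions in $\irr$ of size $<\b$ and take a common $\lefnt$-bound---is exactly the intended argument.

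However, your parametrization is wrong. You introduce $\tilde g(\delta,\alpha)$ for pairs $(\delta,\alpha)$ with $\alpha<\delta\in\limo\theta$, explicitly citing equation~\eqref{eq:5} as the template. But equations~\eqref{eq:5}--\eqref{eq:18} construct $g_T$, which encodes the tower containment $t_{\ell_\delta(n)}\subseteqfnt t_\alpha$; that is a separate device, and the parameter $\alpha$ has no role in Definition~\ref{def:satisfy}. An error correction must handle $\varphi\bigl(h(n),t_{\ell_\delta(n)}\circ h(n),b\circ h(n)\bigr)$ for all $\delta\in\limo\theta$ and all $b\in H$, so the correct indexing is by pairs $(\delta,b)\in\limo\theta\times H$. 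Concretely: for each $\xi<\theta$ and $b\in H$ let $m(\xi,b)$ be least with $\varphi\bigl(n,t_\xi(n),b(n)\bigr)$ for all $n\ge m(\xi,b)$, and set $\tilde g(\delta,b)(n)=m\bigl(\ell_\delta(n),b\bigr)$. This family has size at most $\max(\theta,|H|)<\b$, so some $g$ eventually dominates every $\tilde g(\delta,b)$, and the verification is immediate. Your suggestion to separate ``$|\theta|\cdot|\theta|$ functions from the $T$ side'' from ``$|H|$ functions from the $H$ side'' cannot work for an arbitrary property $\varphi(n,u,v)$: the threshold $m(\xi,b)$ depends jointly on $\xi$ and $b$, and there is no monotonicity in either argument to decouple them. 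You seem to recognize this yourself in the final paragraph, where you correctly speak of ``the pair $(\ell_\delta(n),b)$'', but the setup you describe does not produce those bounds.
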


\begin{lem}
  \label{lem:error-cor}
  Suppose that $(T, H)$ satisfies a property $\varphi$ with error correction~$g$ with respect to $\ell$.
  If\/ $g\lefnt h$,
  then in the forcing extension by\/ $\nso\theta^+$ with generic~$\V$,
  \begin{equation}
    \label{eq:20}
    \varphi\bigl(h(n), t_{f_{\V}(n)}\circ h(n), b\circ h(n)\bigr)
    \textup{ holds for all but finitely many $n$,} 
  \end{equation}
  for all\/ $b\in H$.
\end{lem}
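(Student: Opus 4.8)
The plan is to fix a single $b\in H$ (which lies in the ground model) and a condition $S\in\nso\theta^+$, instantiate $\Scal=\nso\theta^+$ in the constructions above, and produce a stationary $S'\subseteq S$ together with an $N\in\N$ such that $S'$ forces $\varphi\bigl(h(n),t_{f_{\dot\V}(n)}\circ h(n),b\circ h(n)\bigr)$ for all $n\ge N$. Since $b$ and $H$ belong to $V$, forcing this below a dense set of conditions delivers~\eqref{eq:20}.

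The first step is to turn the error-correction hypothesis into a bound that is uniform on a stationary set. Because $g\lefnt h$ and $g$ is an error correction for $(T,H)$ with respect to $\ell$, for each $\delta\in\limo\theta$ the set $F_\delta=\bigl\{n:\neg\varphi\bigl(h(n),t_{\ell_\delta(n)}\circ h(n),b\circ h(n)\bigr)\bigr\}$ is finite; let $N_\delta\in\N$ be least such that $\varphi\bigl(h(n),t_{\ell_\delta(n)}\circ h(n),b\circ h(n)\bigr)$ holds for all $n\ge N_\delta$. The sets $E_N=\{\delta\in\limo\theta:N_\delta\le N\}$ ($N\in\N$) belong to $V$ and increase to $\limo\theta$, so $S=\bigcup_{N\in\N}(S\cap E_N)$. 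Since $\nso\theta$ is $\sigma$-complete (even $\cf(\theta)$-complete), some $S':=S\cap E_N$ is stationary, and for this $N$ we have: for every $\delta\in S'$ and every $n\ge N$, $\varphi\bigl(h(n),t_{\ell_\delta(n)}\circ h(n),b\circ h(n)\bigr)$ holds.

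Next I would prove the claim that $S'\forces$ `$\varphi\bigl(h(n),t_{f_{\dot\V}(n)}\circ h(n),b\circ h(n)\bigr)$ for all $n\ge N$'. Suppose not; then there are a stationary $S''\subseteq S'$ and an $n\ge N$ with $S''\forces\neg\varphi\bigl(h(n),t_{f_{\dot\V}(n)}\circ h(n),b\circ h(n)\bigr)$. By the density argument underlying Lemma~\ref{lem:dense} and Corollary~\ref{cor:dense} (together with the preceding lemma showing $\Scal$ forces $\dom(f_{\dot\V})=\N$), there is a $q\in Q(\Scal)$ with $|q|=n+1$ such that $S''':=S''\cap\Delta(q)$ is stationary. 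Then $S'''\subseteq\Delta(q)$ forces $q\sqsubset f_{\dot\V}$, hence $S'''\forces f_{\dot\V}(n)=q(n)$, and since $S'''\subseteq S''$ this gives $S'''\forces\neg\varphi\bigl(h(n),t_{q(n)}\circ h(n),b\circ h(n)\bigr)$. But $q(n)<\theta$, and $h$, $b$, $\pi$ all lie in $V$, so $\neg\varphi\bigl(h(n),t_{q(n)}\circ h(n),b\circ h(n)\bigr)$ is a statement decided in $V$; being forced by a nonzero condition it is true in $V$. On the other hand $S'''$ is nonempty, and any $\delta\in S'''\subseteq S'\cap\Delta(q)$ has $n\ge N$ and $\delta\in S'$, so $\varphi\bigl(h(n),t_{\ell_\delta(n)}\circ h(n),b\circ h(n)\bigr)$ holds, while $\ell_\delta(n)=q(n)$ because $\delta\in\Delta(q)$ --- a contradiction.

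Finally, since below every condition there is one forcing `$\varphi\bigl(h(n),t_{f_{\dot\V}(n)}\circ h(n),b\circ h(n)\bigr)$ for all but finitely many $n$' for the fixed $b$, that statement is forced by $\nso\theta^+$; as $b\in H$ was arbitrary and $H\in V$, in $V[\V]$ we obtain, for every $b\in H$, that $\varphi\bigl(h(n),t_{f_{\V}(n)}\circ h(n),b\circ h(n)\bigr)$ holds for all but finitely many $n$, which is~\eqref{eq:20}. The one delicate point is the density step producing $q$ with $|q|=n+1$ and $S''\cap\Delta(q)$ stationary: this is precisely the ladder-climbing / pressing-down mechanism already set up in Lemma~\ref{lem:dense} and Corollary~\ref{cor:dense}, so the remaining work is just bookkeeping around the absoluteness of $\varphi$ evaluated at ground-model arguments.
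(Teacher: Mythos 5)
Your proposal is correct and follows essentially the same route as the paper: fix $b\in H$, use the error-correction hypothesis to get a threshold $n_\delta$ for each $\delta$, stabilize it on a stationary subset of a given condition, and then transfer from $t_{\ell_\delta(n)}$ to $t_{f_{\dot\V}(n)}$ by pressing down on initial segments of the ladders together with equation~\eqref{eq:15}. The only difference is that you unfold the paper's terse ``by Pressing Down using equation~\eqref{eq:15}'' into an explicit density-plus-contradiction argument, which is a faithful elaboration rather than a different proof.
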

\begin{proof}
  Let $b\in H$ be given.
  The definitions entail that each $\delta\in\limo\theta$ has an $n_\delta$ such that
  \begin{equation}
    \varphi\bigl(h(n),t_{\ell_\delta(n)}\circ h(n),b\circ h(n)\bigr)
    \text{ holds for all $n\ge n_\delta$.}
  \end{equation}
  Taking $S\in\nso\theta^+$, there exists $\bar n$ such that
  $S'=\{\delta\in S:n_\delta=\bar n\}$ is stationary.
  Now by Pressing Down using equation~\eqref{eq:15},
  \begin{equation}
    S'\forces \varphi\bigl(h(n),t_{f_{\dot \V}(n)}\circ h(n),b\circ h(n)\bigr)
      \text{ for all $n\ge \bar n$.}
  \end{equation}
\end{proof}

Naturally we are interested in properties of the form $\intprop(n,u,v;c)$ where
\begin{equation}
  \intprop(n,u,v;c)\Iff |u\cap v|\ge c(n)
\end{equation}
for all $n$.

\begin{cor}
  \label{cor:intfin}
  Let\/ $c\in\irr$.
  If\/ $(T, H)$ satisfies $\intprop(c)$ over $\pi$
  with error correction $g_{T,H}$ with respect to $\ell$, then whenever  $h\in\irr$ satisfies
  \begin{myenumerate}
  \item $h\gefnt g_{T,H}$ is strictly increasing,
  \item $\sum_{n=0}^\infty 2^{-c\circ h(n)}<\infty$,
  \end{myenumerate}
  every $\intfin[\pi\circ h](b)$ has measure one\tu:
  \begin{equation}
    \label{eq:12}
    \nso\theta^+\forces \mu_{f_{\dot\V},h}\bigl(\intfin[\pi\circ h](b)\bigr)=1
  \end{equation}
  for all\/ $b\in H$.
\end{cor}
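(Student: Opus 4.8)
The plan is to reduce Corollary~\ref{cor:intfin} to a single application of Lemma~\ref{lem:error-cor} followed by (a harmless generalisation of) Corollary~\ref{cor:meas-one-h}. Fix $b\in H$ and a $\nso\theta^+$-generic $\V$, and work inside $V[\V]$; at the end the conclusion, being proved for arbitrary $b$ and $\V$, will be exactly the forced statement~\eqref{eq:12}.

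The first step is to transfer the intersection property to the generic enumeration. Since $(T,H)$ satisfies $\intprop(c)$ over $\pi$ with error correction $g_{T,H}$ with respect to $\ell$, and since hypothesis~(a) gives $g_{T,H}\lefnt h$, Lemma~\ref{lem:error-cor} (taken with $\varphi=\intprop(c)$) forces that $\intprop\bigl(h(n),\,t_{f_{\dot\V}(n)}\circ h(n),\,b\circ h(n)\bigr)$ holds for all but finitely many $n$; unwinding the definition of $\intprop$ and of the notation $\circ h$ this reads
\begin{equation*}
  |t_{f_{\dot\V}(n)}\cap b\cap I_{\pi\circ h}(n)|\ \ge\ c\circ h(n)\qquad\text{for all but finitely many }n.
\end{equation*}

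The second step is to recognise the product $\mu_{f_{\dot\V},h}$ as a partition measure, so that Corollary~\ref{cor:meas-one-h} is applicable. By~\eqref{eq:2} (together with the second equality of~\eqref{eq:4}), $\mu_{f_{\dot\V},h}=\prod_{n}\nu_{t(n)}$, where $t\in\C_{\pi\circ h}$ is the element with $t(n)=t_{f_{\dot\V}(n)}\cap I_{\pi\circ h}(n)$; that is, $\mu_{f_{\dot\V},h}$ is precisely the partition measure attached to $t$ for the partition $n\mapsto I_{\pi\circ h}(n)$ of $\N_{\pi\circ h}$, in the sense of~\S\ref{sec:partition-measures} with $\N$ replaced by $\N_{\pi\circ h}$. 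Identifying $b$ with its image in $\C_{\pi\circ h}$, the displayed bound says $|t(n)\cap b(n)|\ge c\circ h(n)$ eventually; hypothesis~(b) gives $\sum_n 2^{-c\circ h(n)}<\infty$, which also forces $c\circ h(n)\to\infty$, so the finitely many exceptional $n$ (and any resulting zero factors in the tail products) are harmless. Hence $\sum_n 2^{-|t(n)\cap b(n)|}<\infty$. The proof of Corollary~\ref{cor:meas-one-h} uses only Lemma~\ref{lem:meas-H} and the Cauchy criterion for infinite products, so it applies verbatim over $\C_{\pi\circ h}$ and yields $\mu_{f_{\dot\V},h}\bigl(\intfin[\pi\circ h](b)\bigr)=1$, which is~\eqref{eq:12}.

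I do not expect a genuine obstacle: the corollary is essentially bookkeeping on top of Lemmas~\ref{lem:error-cor} and~\ref{lem:meas-H} and Corollary~\ref{cor:meas-one-h}. The two spots that need a moment's care are (i) matching the three occurrences of $\circ h$ in the conclusion of Lemma~\ref{lem:error-cor} --- the index of the interval $I_{\pi\circ h}(n)=I_\pi(h(n))$, the tower term $t_{f_{\dot\V}(n)}\circ h(n)$, and the argument $c\circ h(n)$ of the summability hypothesis --- and (ii) observing that $\mu_{f_{\dot\V},h}$, a priori a product of the measures $\nu_{I_{\pi\circ h}(n),t_{f_{\dot\V}(n)}}$ with an $n$-varying base $t_{f_{\dot\V}(n)}$, is nonetheless a bona fide partition measure on the Cantor space $\C_{\pi\circ h}=\prod_n\power(I_{\pi\circ h}(n))$, so that Corollary~\ref{cor:meas-one-h} (generalised, with no change to its proof, from partitions of $\N$ to partitions of subsets of $\N$) is available.
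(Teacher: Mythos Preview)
Your proof is correct and follows essentially the same route as the paper: both apply Lemma~\ref{lem:error-cor} to obtain $|t_{f_{\dot\V}(n)}\cap b\cap I_\pi(h(n))|\ge c\circ h(n)$ eventually, and then finish with Lemma~\ref{lem:meas-H} plus the Cauchy criterion for infinite products. The only cosmetic difference is that you package the second step as an appeal to (a trivially generalised) Corollary~\ref{cor:meas-one-h} after recognising $\mu_{f_{\dot\V},h}$ as a partition measure, whereas the paper redoes that two-line computation inline.
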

\begin{proof}
  For every $b\in H$,
  whenever $\intprop\bigl(h(n), t_{f_{\dot\V}(n)}\circ h(n), b\circ h(n);c\bigr)$ holds (identifying
  $b$ and $t_\alpha$ as members of $\C_\pi$),
  $|t_{f_{\dot\V}(n)}\circ h(n)\cap b\circ h(n)|\ge c\circ h(n)$ implies that
  \begin{equation}
    \begin{split}
    \mu_{f_{\dot\V},h}\bigl(\bigl\<\intersects{I_{\pi\circ h(n)}}(b\circ h(n))\bigr\>\bigr)
    &=\nu_{I_{\pi\circ h(n)},t_{f_{\dot\V}(n)}}\bigl(\intersects{I_{\pi\circ h(n)}}(b\circ h(n))\bigr)\\
    &=1- 2^{-|t_{f_{\dot\V}(n)}\cap b\cap I_{\pi\circ h(n)}|}\\
    &\ge 1-2^{-c\circ h(n)}
    \end{split}
  \end{equation}
  by Lemma~\ref{lem:meas-H}. Now the result follows from the Cauchy criterion for products
  (cf.~proof of Corollary~\ref{cor:meas-one-h}).
\end{proof}

\begin{rem}
  A weaker but more natural property than satisfying $\intprop(c)$ is used in
  Corollary~\ref{cor:intfin}, namely that $\sum_{n=0}^\infty 2^{-|t_\alpha(n)\cap b(n)|}<\infty$. 
    One can generalize the definition of an error correction to apply to the latter
    property. However, if $(T,H)$ merely
  satisfies this weaker property then $\theta,|H|<\b$ is not sufficient to ensure the existence of
  an error correction (in fact, one would need $\theta,|H|<\add(\nulls)$ the additivity of Lebesgue measure).
\end{rem}

The mapping $\Delta$ of $Q(\Scal)$ into
$\Scal$ has been shown to determine a cofinal mapping  $f_{\V}:\N\to\nobreak\theta$ for $\Scal$-generic $\V$.
We need moreover for $Q(\Scal)$ to embed as a forcing notion into $\Scal$, i.e.~the 
corresponding branch $X_{\V}$ through $Q(\Scal)$ (c.f.~equation~\eqref{eq:8}) must intersect every dense subset of
$Q(\Scal)$. This is because the forcing notion
$Q(\Scal)$ has nice properties---its definition is absolute and its cardinality
$|Q(\Scal)|\le\theta$---that enable us the find an extension of $[T]$ which corresponds to subset of
$\C_\pi$ with positive measure.

On the other hand, we cannot simply force with $\Q(\Scal)$ instead
of $\Scal$ because equation~\eqref{eq:20} may fail in the former case, resulting in $\intfin(b)$
having measure zero. 

\begin{definition}
  $\Phi:P\to Q$ is a \emph{complete embedding} of the partial orders $(P,\le)$ into $(Q,\altle)$ if it a
  pre-embedding which preserves maximal antichains. That is, if $A\subseteq P$ is a maximal
  antichain then so is its image $\Phi[A]\subseteq Q$ (see e.g.~\cite{kunen83}).
\end{definition}

\begin{definition}
  A subset $D$ of a partial order $(P,\le)$ is said to be \emph{predense in~$S\subseteq P$}
  if every element $s$ of $S$ is compatible with some member $d$ of $D$. Thus $D$ is predense in $P$
  iff $D$ contains a maximum antichain, as usual. 
\end{definition}

The next result is dignified as a ``Theorem'' because to our knowledge it is new result on the
structure of stationary sets of ordinals of countable cofinality.

\begin{thm}
  \label{thm:stat-embedding}
  Let $\Scal\subseteq\nso\theta^+$ be downwards closed, and\/ $\ell$ a ladder system on~$\theta$.
  Then the image of every predense
  subset of $Q(\Scal;\ell)$ under $\Delta(\cdot;\ell)$ is predense in~$\Scal$.
  In particular, $\Delta(\cdot,\ell)$ is a complete embedding of\/ $Q(\nso\theta^+;\ell)$ into $\nso\theta^+$.
\end{thm}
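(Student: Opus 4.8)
The plan is to prove the ``predense'' statement directly, since the ``complete embedding'' clause follows immediately: a pre-embedding (Proposition~\ref{prop:delta}) that sends predense sets to predense sets sends maximal antichains to maximal antichains, and $\Delta(\cdot;\ell)$ is a pre-embedding by Proposition~\ref{prop:delta}. So fix a downwards closed $\Scal\subseteq\nso\theta^+$, a ladder system $\ell$, and a subset $D\subseteq Q(\Scal;\ell)$ that is predense in $Q(\Scal)$. We must show $\Delta[D]$ is predense in $\Scal$, i.e.\ that every $S\in\Scal$ is compatible (in $\nso\theta^+$, so: has stationary intersection) with $\Delta(d)$ for some $d\in D$.

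First I would reduce to a single $S$ and pass to the subtree $Q(\downcl{\{S\}})$. By Proposition~\ref{prop:downcl}, $s\in Q(\downcl{\{S\}})$ iff $\Delta(s)\cap S$ is stationary, so it suffices to find $d\in D$ with $\Delta(d)\cap S\in\nso\theta^+$, equivalently $d\in Q(\downcl{\{S\}})$ with $d$ compatible with some element of $D$ in $Q(\Scal)$ --- but actually we want $d\in D$ itself. The natural move is contrapositive/exhaustion: suppose toward a contradiction that $S$ is incompatible with $\Delta(d)$ for every $d\in D$, i.e.\ $\Delta(d)\cap S$ is nonstationary for all $d\in D$. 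I then want to build, by a recursion of length $\omega$ using Lemma~\ref{lem:dense} (or rather Corollary~\ref{cor:dense} and the pressing-down machinery behind it), a condition $q\in Q(\downcl{\{S\}})$ that is incompatible in $Q(\Scal)$ with every member of $D$ --- contradicting predensity of $D$ in $Q(\Scal)$. The key point making this possible: if $\Delta(d)\cap S$ is nonstationary, then by Fodor/pressing-down its ``trace'' can be climbed past, so at stage $n$ of the construction we can choose $q\restriction(n+1)$ avoiding the (nonstationary, hence bounded-in-a-club) obstruction coming from all $d\in D$ of length $\le n$; here one uses that for each fixed length there are at most $|\theta|$ many nodes, and that a union of $<\cf(\theta)$-many --- in particular countably many, since $\cf(\theta)=\omega_1$ is uncountable as $\theta$ has uncountable cofinality, wait: $\limo\theta$ concerns countable-cofinality points, and $\cf(\theta)$ may be exactly $\omega_1$ --- nonstationary sets is nonstationary. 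The recursion produces an increasing branch through $\seq\theta$; its initial segments lie in $Q(\downcl{\{S\}})$ by construction, and by a diagonalization each is incompatible with every sufficiently short element of $D$, so the branch itself (or a long enough initial segment, using that incompatibility in a tree is witnessed at finite level) meets no element of $D$, the desired contradiction.

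The main obstacle is getting the bookkeeping of the recursion right so that a \emph{single} node $q$ --- or a branch --- simultaneously evades \emph{all} of $D$, not just one element at a time. The subtlety is that Lemma~\ref{lem:dense} lets us extend within $Q(\Scal)$ but we need to extend within $Q(\downcl{\{S\}})$ while dodging $\Delta[D]$; this requires an ``intersection of the relevant stationary sets stays stationary'' argument, which works because at each finite stage only countably many (indeed, for nodes of bounded length, $|\theta|$-many, but only the countably many of length $\le n$ matter at stage $n$, and more carefully one should enumerate $D$ itself in order type $|\theta|=\cf(\theta)\cdot\ldots$ --- here one genuinely uses $\cf(\theta)$-completeness of the nonstationary ideal, i.e.\ that $\nso\theta$ is $\cf(\theta)$-complete, to take the intersection) constraints are active. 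I expect the cleanest route is: enumerate $D$ (without loss of generality of size $\le|\theta|$, since $|Q(\Scal)|\le|\theta|$), and for the incompatibility claim observe that $q$ is incompatible with $d$ in the tree iff $q\not\sqsubseteq d$ and $d\not\sqsubseteq q$, which is decided by $q\restriction(|d|)\ne d$ or by the values; so a branch $f$ misses $d$ precisely when $f\restriction|d|\ne d$, and we arrange $f\restriction|d|\ne d$ for all $d\in D$ by ensuring at stage $|d|$ that the value chosen differs from $d(|d|-1)$ whenever $d\restriction(|d|-1)=f\restriction(|d|-1)$ --- again possible because for each length there are $\le|\theta|<\cf(\theta)^+$... the point being the obstruction set at each stage is nonstationary. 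Once this combinatorial core is nailed down, conclude: the branch $f$ has all initial segments in $Q(\downcl{\{S\}})$ and is incompatible with every $d\in D$, so no initial segment of $f$ of length $\ge$ some finite bound is below or above any $d$, contradicting that $D$ is predense in $Q(\Scal)$. Hence $S$ is compatible with some $\Delta(d)$, $d\in D$, as required; and the complete-embedding statement follows since maximal antichains are the predense antichains.
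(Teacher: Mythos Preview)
Your construction of the branch $f$ does go through: by Fodor, at each stage you can extend $f\restriction n$ to some $f\restriction(n+1)\in Q(\downcl{\{S\}})$, and the contradiction hypothesis guarantees $f\restriction(n+1)\notin D$ (since if it were in $D$, then $\Delta(f\restriction(n+1))\cap S$ would be both stationary and nonstationary). So you obtain an infinite branch $f$ none of whose initial segments lies in $D$.

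The gap is in the final step. From ``no $d\in D$ is an initial segment of $f$'' you conclude that some $f\restriction m$ is incompatible in $Q(\Scal)$ with every $d\in D$. This does not follow. In a tree, $f\restriction m$ is compatible with $d$ iff one is an initial segment of the other; you have ruled out $d\sqsubseteq f\restriction m$, but not $f\restriction m\sqsubseteq d$. Predensity of $D$ in $Q(\Scal)$ tells you that each $f\restriction m$ is compatible with some $d_m\in D$, hence $f\restriction m\sqsubsetneq d_m$, and nothing you have arranged prevents this. There may be elements of $D$ of unbounded length lying above every initial segment of $f$, and then no single $f\restriction m$ witnesses failure of predensity. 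Your phrase ``no initial segment of $f$ of length $\ge$ some finite bound is below or above any $d$'' is exactly where the argument breaks: you have handled ``above'' but not ``below''.

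The paper resolves this by first reducing to a maximal antichain $A\subseteq Q(\Scal)$ and exploiting the fact that $\downcl A$ (the tree of initial segments of members of $A$) has no infinite branch---a branch would force infinitely many members of the antichain to be comparable. One then proceeds by transfinite induction on the rank of this well-founded tree, using a shift of the ladder system to drop the rank at the inductive step. This well-foundedness is precisely the structural ingredient your direct $\omega$-recursion lacks; without it, the obstruction above is genuine.
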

\begin{proof}
  Each $K\subseteq\seq\theta$ forms a tree $\downcl{K}=\bigcup_{r\in K}\{q:q\sqsubseteq r\}$
  under the ordering~$\sqsubseteq$.
  When $A\subseteq\seq\theta$ is moreover an antichain (this is an equivalent notion for the tree
  and its reverse poset ordering),
  the associated tree $\downcl{A}$ has no infinite branches; in other words, turning the tree upside down
  it is still well-founded (because a branch through $\seq\theta$ can
  only intersect at most one element of $A$).

  The classical notion of the rank of a well-founded tree $(T,\le)$ is defined by
  \begin{equation}
    \rank_T(q)=
    \begin{cases}
      \hfill 0&\text{if $q$ is a terminal node,}\\
      \displaystyle\sup_{r\in\succ_T(q)}\rank_T(r)+1&\text{otherwise.}
    \end{cases}
  \end{equation}
  The rank of the tree $\rank(T)$ is $\rank_T$ of its root node (the notion of a tree used here has a unique
  root). 

  It is required to prove that $\Delta$ maps maximal antichains $A\subseteq Q(\Scal)$ to subsets
  that are predense in $\Scal$. 
  The proof is by induction on the rank of $\downcl{A}$ for all $\Scal$ simultaneously.
  In the base case, trivially $\Delta(\emptyset;\ell)=\limo\theta\supseteq S$ for all $S\in\Scal$. 

  For the induction step suppose that $\downcl{A}$ has rank $\gamma>0$.
  First observe that $\{\Delta(r;\ell):r\in\succ_{\downcl A}(\emptyset)\}$ is predense in $\Scal$:
  Given $S\in\Scal$, by Pressing Down there exists a stationary $S'\subseteq S$ and $\alpha$ such that
  $\ell_\delta(0)=\alpha$ for all $\delta\in S'$. Thus $\<\alpha\>\in Q(\Scal)$, and by maximality
  there exists $a\in A$ compatible with $\<\alpha\>$, i.e.~$\<\alpha\>\in\succ_{\downcl{A}}(\emptyset)$.
  
  For each $r\in Q(\Scal)$, let $A_r$ be the set of all $a\in A$ with~$a\sqsupseteq r$,
  and put $\Scal_r=\downcl\{\Delta(r;\ell)\}$.
  Let $\ell'$ be the ladder system $\ell$ shifted by one,
  i.e.~$\ell'_\delta=\sigma(\ell_\delta)$ for all $\delta$, where $\sigma(x)(n)=x(n+1)$ is the shift-operator.
  Then
  \begin{equation}
    \label{eq:14}
    \Delta(q;\ell)=\Delta(\sigma(q);\ell')\cap \Delta(\<q(0)\>;\ell)
    \espc\text{for all $q\ne\emptyset$ in $\seq\theta$.}
  \end{equation}
  $a\in A\setminus A_r$ implies $a\perp Q(\Scal_r)$ by Proposition~\ref{prop:downcl}. Hence, $A_r$
  is a maximal antichain of $Q(\Scal_r)$ by the maximality of $A$,
  and thus $\sigma[A_r]$ is a maximal antichain of  $Q(\Scal_r;\ell')$ by equation~\eqref{eq:14}.
    
  Now as $\sigma:\downcl{A_r}\setminus\{\emptyset\}\to\sigma[\downcl{A_r}]$
  is a tree isomorphism mapping $r$ to $\emptyset$,\linebreak
  $\rank(\sigma[\downcl{A_r}])=\rank_{\downcl{A_r}}(r)<\rank(\downcl A_r)\le\rank(\downcl A)=\gamma$.
  By induction,\linebreak $\{\Delta(\sigma(a);\ell'):a\in A_r\}$ is predense in $\Scal_r$, and hence so is
  $\{\Delta(a;\ell):a\in A_r\}$. Thus from the earlier
  observation that the union of the $\Scal_r$'s is predense, the image of $A$ under
  $\Delta(\cdot;\ell)$ is predense in $\Scal$.
\end{proof}

Following is the standard poset for forcing a common extension of a filter-base~$F$ in $\pnfin$,
which shall be applied to our tower $F=T$.

\begin{definition}
  \label{def:forcing-std}
  Given a family $F$ of subsets of $\N$, enumerated as $(t_\xi:\xi<\theta)$, let $P_F$ denote the partial
  ordering of all pairs $p=(s_p,\vgamma_p)$ where
  \begin{myenumerate}
  \item $s_p\subseteq\N$ is finite,
  \item $\vgamma_p\subseteq\theta$ is finite,
  \end{myenumerate}
  ordered by $q\le p$ if
  \begin{myenumerate}
  \setcounter{enumi}{2}
  \item $s_q\supseteq s_p$ and $\vgamma_q\supseteq\vgamma_p$,
  \item $s_q\setminus s_p\subseteq t_\xi$ for all $\xi\in\vgamma_p$.
  \end{myenumerate}
\end{definition}

Then a filter $G\subseteq P_F$ defines
\begin{equation}
  x_{G}=\bigcup_{p\in G}s_p\subseteq\N.
\end{equation}

\begin{prop}
  Whenever $G$ intersects the dense set
  \begin{equation}
    E_\xi=\{p\in P_F:\xi\in\vgamma_p\}\espc\textup{($\xi<\theta$)},
  \end{equation}
  we have $x_G\subseteqfnt t_\xi$.
\end{prop}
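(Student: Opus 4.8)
The statement splits into the (routine) assertion that each $E_\xi$ is dense, and the substantive clause that meeting $E_\xi$ forces $x_G\subseteqfnt t_\xi$. For density, the plan is simply to observe that any $p\in P_F$ is extended by the condition $(s_p,\vgamma_p\cup\{\xi\})$, which lies in $E_\xi$; nothing more is needed.

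For the main clause I would first fix a witness $p_0\in G\cap E_\xi$, so that $\xi\in\vgamma_{p_0}$ and $s_{p_0}$ is finite, and then argue that $x_G\setminus s_{p_0}\subseteq t_\xi$, which is precisely $x_G\subseteqfnt t_\xi$. To that end, let $q\in G$ be arbitrary; since $G$ is a filter it is directed, so there is $r\in G$ with $r\le p_0$ and $r\le q$. Now $r\le p_0$ together with $\xi\in\vgamma_{p_0}$ gives, by the last defining clause of the order on $P_F$ (namely $s_r\setminus s_{p_0}\subseteq t_\eta$ for all $\eta\in\vgamma_{p_0}$), that $s_r\setminus s_{p_0}\subseteq t_\xi$; and $r\le q$ gives $s_q\subseteq s_r$. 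Hence $s_q\setminus s_{p_0}\subseteq s_r\setminus s_{p_0}\subseteq t_\xi$. Taking the union over all $q\in G$, $x_G\setminus s_{p_0}=\bigcup_{q\in G}(s_q\setminus s_{p_0})\subseteq t_\xi$, and since $s_{p_0}$ is finite this is exactly the claimed almost-inclusion.

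I do not anticipate any real obstacle: this is the standard ``the generic object is almost contained in the sets to which it has committed'' computation. The one point worth flagging is that directedness of $G$ is essential in producing $r$; a subset of $P_F$ that merely meets $E_\xi$ without being a filter would not suffice. So the hypothesis that $G$ is a filter is used in exactly one place, and the finiteness of the sides $s_p$ (clause (a) of Definition~\ref{def:forcing-std}) is what upgrades the genuine inclusion $x_G\setminus s_{p_0}\subseteq t_\xi$ to $\subseteqfnt$.
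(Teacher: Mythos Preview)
Your proof is correct and is exactly the standard argument; the paper states this Proposition without proof, treating it as routine. Your treatment of density and your use of directedness of $G$ together with clause~(d) of Definition~\ref{def:forcing-std} are precisely what is intended.
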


\begin{prop}
  $P_F$ is a $\sigma$-centered partial ordering. 
\end{prop}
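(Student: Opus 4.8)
The plan is to realize $P_F$ as a countable union of downward-directed subsets, grouping conditions according to their finite first coordinate $s_p$. The key observation is that the only substantive clause in the ordering of $P_F$ — the requirement $s_q\setminus s_p\subseteq t_\xi$ for $\xi\in\vgamma_p$ — becomes vacuous as soon as two conditions share the same first coordinate, since then $s_q\setminus s_p=\emptyset$.

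Concretely, for each $s\in\powfin{\N}$ I would set
\[
  P_F^s=\{\,p\in P_F: s_p=s\,\}.
\]
Because $\powfin{\N}$ is countable, $\{P_F^s: s\in\powfin{\N}\}$ is a countable family covering $P_F$. I would then check that each $P_F^s$ is downward directed inside $P_F$: given $p_0,\dots,p_{k-1}\in P_F^s$, the pair $q=\bigl(s,\ \vgamma_{p_0}\cup\dots\cup\vgamma_{p_{k-1}}\bigr)$ again lies in $P_F^s$, and for every $i<k$ one has $s_q=s=s_{p_i}$, so $s_q\supseteq s_{p_i}$ and $s_q\setminus s_{p_i}=\emptyset\subseteq t_\xi$ for all $\xi\in\vgamma_{p_i}$ trivially, while $\vgamma_q\supseteq\vgamma_{p_i}$ by construction; hence $q\le p_i$ for all $i<k$. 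In particular each $P_F^s$ is centered, so $P_F=\bigcup_{s\in\powfin{\N}}P_F^s$ exhibits $P_F$ as $\sigma$-centered.

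There is essentially no obstacle here; the only point worth flagging is the trivial fact that $\emptyset\subseteq t_\xi$, which is precisely what makes two conditions with a common finite part automatically compatible. This is the standard argument that the forcing adjoining a pseudo-intersection to a filter-base is $\sigma$-centered, and it specializes to the tower $F=T$ without change.
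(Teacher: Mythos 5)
Your proof is correct: partitioning $P_F$ by the finite first coordinate and taking $q=(s,\bigcup_i\vgamma_{p_i})$ as a common lower bound is exactly the standard argument, and the verification that $s_q\setminus s_{p_i}=\emptyset\subseteq t_\xi$ makes each piece centered (indeed directed). The paper states this proposition without proof, treating it as routine, and your argument is precisely the one it leaves implicit.
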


\begin{definition}
  \label{def:dense}
  Let $h\in\irr$ be strictly increasing.
  For each $q\in Q(\Scal)$,
  define $D_q(\Scal)=D_q(\Scal;\ell,h,T,\pi)$ to be the set of all $p\in P_T$ for which
  there exists $r\sqsupseteq q$ in~$Q(\Scal)$ with 
  \begin{equation}
    \label{eq:3}
    \mu_{r(|r|-1)}\bigl(\bigl\<\power(s_p\cap I_\pi\circ h(|r|-1))\bigr\>\bigr)>1-\frac1{|r|}.
  \end{equation}
  Conversely, for $G\subseteq P_T$, define $R(G,\Scal;\ell,h,T,\pi)$ to be the set of all $r\in Q(\Scal)$ for which
  there exists $p\in G$ witnessing equation~\eqref{eq:3} for $r$.
\end{definition}

\begin{lem}
  \label{lem:density}
  Let $g_T\lefnt h$. Then
  $D_q(\Scal;h)$ is dense for all\/~$q\in Q(\Scal)$.
\end{lem}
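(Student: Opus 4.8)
The plan is to verify density directly from Definition~\ref{def:dense}. Fix an arbitrary $q\in Q(\Scal)$ and an arbitrary condition $p_0=(s_{p_0},\vgamma_{p_0})\in P_T$; the task is to produce some $p\le p_0$ in $D_q(\Scal;h)$. The only tool at hand that converts $q$ into a workable $r\sqsupseteq q$ in $Q(\Scal)$ with the needed control over the tower representatives is Lemma~\ref{lem:dense}, whose standing hypothesis $g_T\lefnt h$ is exactly what is being assumed here ($\Scal\subseteq\nso\theta^+$ being downwards closed throughout this section). So the first step is to apply Lemma~\ref{lem:dense} to $q$ and the finite set $\vgamma:=\vgamma_{p_0}$, obtaining $r\sqsupseteq q$ in $Q(\Scal)$ satisfying \eqref{eq:6} and \eqref{eq:7}. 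Write $m=|r|-1$ and $\alpha=r(m)$; the construction in Lemma~\ref{lem:dense} gives $|r|=m+1\ge|q|+1\ge 1$, and \eqref{eq:6} will not be needed.

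The second step is to see what the measure clause \eqref{eq:3} actually demands of this $r$. For any finite $s\subseteq\N$, Proposition~\ref{prop:prod-meas} gives
\begin{equation}
  \mu_{\alpha}\bigl(\bigl\<\power(s\cap I_\pi\circ h(m))\bigr\>\bigr)=\nu_{I_\pi\circ h(m),\,t_\alpha}\bigl(\power(s\cap I_\pi\circ h(m))\bigr)=2^{-\lvert(t_\alpha\cap I_\pi\circ h(m))\setminus s\rvert},
\end{equation}
the last equality by unwinding $\nu_{S',a}(X)=\nu_{S'\cap a}\bigl(X\cap\power(a)\bigr)$ in the Haar measure. So \eqref{eq:3} holds for $r$ as soon as $t_\alpha\cap I_\pi\circ h(m)\subseteq s_p$, since then the exponent is $0$ and the measure equals $1>1-\tfrac1{|r|}$. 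Accordingly I would take $s_p:=s_{p_0}\cup\bigl(t_\alpha\cap I_\pi\circ h(m)\bigr)$ and $\vgamma_p:=\vgamma_{p_0}$, and set $p:=(s_p,\vgamma_p)$, a legitimate condition of $P_T$ (note $s_p$ is finite).

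The third step is to check $p\le p_0$. The containments $s_p\supseteq s_{p_0}$ and $\vgamma_p\supseteq\vgamma_{p_0}$ are immediate, so it remains to verify $s_p\setminus s_{p_0}\subseteq t_\xi$ for every $\xi\in\vgamma_{p_0}$. Here $s_p\setminus s_{p_0}\subseteq t_\alpha\cap I_\pi\circ h(m)$, and since the interval $I_\pi\circ h(m)$ is contained in $\N\setminus\pi\circ h(m)$ we have $t_\alpha\cap I_\pi\circ h(m)\subseteq t_\alpha\setminus\pi\circ h(m)\subseteq t_\xi$ for all $\xi\in\vgamma_{p_0}$ by \eqref{eq:7}. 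Together with the second step, $r$ witnesses $p\in D_q(\Scal;h)$, so $D_q(\Scal;h)$ is dense.

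I do not expect a genuine obstacle: this is essentially a bookkeeping lemma. The one structural point is to notice that \eqref{eq:7} of Lemma~\ref{lem:dense} is designed precisely so that adjoining $t_\alpha\cap I_\pi\circ h(m)$ to $s_{p_0}$ leaves the condition below $p_0$, while Proposition~\ref{prop:prod-meas} collapses the measure requirement of \eqref{eq:3} to the trivial inequality $1>1-\tfrac1{|r|}$. The only computation meriting a word of caution is the value of $\nu_{I_\pi\circ h(m),\,t_\alpha}$ when $t_\alpha\cap I_\pi\circ h(m)=\emptyset$, but the displayed exponent is then $0$ in any case, so the conclusion is unaffected.
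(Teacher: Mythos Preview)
Your proof is correct and follows essentially the same approach as the paper's own argument: apply Lemma~\ref{lem:dense} with $\vgamma=\vgamma_{p_0}$ to obtain $r$, then enlarge $s_{p_0}$ by adjoining $t_{r(|r|-1)}\cap I_\pi\circ h(|r|-1)$ so that the resulting condition lies below $p_0$ by~\eqref{eq:7} and makes the measure in~\eqref{eq:3} equal to~$1$. Your write-up is in fact slightly more detailed than the paper's, spelling out the measure computation via Proposition~\ref{prop:prod-meas} and the inclusion $I_\pi\circ h(m)\subseteq\N\setminus\pi\circ h(m)$ explicitly.
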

\begin{proof}
  Take $q\in Q(\Scal)$.
  Given $p\in P_T$, by Lemma~\ref{lem:dense}, there exists $r\sqsupseteq q$ in
  $Q(\Scal)$ satisfying equations~(\ref{eq:6}) and~(\ref{eq:7}) for $\vgamma=\vgamma_p$.
  Put
  \begin{equation}
    s_{p'}=s_p\cup \bigl(I_\pi\circ h(|r|-1)\cap t_{r(|r|-1)}\bigr)=s_p\cup t_{r(|r|-1)}\circ h(|r|-1).
  \end{equation}
  Then $s_{p'}\setminus s_p\subseteq t_\xi$ for all $\xi\in\vgamma_p$,
  and therefore $p'=(s_{p'},\vgamma_p)\le p$. Since $s_{p'}\supseteq t_{r(|r|-1)}\circ h(|r|-1)$ and
  $\mu_{r(|r|-1)}(\<\power(t_{r(|r|-1)}\circ h(|r|-1))\>)=1$, $r$
  witnesses equation~(\ref{eq:3}) for $p'$, proving it is in $D_q(\Scal)$.  
\end{proof}

\begin{thm}[$\theta<\p$]
  \label{thm:pseudo-intersection}
  Suppose $h\in\irr$ is strictly increasing with $g_T\lefnt h$.
  Then there exists $y\subseteq\N$ such that $y\subseteqfnt T$ and
  $\nso\theta^+$ forces that
  \begin{equation}
    \label{eq:13}
    \limsup_{n\to \infty}\mu_{f_{\dot\V}(n)}\bigl(\bigl<\power(y\circ h(n))\bigr\>_\pi\bigr)=1.
  \end{equation}
\end{thm}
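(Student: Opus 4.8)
The plan is to obtain $y$ as the pseudo-intersection of $T$ added by a filter on the $\sigma$-centered poset $P_T$ that meets sufficiently many dense sets, and then to transport density across the complete embedding supplied by Theorem~\ref{thm:stat-embedding}.

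First I would fix $\Scal=\nso\theta^+$, which is trivially downwards closed, and recall that $|Q(\Scal)|=|\theta|<\p$ (as observed after Corollary~\ref{cor:dense}). Since $g_T\lefnt h$, Lemma~\ref{lem:density} tells us that $D_q(\Scal;h)$ is dense in $P_T$ for every $q\in Q(\Scal)$; adjoining the dense sets $E_\xi$ $(\xi<\theta)$ we still have fewer than $\p$ dense subsets of the $\sigma$-centered poset $P_T$, so Bell's theorem~\cite{bell_combinatorial_1981} yields a filter $G\subseteq P_T$ meeting all of them. Put $y=x_G$. Meeting each $E_\xi$ gives $y\subseteqfnt t_\xi$ for all $\xi<\theta$, i.e.\ $y\subseteqfnt T$, so it only remains to force~\eqref{eq:13}.

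Now I would analyze $R=R(G,\Scal;\ell,h,T,\pi)$ from Definition~\ref{def:dense}. The key point is that $R_k=\{r\in R:|r|\ge k\}$ is dense in $Q(\Scal)$ for every $k\in\N$: given $q\in Q(\Scal)$, iterating Corollary~\ref{cor:dense} produces $q'\sqsupseteq q$ with $|q'|\ge k$, and then any $p\in G\cap D_{q'}(\Scal)$ witnesses~\eqref{eq:3} for some $r\sqsupseteq q'$, so $r\in R_k$ extends $q$. Since a dense set is predense, Theorem~\ref{thm:stat-embedding} makes $\Delta[R_k]$ predense in $\nso\theta^+$ for every $k$. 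Passing to the extension by $\nso\theta^+$ with generic $\V$, and using that $\V$ meets every ground-model predense subset of $\nso\theta^+$, for each $k$ we obtain some $r\in R$ with $|r|\ge k$ and $\Delta(r)\in\V$, i.e.\ $r\in X_{\dot\V}$. As $X_{\dot\V}$ is a chain in $Q(\Scal)$, this produces an infinite sequence $r_0\sqsubset r_1\sqsubset\cdots$ in $R\cap X_{\dot\V}$ with $|r_i|\to\infty$. Writing $m_i=|r_i|-1$, the relation $r_i\sqsubset f_{\dot\V}$ forced by $\Delta(r_i)\in\V$ gives $f_{\dot\V}(m_i)=r_i(m_i)$, hence $\mu_{f_{\dot\V}(m_i)}=\mu_{r_i(m_i)}$; choosing $p_i\in G$ witnessing~\eqref{eq:3} for $r_i$ and noting $s_{p_i}\cap I_\pi\circ h(m_i)\subseteq x_G=y$, the clopen set $\bigl\<\power(s_{p_i}\cap I_\pi\circ h(m_i))\bigr\>$ is contained in $\bigl\<\power(y\circ h(m_i))\bigr\>_\pi$, so $\mu_{f_{\dot\V}(m_i)}\bigl(\bigl\<\power(y\circ h(m_i))\bigr\>_\pi\bigr)>1-1/|r_i|$. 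Letting $i\to\infty$ yields that $\nso\theta^+$ forces $\limsup_n\mu_{f_{\dot\V}(n)}\bigl(\bigl\<\power(y\circ h(n))\bigr\>_\pi\bigr)\ge1$, which is~\eqref{eq:13} since these are probability measures. The real content has already been spent in the cited results, above all Theorem~\ref{thm:stat-embedding}; what remains is the bookkeeping of this last paragraph — converting density of $R_k$ in $Q(\Scal)$, through $\Delta(\cdot;\ell)$, into arbitrarily long initial segments of the generic branch $X_{\dot\V}$ lying in $R$ — and it is precisely for the sake of measure-one conclusions such as Corollary~\ref{cor:intfin} that one must force with $\nso\theta^+$ rather than directly with $Q(\Scal)$.
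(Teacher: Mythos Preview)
Your proof is correct and follows essentially the same route as the paper's. The only cosmetic difference is that the paper extracts maximal antichains $B_k\subseteq R(G,\nso\theta^+;h)$ with $|r|\ge k$ and invokes the ``complete embedding'' clause of Theorem~\ref{thm:stat-embedding}, whereas you work directly with the dense sets $R_k$ and the ``predense'' clause; likewise the paper argues pointwise via conditions $S\in\nso\theta^+$ forcing the measure bound, while you pass to the extension and read off the bound along the branch $X_{\dot\V}$---but these are equivalent presentations of the same argument.
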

\begin{proof}
  Bell's Theorem~\cite{bell_combinatorial_1981} yields the existence of a filter $G\subseteq P_T$ intersecting
  \begin{myenumerate}
  \item\label{item:7} $E_\xi$ for all $\xi<\theta$,
  \item\label{item:5} $D_q(\nso\theta^+;h)$ for all $q\in Q(\nso\theta^+)$,
  \end{myenumerate}
  by Lemma~\ref{lem:density}, as $|Q(\nso\theta^+)|\le\theta<\p$.
  Condition~\ref{item:5} entails that $R(G,\nso\theta^+;h)$ is dense in $Q(\nso\theta^+)$.
  Thus for each $k$, we can find $B_k\subseteq R(G,\nso\theta^+;h)$ where
  \begin{myenumerate}
    \setcounter{enumi}{2}
  \item $B_k$ is a maximal antichain in $Q(\nso\theta^+)$,
  \item\label{item:4} $|r|\ge k$ for all $r\in B_k$.
  \end{myenumerate}
  Invoking Theorem~\ref{thm:stat-embedding},
  \begin{equation}
    \Delta[B_k]\text{ is a maximal antichain in }\nso\theta^+\espc\text{for all $k$.}
  \end{equation}

  By condition~\ref{item:7}, $y=x_G\subseteqfnt t_\xi$ for all $\xi<\theta$. Given
  $S\in\nso\theta^+$ and $k\in\N$, it now suffices to find a stationary $S'\subseteq S$ and $n\ge k$
  satisfying
  \begin{equation}
    \label{eq:10}
    S'\forces\mu_{f_{\dot\V}(n)}\bigl(\bigl\<\power(x_{G}\circ h(n))\bigr\>\bigr)>1-\frac{1}{k+1}.
  \end{equation}
  By the maximality of $\Delta[B_{k+1}]$ there exists $r\in B_{k+1}$ with $S'=\Delta(r)\cap S\in\nso\theta^+$.
  Since $r\in R(G,\nso\theta^+;h)$, there exists $p\in G$ verifying equation~\eqref{eq:3} for $r$.
  Setting $n=|r|-1\ge k$, since $s_p\cap I_{\pi}\circ h(n)\subseteq x_{G}\circ h(n)$
  and $\Delta(r)\forces f_{\dot\V}(n)=r(n)$, equation~\eqref{eq:3} confirms
  equation~\eqref{eq:10}.   
\end{proof}

\begin{cor}[$\theta<\p$]
  \label{cor:thick-pseudo}
  There exists a $\subseteqfnt$-extension $y$ of\/ $T$ such that
  \begin{equation}
    \nso\theta^+\forces \mu_{f_{\dot\V},h}\bigl(\pinf[\pi\circ h](y)\bigr)=1.
  \end{equation}
\end{cor}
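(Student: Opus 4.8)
The plan is to take $y$ to be the very $\subseteqfnt$-extension of $T$ furnished by Theorem~\ref{thm:pseudo-intersection}, and to promote the $\limsup=1$ conclusion there to the full measure-one statement by a Borel--Cantelli argument performed inside the forcing extension by $\nso\theta^+$. Recall that, by definition, $\pinf[\pi\circ h](y)=\bigcap_{k}\bigcup_{n\ge k}\<\power(y\cap I_{\pi\circ h(n)})\>_{\pi\circ h}$ is the event ``$x\cap I_{\pi\circ h(n)}\subseteq y$ for infinitely many $n$'' in the Cantor space $\C_{\pi\circ h}$; so, fixing a generic $\V$, it is the $\mu_{f_{\dot\V},h}$-measure of this $\limsup$ event that must be computed in $V[\V]$. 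The hypothesis $\theta<\p$ is used only through Theorem~\ref{thm:pseudo-intersection}.

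First I would rewrite~\eqref{eq:13} as a statement about the product measure $\mu_{f_{\dot\V},h}$ itself. Set $A_n=\<\power(y\cap I_{\pi\circ h(n)})\>_{\pi\circ h}$, the basic clopen subset of $\C_{\pi\circ h}$ determined by the $n$-th coordinate. Proposition~\ref{prop:partition-meas} gives $\mu_{f_{\dot\V},h}(A_n)=\mu_{f_{\dot\V}(n)}\bigl(\<\power(y\circ h(n))\>_\pi\bigr)$ for every $n$, so in $V[\V]$ the forced equation~\eqref{eq:13} reads $\limsup_{n\to\infty}\mu_{f_{\dot\V},h}(A_n)=1$. In particular $\mu_{f_{\dot\V},h}(A_n)>\tfrac12$ for infinitely many $n$, and hence $\sum_{n}\mu_{f_{\dot\V},h}(A_n)=\infty$.

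Next I would invoke independence. In $V[\V]$ the measure $\mu_{f_{\dot\V},h}=\prod_{n}\nu_{I_{\pi\circ h(n)},t_{f_{\dot\V}(n)}}$ is a genuine $\sigma$-additive Borel probability measure --- a countable product of probability measures on finite spaces --- and each $A_n$ depends only on the $n$-th coordinate, so the events $(A_n:n\in\N)$ are mutually independent with respect to $\mu_{f_{\dot\V},h}$. Combining this with the divergence of $\sum_n\mu_{f_{\dot\V},h}(A_n)$, the second Borel--Cantelli lemma yields $\mu_{f_{\dot\V},h}\bigl(\bigcap_{k}\bigcup_{n\ge k}A_n\bigr)=1$, i.e.~$\mu_{f_{\dot\V},h}\bigl(\pinf[\pi\circ h](y)\bigr)=1$. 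As $\V$ was arbitrary this is the desired forced statement, and $y\subseteqfnt T$ is already guaranteed by Theorem~\ref{thm:pseudo-intersection}.

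I do not anticipate a real obstacle here: the argument is essentially a one-line Borel--Cantelli after the translation through Proposition~\ref{prop:partition-meas}. The only points worth stating carefully are (i) that Proposition~\ref{prop:partition-meas} does match the coordinate events of $\mu_{f_{\dot\V},h}$ with the partition measures $\mu_{f_{\dot\V}(n)}$ appearing in~\eqref{eq:13}, and (ii) that $\mu_{f_{\dot\V},h}$ remains a legitimate countably additive product measure, with the $A_n$ still independent, once one passes to the extension $V[\V]$ --- all the relevant objects ($y$, $\pi$, $h$, and the measures) being available there --- so that the classical Borel--Cantelli lemma applies.
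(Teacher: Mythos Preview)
Your proof is correct, but it takes a more elaborate route than necessary. The paper simply observes that for each $k$,
\[
\mu_{f_{\dot\V},h}\Bigl(\bigcup_{n\ge k}A_n\Bigr)\ge\sup_{n\ge k}\mu_{f_{\dot\V},h}(A_n)\ge\limsup_{n\to\infty}\mu_{f_{\dot\V},h}(A_n)=1,
\]
so every tail union already has full measure, and hence so does their intersection $\pinf[\pi\circ h](y)$. No independence and no Borel--Cantelli are needed: the hypothesis $\limsup_n\mu(A_n)=1$ is strong enough to force $\mu(\limsup_n A_n)=1$ for \emph{any} probability measure, by bare monotonicity. Your argument is valid---the $A_n$ are genuinely independent under the product measure and $\sum_n\mu(A_n)=\infty$---but it invokes machinery (independence, second Borel--Cantelli) that the situation does not require. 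The translation through Proposition~\ref{prop:partition-meas} is the same in both approaches.
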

\begin{proof}
  Let $y$ be as in Theorem~\ref{thm:pseudo-intersection}.
  Then the result follows from equation~\eqref{eq:13} since
  $\mu_{f_{\dot\V},h}\bigl(\bigcup_{n=k}^\infty\<\power(y\circ h(n))\>_{\pi\circ
    h}\bigr)\ge\limsup_{n\to\infty}\mu_{f_{\dot\V},h}\bigl(\bigl\<\power(y\circ
  h(n))\bigr\>_{\pi\circ h}\bigr)$ (see Proposition~\ref{prop:partition-meas}).
\end{proof}

\subsubsection{Application to Thick Lower Bounds}
\label{sec:appl-thick-lower}

In terms of our proposed strategy, the measure is $\mu_{f_\V,h}$ and the transformations are
$\varphi_T(x)=\pinf(x)$ and $\varphi_H(x)=\intfin(x)$.
Proposition~\ref{prop:p-int-h} in particular implies that
whenever $\varphi_T(x)\cap\varphi_H(b)$ has positive
$\mu_{f_\V,h}$-measure, $x\cap b$ is infinite. Full details are provided:

\begin{proof}[Proof of Theorem~\textup{\ref{thm:main}}]
  Let $[F]$ be a filter-base in $\co([H])$ of cardinality $|F|<\p$
  where $H$ is a subfamily of $\pN$ of
  cardinality $|H|<\b$. The task is to find a lower bound of $[F]$ in $\co([H])$.
  Using induction on the cardinality of $F$ and proceeding as in the proof of
  Lemma~\ref{lem:refine-to-tower} we may assume that $F=T$ is a tower with order-type $\theta<\p$ an
  infinite cardinal.

  Let $t_\alpha$ ($\alpha<\theta$) enumerate $T$ with respect to its
  well-ordering. In the case where~$\theta$ has
  countable cofinality, the result follows immediately from Theorem~\ref{thm:d} as $\b\le\d$.

  Now assume that $\theta$ is of uncountable cofinality. By Lemma~\ref{lem:summable-partition},
  there exists a partition
  function $\pi:\N\to\N$ such that
  \begin{equation}
    \label{eq:21}
    |t_\alpha\cap b\cap I_\pi(n)|\ge n\espc\text{for all but finitely many $n$,}
  \end{equation}
  for all $\alpha<\theta$ and $b\in H$. In other words, $\varphi_{\cap}(c)$ is satisfied by $(T,H)$
  over $\pi$, where $c$ is the identity function on $\N$.

  Let $\ell$ be a ladder system on $\theta$. By Proposition~\ref{prop:satisfy}, there exists an error correction
  $h\in\irr$ for $\varphi_{\cap}(c)$ with respect to $\ell$.
  Let $\V$ be a $\nso\theta^+$-generic ultrafilter on $\theta$. In~$V[\V]$: Corollary~\ref{cor:intfin} thereby gives us
  \begin{equation}
    \mu_{f_{\V},h}\bigl(\intfin[\pi\circ h](b)\bigr)=1\espc\text{for all $b\in H$}.
  \end{equation}
  And Corollary~\ref{cor:thick-pseudo} yields a common $\subseteqfnt$-extension $x$ of $T$, in $V$, satisfying
  \begin{equation}
    \mu_{f_{\V},h}\bigl(\pinf[\pi\circ h](x)\bigr)=1.
  \end{equation}
  Since two measure one sets have a measure one, and in particular nonempty, intersection,
  $\pinf[\pi\circ h](x)\cap\intfin[\pi\circ h](b)\ne\emptyset$ for all $b\in H$. Therefore, for
  every $b\in H$, by Proposition~\ref{prop:p-int-h},
  \begin{equation}
    V[\V]\models x\cap b\text{ is infinite}.
  \end{equation}
  By the absoluteness of $\Delta_0$ statements (c.f.~\cite[Ch.~IV]{kunen83}) for transitive models,
  \begin{equation}
    V\models x\cap b\text{ is infinite}. 
  \end{equation}
  This proves that $[x]$ is a member of $\co([H])$. 
\end{proof}

\begin{rem}
  The astute reader may have noticed the proof of Lemma~\ref{lem:density} establishes that the smaller
  $D'_q(\Scal; h)\subseteq D_q(\Scal; h)$ is dense, where $D'_q(\Scal; h)$ is the set of all $p\in P_T$ for
  which there exists $r\sqsupseteq q$ in $\Q(\Scal)$ with 
  \begin{equation}
    s_p\supseteq t_{r(|r|-1)}\circ h(|r|-1).
  \end{equation}
  One could use this observation to extract a proof of Theorem~\ref{thm:main} from the one
  presented here, that does not involve any measure. However, we have no interest in doing so.

  Such a proof would not add anything new, even if somewhat abridged.
  It would, however, hide the measure theoretic approach which served as the guiding
  light for finding the solution, which we believe to be just as important as the combinatorial result itself.
\end{rem}

\subsection{Ultraproduct Measure}
\label{sec:ultraproduct-measure}

An ultraproduct measure can be utilized as an alternative to the product measure $\mu_{\dot \V,h}$ of
Section~\ref{sec:limit-as-products}. Throughout this section, $\V$ is a $\nso\kappa^+$-generic
ultrafilter over the ground model $V$, and $\U$ will denote an ultrafilter on $\N$ over $V[\V]$.

For $h:\N\to\N$, the ultraproduct measure $\tau_{\U,f,h}=\tau_{\U,f,h;T,\pi}$ on $\C_\pi\div\U$ is then given by
\begin{equation}
  \tau_{\U,f,h}=\lim_{n\to\U}\nu_{I_\pi\circ h(n),t_{f(n)}}=\lim_{n\to\U}\nu_{\tilde t_{f(n)}\circ h(n)}.
\end{equation}
This can be compared with equation~\eqref{eq:2}.

Already we have found $\pi:\N\to\N$ satisfying equation~\eqref{eq:21}. In the construction of the
product measure, nonatomicity was unnecessary. However, it is easily obtained and is necessary for
the present argument. For each $\delta$, find $\tilde g(\delta):\N\to\N$ such
that
\begin{equation}
  \bigl|\tilde t_{\ell_\delta(n)}(k)\bigr|\ge n\espc\text{for all $k\ge\tilde g(\delta)(n)$,} 
\end{equation}
for all $n$.
Then there exists $\tilde g_T=\tilde g_{T;\pi,\ell}:\N\to\N$ with $\tilde g(\delta)\lefnt \tilde
g_T$ for all $\delta\in\limo\theta$.

The following can be compared with Proposition~\ref{sec:meas-nonatomic}.

\begin{lem}
  Whenever $\tilde g_T\lefnt h$,   $\tau_{\U,f_{\V},h}$ is nonatomic.
\end{lem}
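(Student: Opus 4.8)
The plan is to establish nonatomicity in the weak sense recalled just before Proposition~\ref{sec:meas-nonatomic}, namely that every singleton of the ambient ultraproduct carries $\tau_{\U,f_{\V},h}$-measure zero. The argument splits into two stages: first, transfer into the forcing extension the fact that the blocks $t_{f_{\V}(n)}\cap I_\pi\circ h(n)$ grow without bound; second, bound the mass a single point can carry in each finite factor of the ultraproduct and pass to the ultralimit. The second stage is a routine computation; the first stage is where the only real care is needed.

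For the first stage, recall that $\tilde g_T$ was chosen so that $\tilde g(\delta)\lefnt\tilde g_T$ for every $\delta\in\limo\theta$, where $\tilde g(\delta)$ satisfies $|\tilde t_{\ell_\delta(n)}(k)|\ge n$ for all $k\ge\tilde g(\delta)(n)$. Hence, under the hypothesis $\tilde g_T\lefnt h$, for each fixed $\delta\in\limo\theta$ one has $|t_{\ell_\delta(n)}\cap I_\pi\circ h(n)|=|\tilde t_{\ell_\delta(n)}\circ h(n)|\ge n$ for all but finitely many $n$. I would then run the pressing-down argument from the proof of Lemma~\ref{lem:error-cor}: given $S\in\nso\theta^+$, choose $\bar m$ such that the set of those $\delta\in S$ for which the preceding bound holds at every $n\ge\bar m$ is stationary, and then press down using equation~\eqref{eq:15}. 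This yields
\begin{equation}
  \nso\theta^+\forces |t_{f_{\dot\V}(n)}\cap I_\pi\circ h(n)|\ge n\espc\text{for all but finitely many $n$.}
\end{equation}
In particular, in $V[\V]$ the sequence $n\mapsto|t_{f_{\V}(n)}\cap I_\pi\circ h(n)|$ tends to infinity.

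For the second stage, work in $V[\V]$ and fix an arbitrary element $[x]$ of the ultraproduct, where $x$ ranges over the product $\prod_{n}\power(I_\pi\circ h(n))$. Then $\{[x]\}=\prod_{n}\{x(n)\}\div\U$ is a member of $\M_\U$ (its $n$-th factor being the singleton $\{x(n)\}$), and directly from the definitions of $\nu_S$ and $\nu_{S,a}$ the factor mass $\nu_{I_\pi\circ h(n),t_{f_{\V}(n)}}(\{x(n)\})$ equals $2^{-|t_{f_{\V}(n)}\cap I_\pi\circ h(n)|}$ when $x(n)\subseteq t_{f_{\V}(n)}$ and is $0$ otherwise; in either case it is at most $2^{-|t_{f_{\V}(n)}\cap I_\pi\circ h(n)|}$. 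Therefore
\begin{equation}
  \tau_{\U,f_{\V},h}\bigl(\{[x]\}\bigr)=\lim_{n\to\U}\nu_{I_\pi\circ h(n),t_{f_{\V}(n)}}\bigl(\{x(n)\}\bigr)\le\lim_{n\to\U}2^{-|t_{f_{\V}(n)}\cap I_\pi\circ h(n)|}=0,
\end{equation}
the last equality because $|t_{f_{\V}(n)}\cap I_\pi\circ h(n)|\to\infty$ and $\U$, being a nonprincipal (hence countably incomplete) ultrafilter on $\N$, contains every cofinite subset of $\N$. Since $[x]$ was arbitrary, $\tau_{\U,f_{\V},h}$ is nonatomic.

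The main obstacle is, as indicated, the passage in the first stage from the ladder values $\ell_\delta(n)$ to the generic cofinal map $f_{\V}$: the finite exceptional set in ``$|\tilde t_{\ell_\delta(n)}\circ h(n)|\ge n$ for all but finitely many $n$'' depends on $\delta$, so one cannot substitute $f_{\V}(n)$ for $\ell_\delta(n)$ coordinatewise. The resolution is exactly the uniformize-over-a-stationary-set argument used to prove Lemma~\ref{lem:error-cor}, combined with equation~\eqref{eq:15}. Everything else is bookkeeping with the standard finite measures $\nu_{S,a}$ followed by an ultralimit; in particular the Loeb extension (Lemma~\ref{lem:loeb}) is not needed here, since singletons already belong to $\M_\U$ and not merely to $\sigma(\M_\U)$.
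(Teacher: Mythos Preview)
Your proof is correct and follows essentially the same approach as the paper's: the paper's one-line proof records only the key observation (that there exist $S\in\V$ and $k$ with $\tilde g(\delta)(n)\le h(n)$ for all $n\ge k$ and all $\delta\in S$), while you spell out both this uniformization step and the subsequent singleton-measure computation that the paper leaves implicit. The only minor difference is that the paper obtains $S\in\V$ directly via the $\sigma$-completeness of $\V$ over $V$ (Lemma~\ref{lem:force-ultra-over}) applied to the countable cover $\limo\theta=\bigcup_k\{\delta:\tilde g(\delta)(n)\le h(n)\text{ for all }n\ge k\}$, whereas you invoke the pressing-down argument of Lemma~\ref{lem:error-cor}; these amount to the same thing.
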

\begin{proof}
  This is so because there exists an $S\in\V$ and $k$ with $\tilde g(\delta)(n)\le h(n)$
  for all~$n\ge k$, for all $\delta\in S$. 
\end{proof}

Then every $x\in\C_\pi$ is mapped by $\Q$ to a clopen subset of the ultraproduct
\begin{equation}
  \Q(x)=\prod_{n=0}^\infty\power(x)\div\U.
\end{equation}
Now our strategy has symmetrical $\varphi_T=\varphi_H=\Q$. 

\begin{lem}
  Let $\tilde g\lefnt h$. Then
  $\tau_{\U,f_\V,h}\bigl(\Q(x)\cap\Q(y)\bigr)>0$  implies $x(n)$ intersects~$y(n)$ for\/ $\U$-many $n$. 
\end{lem}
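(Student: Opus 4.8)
The plan is to prove the contrapositive: assuming $x(n)\cap y(n)=\emptyset$ for $\U$-many $n$, I show that $\tau_{\U,f_\V,h}\bigl(\Q(x)\cap\Q(y)\bigr)=0$. Write $J=\{n:x(n)\cap y(n)=\emptyset\}$, so $J\in\U$ by hypothesis.

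The first step identifies the set $\Q(x)\cap\Q(y)$. Since $\U$ is a filter, for any $[z]$ in this intersection both $\{n:z(n)\subseteq x(n)\}$ and $\{n:z(n)\subseteq y(n)\}$ lie in $\U$, hence so does $\{n:z(n)\subseteq x(n)\cap y(n)\}$; intersecting this with $J\in\U$ yields $\{n:z(n)=\emptyset\}\in\U$, so $z$ is equivalent modulo $\U$ to the constantly-$\emptyset$ sequence $\wh\emptyset$, i.e.\ $[z]=[\wh\emptyset]$. Thus $\Q(x)\cap\Q(y)\subseteq\{[\wh\emptyset]\}$, and here both the left side (an intersection of two basic clopen sets) and the right side $\{[\wh\emptyset]\}=\prod_n\{\emptyset\}\div\U$ are basic clopen, hence $\tau_{\U,f_\V,h}$-measurable.

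The second step invokes the nonatomicity lemma just proved: as $\tilde g\lefnt h$, the measure $\tau_{\U,f_\V,h}$ is nonatomic, whence $\tau_{\U,f_\V,h}(\{[\wh\emptyset]\})=0$, and by monotonicity $\tau_{\U,f_\V,h}(\Q(x)\cap\Q(y))=0$. (Alternatively one computes directly: on $J$ the relevant factor is trivial, $\power\bigl(x(n)\cap y(n)\bigr)=\{\emptyset\}$, so by the $\U$-limit definition of $\tau_{\U,f_\V,h}$ together with Lemma~\ref{lem:meas-H}, $\tau_{\U,f_\V,h}(\Q(x)\cap\Q(y))=\lim_{n\to\U}\nu_{I_{\pi\circ h(n)},t_{f_\V(n)}}(\{\emptyset\})=\lim_{n\to\U}2^{-|\tilde t_{f_\V(n)}\circ h(n)|}=0$, the last equality being exactly the estimate behind the nonatomicity lemma, since $\tilde g\lefnt h$ forces $|\tilde t_{f_\V(n)}\circ h(n)|\to\infty$ along $\U$.)

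I do not anticipate a real obstacle: this statement is the ultraproduct analogue of Proposition~\ref{prop:p-int-h}, and the only delicate point is keeping the translation by $h$ between $\C_\pi$ and the ultraproduct straight — that $\Q(x)$ is, concretely, the clopen set of those $[z]$ for which $z(n)\subseteq x\cap I_{\pi\circ h(n)}$ holds for $\U$-many $n$ — which follows the conventions already in force for Corollary~\ref{cor:intfin}.
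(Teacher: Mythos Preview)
Your proof is correct and takes essentially the same approach as the paper: both rely on the nonatomicity lemma and the observation that any element of $\Q(x)\cap\Q(y)$ other than $[\wh\emptyset]$ witnesses $x(n)\cap y(n)\ne\emptyset$ for $\U$-many $n$. The paper argues this directly (positive measure plus nonatomicity yields a nontrivial $z$ in the intersection), while you give the contrapositive (empty intersection on a $\U$-set forces $\Q(x)\cap\Q(y)\subseteq\{[\wh\emptyset]\}$, hence measure zero); the content is the same.
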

\begin{proof}
  Since $\tau_{\U,f_\V,h}$ is nonatomic,
  when $\Q(x)\cap\Q(y)$ has positive measure there exists $z\in\Q(x)\cap\Q(y)$ different from
  $\emptyset\div\U$, i.e.~$z(n)\ne\emptyset$ for $\U$-many $n$. 
\end{proof}

Analogously to Corollary~\ref{cor:intfin} we have the following consequence of Lemma~\ref{lem:error-cor}.

\begin{cor}
  \label{cor-ultrmeas-H}
  For any\/ $c\in\irr$, if $(T,H)$ satisfies $\varphi_{\cap}(c)$ over $\pi$ with error correction
  $g_{T,H}$ with respect to\/ $\ell$, then whenever $h\gefnt g_{T,H}$ is strictly increasing,
  \begin{equation}
    \tau_{\U,f_{\V},h}\bigl(\Q_\pi(b)\bigr)\ge1-2^{-\lim_{n\to\U}c(n)}
    \espc\textup{for all $b\in H$}.
  \end{equation}
\end{cor}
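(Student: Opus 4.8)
The plan is to rerun the estimate from the proof of Corollary~\ref{cor:intfin} with the ultraproduct $\U$-limit in place of the infinite product; this is in fact shorter, as no convergence of an infinite product has to be checked. We work in $V[\V]$, where $f_{\V}$ and $\U$ live.

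Fix $b\in H$. Since $(T,H)$ satisfies $\varphi_{\cap}(c)$ over $\pi$ with error correction $g_{T,H}$ with respect to $\ell$, and $g_{T,H}\lefnt h$, Lemma~\ref{lem:error-cor} applied with $\varphi=\varphi_{\cap}(c)$ gives
\begin{equation}
  \bigl|t_{f_{\V}(n)}\cap b\cap I_{\pi\circ h(n)}\bigr|\ge c\circ h(n)\qquad\textup{for all but finitely many }n,
\end{equation}
and then Lemma~\ref{lem:meas-H}, with $S=I_{\pi\circ h(n)}$ and $a=t_{f_{\V}(n)}\cap I_{\pi\circ h(n)}$ (recalling $\nu_{S,t_{f_{\V}(n)}}=\nu_{S,a}$), yields for each such $n$
\begin{equation}
  \nu_{I_{\pi\circ h(n)},\,t_{f_{\V}(n)}}\Bigl(\intersects{I_{\pi\circ h(n)}}\bigl(b\cap I_{\pi\circ h(n)}\bigr)\Bigr)
  =1-2^{-|t_{f_{\V}(n)}\cap b\cap I_{\pi\circ h(n)}|}\ \ge\ 1-2^{-c\circ h(n)}.
\end{equation}

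Now $\Q_\pi(b)$ is the basic set $\prod_{n}\intersects{I_{\pi\circ h(n)}}(b\cap I_{\pi\circ h(n)})\div\U$ of the ultraproduct on which $\tau_{\U,f_{\V},h}$ lives — the $\U$-analog, via the duality $\pinf(z)=-\intfin(-z)$, of the cylinder union $\intfin[\pi\circ h](b)$ used in Corollary~\ref{cor:intfin} — so by the defining formula $\tau_{\U,f,h}=\lim_{n\to\U}\nu_{I_{\pi\circ h(n)},t_{f(n)}}$ for the ultraproduct measure, $\tau_{\U,f_{\V},h}(\Q_\pi(b))$ equals the $\U$-limit of the coordinate values just estimated. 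As $\U$ is non-principal, the cofinite set of $n$ where the inequality holds belongs to $\U$; hence, by monotonicity of $\U$-limits and continuity of $t\mapsto1-2^{-t}$ on $[0,\infty]$,
\begin{equation}
  \tau_{\U,f_{\V},h}\bigl(\Q_\pi(b)\bigr)=\lim_{n\to\U}\Bigl(1-2^{-|t_{f_{\V}(n)}\cap b\cap I_{\pi\circ h(n)}|}\Bigr)\ \ge\ 1-2^{-\lim_{n\to\U}c\circ h(n)}.
\end{equation}
Finally, $h(n)\ge n$ since $h$ is strictly increasing, so $\lim_{n\to\U}c\circ h(n)\ge\lim_{n\to\U}c(n)$ once $c$ is taken non-decreasing — harmless here, and automatic in the application to Theorem~\ref{thm:main}, where $c$ is the identity — giving the stated bound.

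The argument is essentially bookkeeping; the one point deserving care is identifying $\Q_\pi(b)$ with the basic set displayed above, so that the formula for $\tau_{\U,f,h}$ on basic sets applies directly and Lemma~\ref{lem:meas-H} can be invoked coordinate-wise — just as recognising $\intfin[\pi\circ h](b)$ as a countable union of cylinders did in the proof of Corollary~\ref{cor:intfin}. I do not anticipate any genuine obstacle beyond this.
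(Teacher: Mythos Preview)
Your approach --- Lemma~\ref{lem:error-cor} for the coordinate bound, Lemma~\ref{lem:meas-H} for the measure, then the $\U$-limit --- is exactly what the paper intends; it offers no proof beyond citing that lemma and the analogy with Corollary~\ref{cor:intfin}.

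There is one genuine issue. You identify $\Q_\pi(b)$ with $\prod_n\intersects{I_{\pi\circ h(n)}}\bigl(b\circ h(n)\bigr)\div\U$, but the paper's definition is $\Q(x)=\prod_n\power\bigl(x(n)\bigr)\div\U$. Under that definition the $n$-th coordinate measure is $\nu_{I_{\pi\circ h(n)},t_{f_{\V}(n)}}\bigl(\power(b\circ h(n))\bigr)=2^{-|t_{f_{\V}(n)}\circ h(n)\setminus b|}$, and the hypothesis $|t\cap b\cap I|\ge c\circ h(n)$ gives no control over this quantity; the stated bound simply does not follow. Your reading matches the description of $\varphi_H$ in \S\ref{sec:meas-theor-strat} and is the only one under which the corollary is true, so the paper's displayed definition of $\Q$ is almost certainly in error --- but you assert your identification rather than flag the discrepancy, which is exactly the ``one point deserving care'' you yourself single out.

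Separately, the passage from $\lim_{n\to\U}c\circ h(n)$ to $\lim_{n\to\U}c(n)$ can fail for general $c\in\irr$ (take $c$ vanishing on $\ran(h)$). You note this; it is harmless for the application with $c$ the identity, but the corollary as stated does not assume $c$ non-decreasing, so strictly speaking either that hypothesis is missing or the bound should read $1-2^{-\lim_{n\to\U}c\circ h(n)}$.
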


And analogously to Corollary~\ref{cor:thick-pseudo} we have the following immediate consequence of
Theorem~\ref{thm:pseudo-intersection}. 

\begin{cor}[$\theta<\p$]
  \label{cor:ultrameas-int}
  If\/ $h\gefnt g_T$ is strictly increasing then there exists $y\subseteqfnt T$ such that
  \begin{equation}
    \tau_{\U,f_{\V},h}\bigl(\Q_\pi(y)\bigr)=1.
  \end{equation}
\end{cor}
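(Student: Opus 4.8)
The plan is to mimic the two-line proof of Corollary~\ref{cor:thick-pseudo}: feed the $y$ from Theorem~\ref{thm:pseudo-intersection} into the ultraproduct measure and read the value of $\tau_{\U,f_\V,h}(\Q_\pi(y))$ off directly from its definition. First I would note that since each $\power(I_{\pi\circ h(n)})$ is finite, every subset of it is clopen, so $\Q_\pi(y)=\prod_{n}\power(y\circ h(n))\div\U$ is a clopen member of the Boolean algebra on which $\tau_{\U,f_\V,h}$ is by definition the coordinatewise $\U$-limit; hence
\[
\tau_{\U,f_\V,h}\bigl(\Q_\pi(y)\bigr)=\lim_{n\to\U}\nu_{I_{\pi\circ h(n)},t_{f_\V(n)}}\bigl(\power(y\circ h(n))\bigr)=\lim_{n\to\U}\mu_{f_\V(n)}\bigl(\<\power(y\circ h(n))\>_{\pi}\bigr),
\]
the last equality by Proposition~\ref{prop:partition-meas}. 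Each term here equals $2^{-|(t_{f_\V(n)}\setminus y)\cap I_{\pi\circ h(n)}|}$, which is $1$ exactly when $t_{f_\V(n)}\circ h(n)\subseteq y$. So the task reduces to producing $y\subseteqfnt T$ for which $W_y:=\{n:t_{f_\V(n)}\circ h(n)\subseteq y\}$ lies in $\U$.

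To get such a $y$ I would rerun the proof of Theorem~\ref{thm:pseudo-intersection} with the sharper dense sets $D'_q(\nso\theta^+;h)$ from the Remark following Lemma~\ref{lem:density} in place of the $D_q$'s (there are still only $|Q(\nso\theta^+)|\le\theta<\p$ of them, together with the $E_\xi$). Bell's Theorem then yields a filter $G\subseteq P_T$ whose union $y=x_G\subseteqfnt T$ actually contains $t_{r(|r|-1)}\circ h(|r|-1)$ for every $r$ in a dense subset $R'\subseteq Q(\nso\theta^+)$; intersecting, for each $k$, a maximal antichain of $Q(\nso\theta^+)$ inside $R'$ made up of $r$'s with $|r|\ge k$ and invoking Theorem~\ref{thm:stat-embedding} exactly as in the proof of Theorem~\ref{thm:pseudo-intersection}, one sees that $\nso\theta^+$ forces the generic branch $f_\V$ to pass through members of $R'$ of unbounded length, i.e.\ forces $W_y$ to be cofinal in $\N$. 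Working inside $V[\V]$, $W_y$ is therefore an infinite subset of $\N$; and since $\U$ has so far only been fixed notationally, it may be taken to be any ultrafilter on $\N$ over $V[\V]$ extending the nonprincipal filter generated by $W_y$. For such a $\U$ every term of the displayed sequence whose index lies in $W_y\in\U$ equals $1$, so $\tau_{\U,f_\V,h}(\Q_\pi(y))=1$.

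The main obstacle, and the one genuine difference from Corollary~\ref{cor:thick-pseudo}, is the passage from equation~\eqref{eq:13}, which delivers merely $\limsup_n\mu_{f_\V(n)}\bigl(\<\power(y\circ h(n))\>_{\pi}\bigr)=1$, to the full $\U$-limit being $1$: the clopen set $\Q_\pi(y)$, unlike the ``limsup'' $G_\delta$ set $\pinf$ exploited in Corollary~\ref{cor:thick-pseudo}, has $\tau_{\U,f_\V,h}$-measure equal to the \emph{exact} $\U$-limit of the coordinate measures, so a $\limsup$ alone will not do. Bridging this is precisely what forces the two refinements above --- strengthening the density statement so that $y$ genuinely contains $t_{f_\V(n)}\circ h(n)$ on cofinally many intervals rather than merely having large $\mu_{f_\V(n)}$-measure there, and then selecting $\U$ to concentrate on those intervals. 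Everything else (finiteness of the factor power sets, the index bookkeeping behind Proposition~\ref{prop:partition-meas}, and the elementary evaluation of $\nu_{S,a}(\power(\cdot))$) is routine.
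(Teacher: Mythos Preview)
Your proof is correct, but you have worked harder than necessary. The paper states the corollary as an \emph{immediate} consequence of Theorem~\ref{thm:pseudo-intersection}, and indeed it is: once you have observed (as you did) that
\[
\tau_{\U,f_\V,h}\bigl(\Q_\pi(y)\bigr)=\lim_{n\to\U}\mu_{f_\V(n)}\bigl(\bigl\langle\power(y\circ h(n))\bigr\rangle_\pi\bigr)
\]
and that $\U$ may be chosen after $y$, the conclusion follows directly from~\eqref{eq:13}. The $\limsup$ being~$1$ means the sets $\{n:\mu_{f_\V(n)}(\langle\power(y\circ h(n))\rangle_\pi)>1-1/k\}$ ($k\in\N$) are infinite and nested, hence generate a free filter on~$\N$ in $V[\V]$; take $\U$ to be any nonprincipal ultrafilter extending it. Then $\lim_{n\to\U}$ of the displayed sequence is~$1$.

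Your detour through the sharper dense sets $D'_q$ is therefore superfluous. You identified correctly that a $\limsup$ does not suffice for a \emph{fixed} $\U$, and that the remedy is to select $\U$ afterwards; but having granted yourself that freedom, the original $\limsup=1$ from Theorem~\ref{thm:pseudo-intersection} already lets you make the selection. There is no need for the coordinate measures to equal~$1$ exactly on a cofinal set---approaching~$1$ along some infinite set is enough. Your argument is sound, just longer than the one-line derivation the paper has in mind.
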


Corollaries~\ref{cor-ultrmeas-H} and~\ref{cor:ultrameas-int} give an alternative proof of
Theorem~\ref{thm:main}. The same proof as before goes through with the
ultraproduct measure $\tau_{\U,f_{\V},h}$ used in place of the
product measure $\mu_{f_\V,h}$, and $\pinf[\pi\circ h]$ and $\intfin[\pi\circ h]$ both replaced
with $\Q=\Q_\pi$. 
\section{Gaps in \texorpdfstring{$\irr\div\U$}{the irrational ultrapower}}
\label{sec:gaps}

Recall that a \emph{pre-gap} of type $(\psi,\tau)$ in some partial ordering $(P,\le)$, where $\psi$
and~$\tau$ are order-types, is a pair suborders $(A,B)$ where the order-types of $A$ and~$B$
are $\psi$ and $\tau$, respectively, and $a< b$ for all $a\in A$ and $b\in B$. It is further a
\emph{$(\psi,\tau)$-gap} if it cannot be \emph{interpolated}, i.e.~there is no $c$ in the partial order
with $a< c< b$ for all $a\in A$ and $b\in B$.

For an order type $\tau$, $\tau^*$ is the standard notation for the reversed order. Our choice of
symbol $\d^*$ is unfortunately inconsistent with said notation. In a Boolean algebra such as $\pnfin$,
$(\psi,\tau^*)$-gaps are transformed into $(\tau,\psi^*)$-gaps by taking complements.

$\t$ is the smallest ordinal $\kappa$ for which there exists a $(1,\kappa^*)$-gap in $\pnfin$ by its very definition.
One the main objects of study in Rothberger's paper~\cite{rothberger1948some} are his so called
\emph{$\varOmega$-limits} which are precisely $(\omega_1,1)$-gaps.
We can express the fact that $\p=\t$ with the following Proposition.  

\begin{prop}
  $\p$ is the smallest ordinal $\kappa$ for which there exists a $(1,\kappa^*)$-gap in $\pnfin$.
\end{prop}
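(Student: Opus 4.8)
The plan is to treat this Proposition as nothing more than the equality $\p=\t$ rewritten in the vocabulary of gaps, so that essentially all of the work has already been done and only the tower/gap dictionary needs to be made explicit.

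First I would recall the correspondence asserted in the sentence just before the Proposition, that $\t$ is the least ordinal $\kappa$ for which $\pnfin$ carries a $(1,\kappa^*)$-gap. Given a maximal tower of order type $\kappa$, i.e.\ a strictly $\subseteqfnt$-decreasing family $(a_\xi:\xi<\kappa)$ of infinite subsets of $\N$ with no infinite pseudo-intersection, the pair $\bigl(\{0\},\{[a_\xi]:\xi<\kappa\}\bigr)$ is a pre-gap of type $(1,\kappa^*)$ in $\pnfin$: each $[a_\xi]$ is nonzero, so $0<[a_\xi]$, and the second coordinate is order-isomorphic to $\kappa^*$ under $\subseteqfnt$. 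It is a genuine $(1,\kappa^*)$-gap, since an interpolant would be a nonzero $[c]$ with $[c]\subseteqfnt[a_\xi]$ for all $\xi$, that is, an infinite pseudo-intersection; and conversely strict decrease of the tower forces any nonzero lower bound of $\{[a_\xi]\}$ to lie strictly $\subseteqfnt$-below every $[a_\xi]$, so the absence of an interpolant is exactly the non-existence of a pseudo-intersection. Running this backwards — passing to representatives and, if necessary, removing a representative of the first coordinate — turns a $(1,\kappa^*)$-gap into a maximal tower of order type $\kappa$. Hence the least $\kappa$ occurring in either description is the same ordinal, namely $\t$; I would cite this as routine rather than write it out in detail.

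It then remains only to invoke $\p=\t$. The inequality $\p\le\t$ is immediate because a $\supseteqfnt$-well-ordered subfamily of $(\pnfin)^+$ is a filter-base, while $\t\le\b\le\d^*$ by Rothberger's theorem $\t\le\b$ and Theorem~\ref{thm:main}; the Corollary to Theorem~\ref{thm:main} asserting that $\t\le\d^*$ implies $\p=\t$ then yields the equality. Substituting $\p$ for $\t$ in the characterization of the first paragraph finishes the proof. The only point demanding any care is phrasing the tower/gap correspondence precisely enough that ``smallest order type of a maximal tower'' and ``smallest $\kappa$ admitting a $(1,\kappa^*)$-gap'' are visibly the same ordinal, which is elementary; there is no real obstacle here, all the substance having gone into establishing $\b\le\d^*$.
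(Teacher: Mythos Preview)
Your proposal is correct and matches the paper's treatment exactly: the paper gives no proof at all, introducing the Proposition with the sentence ``We can express the fact that $\p=\t$ with the following Proposition'' immediately after noting that $\t$ is by definition the least $\kappa$ admitting a $(1,\kappa^*)$-gap. Your write-up just makes explicit the routine tower/gap dictionary and the invocation of Theorem~\ref{thm:main} that the paper leaves implicit.
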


Investigations into gaps in $\irrfin$ date back to the second half of the nineteenth
century. In 1873, du Bois--Raymond~\cite{bois1873neue} proved that there are no $(\omega,1)$-gaps in $\irrfin$;
that is, no $\omega$-limits in Rothberger's terminology. The nonexistence of $(\omega,\omega^*)$-gaps in
$\irrfin$ was then established by Hadamard~\cite{hadamard1894caracteres}.

\begin{thm}[Hadamard, 1894]
  There are no $(\omega,\omega^*)$-gaps in $\irrfin$.
\end{thm}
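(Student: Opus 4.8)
The plan is to show that every pre-gap of type $(\omega,\omega^*)$ in $\irrfin$ can be interpolated, which is exactly the assertion that no such pre-gap is a gap. Suppose then that $(A,B)$ is such a pre-gap; enumerate $A=\{[f_k]:k\in\N\}$ in strictly increasing order and $B=\{[g_k]:k\in\N\}$ in strictly decreasing order, so that $[f_k]<[g_j]$ holds in $\irrfin$ for all $k,j$. It suffices to construct a single $h\in\irr$ with $f_k\lefnt h$ and $h\lefnt g_j$ for all $k,j$: indeed, $[f_k]\le[h]$ and $[f_{k+1}]\le[h]$ together with $[f_k]<[f_{k+1}]$ force $[h]>[f_k]$, and dually $[h]<[g_j]$, so $[h]$ is a genuine interpolant of $(A,B)$, contradicting that it is a gap.

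To build $h$ I would first fix an increasing sequence $0=n_0<n_1<n_2<\cdots$ of natural numbers such that, for every $k$ and every $n\ge n_k$: (a) $f_i(n)\le f_k(n)$ for all $i<k$; (b) $f_k(n)\le g_k(n)$; and (c) $g_k(n)\le g_i(n)$ for all $i<k$. Such a sequence exists: at stage $k$, by transitivity of $\lefnt$ combined with the hypotheses that $A$ is $\lefnt$-increasing, $B$ is $\lefnt$-decreasing, and $[f_k]<[g_k]$, each of the finitely many inequalities $f_i\lefnt f_k$ (for $i<k$), $f_k\lefnt g_k$, and $g_k\lefnt g_i$ (for $i<k$) holds modulo finite; hence all of them hold simultaneously for all sufficiently large $n$, and one takes $n_k$ past that common threshold and past $n_{k-1}$.

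Then I would set $h(n)=g_k(n)$ whenever $n\in[n_k,n_{k+1})$, which defines a function $h\in\irr$. For the verification, fix $i$ and let $n\ge n_i$, say $n\in[n_k,n_{k+1})$ with $k\ge i$; then $h(n)=g_k(n)\ge f_k(n)\ge f_i(n)$ by (b) and then (a) (the last step trivial if $i=k$), so $f_i\lefnt h$. Dually, fix $j$ and let $n\ge n_j$, say $n\in[n_k,n_{k+1})$ with $k\ge j$; then $h(n)=g_k(n)\le g_j(n)$ by (c), so $h\lefnt g_j$. This $h$ is the desired interpolant.

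The whole argument is elementary and I do not foresee any real obstacle. The only point that calls for a little care is the bookkeeping in selecting the $n_k$: one must check that every mod-finite comparison invoked at stage $k$ genuinely follows — by transitivity of $\lefnt$ — from the pre-gap data, so that a single common threshold $n_k$ simultaneously secures conditions (a), (b) and (c).
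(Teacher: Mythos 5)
Your proof is correct. The paper itself gives no proof of this statement---it is quoted as a classical theorem of Hadamard with only a citation---so there is nothing in the text to compare against; your interleaving argument (choosing thresholds $n_k$ beyond which the finitely many relevant mod-finite inequalities among $f_0,\dots,f_k,g_0,\dots,g_k$ hold pointwise, then splicing $h(n)=g_k(n)$ on $[n_k,n_{k+1})$) is the standard elementary proof, and your observation that a weak interpolant $f_k\lefnt h\lefnt g_j$ for all $k,j$ is automatically a strict interpolant, because both sides are infinite and strictly monotone in $\irrfin$, is exactly the right way to handle the strictness demanded by the definition of a gap. One cosmetic slip: you fix $n_0=0$ while also requiring condition (b) at $k=0$ for all $n\ge n_0$, which need not be achievable since $f_0\lefnt g_0$ holds only modulo finite; either choose $n_0$ past the threshold for $f_0\le g_0$ exactly as you do at later stages (defining $h$ arbitrarily below $n_0$), or note that the finitely many exceptional $n<n_1$ are irrelevant because all conclusions are mod-finite. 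This does not affect the validity of the argument.
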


Notice that this is in contrast to the rationals which evidently do have $\omega$-limits and $(\omega,\omega^*)$-gaps.

Gaps in $\irrfin$ are related to the cardinal $\b$. For example, as in the next Lemma and
Rothberger's Theorem~\ref{thm:rothberger-first} below.

\begin{lem}
  \label{lem:b-gap}
  $\b$ is the smallest ordinal $\kappa$ for which there exists a $(\kappa,0)$-gap in\linebreak
  $\irrfin$. 
\end{lem}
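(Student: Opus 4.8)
The plan is to unpack the definition of a $(\kappa,0)$-gap: since the second component of such a pre-gap is empty, a $(\kappa,0)$-pre-gap in $\irrfin$ is nothing more than a strictly $\lefnt$-increasing sequence $\<[f_\xi]:\xi<\kappa\>$ of equivalence classes, and I would first record that such a pre-gap is a $(\kappa,0)$-gap exactly when the set $\{[f_\xi]:\xi<\kappa\}$ has no $\lefnt$-upper bound. Indeed, if $h\in\irr$ satisfies $f_\xi\lefnt h$ for all $\xi$, then the pointwise successor $h+1$ satisfies $f_\xi\lefnt h\lfnt h+1$ for all $\xi$ and thus interpolates the pre-gap; conversely, any interpolant of the pre-gap is in particular an $\lefnt$-upper bound of the $f_\xi$'s. (In particular $(\kappa,0)$-gaps only occur for limit $\kappa$, since a successor ordinal has a top element; recall $\b\ge\aleph_1$ is a limit ordinal.) So the lemma reduces to the assertion that $\b$ is the least ordinal $\kappa$ admitting an unbounded, strictly $\lefnt$-increasing $\kappa$-sequence in $\irr$.

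For the inequality $\kappa\ge\b$ whenever a $(\kappa,0)$-gap exists: the underlying set $\{f_\xi:\xi<\kappa\}$ of such a gap is an unbounded subset of $\irr$ of cardinality $|\kappa|$, so $\b\le|\kappa|\le\kappa$ directly from the definition of $\b$.

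For the converse I would construct a $(\b,0)$-gap explicitly. Fix an unbounded family $\{f_\alpha:\alpha<\b\}\subseteq\irr$ of cardinality $\b$, and define $g_\alpha\in\irr$ by recursion on $\alpha<\b$: since $|\alpha|<\b$, the family $\{f_\beta:\beta\le\alpha\}\cup\{g_\beta:\beta<\alpha\}$ has cardinality less than $\b$, hence is bounded, so it admits an $\lefnt$-upper bound $h_\alpha$, and I set $g_\alpha$ to be the pointwise successor $h_\alpha+1$. Then $g_\beta\lefnt h_\alpha\lfnt g_\alpha$ for every $\beta<\alpha$, so $\<[g_\alpha]:\alpha<\b\>$ is strictly $\lefnt$-increasing and hence has order-type exactly $\b$; and it is unbounded, because any $\lefnt$-upper bound of all the $g_\alpha$ would, since $f_\alpha\lefnt g_\alpha$, also bound every $f_\alpha$, contradicting the choice of the family. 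By the first paragraph this sequence is a $(\b,0)$-gap in $\irrfin$, and combined with the previous paragraph this shows $\b$ is the smallest ordinal $\kappa$ for which a $(\kappa,0)$-gap exists.

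No step here presents a real obstacle; the lemma is classical. The only points needing (routine) care are the translation in the first paragraph — verifying that non-interpolability of a $(\kappa,0)$-pre-gap is the same as unboundedness, handling the strict-versus-non-strict comparison via the pointwise successor trick — and the standard recursive construction of an increasing unbounded $\b$-sequence out of an arbitrary unbounded family of size $\b$.
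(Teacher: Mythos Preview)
Your proof is correct and follows essentially the same approach as the paper: the paper's one-line proof just sketches the recursive construction of a well-ordered dominating sequence $g_\xi\gefnt f_\xi$ for $\kappa\le\b$, which is exactly your third paragraph, while the lower bound and the unpacking of the gap definition are left implicit there. Your version simply spells out the details the paper omits.
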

\begin{proof}
  Given $f_\xi\in\irr$ ($\xi<\kappa$) where $\kappa\le\b$, one can recursively choose $g_\xi\gefnt f_\xi$ that
  are well-ordered, i.e.~$g_\xi\lneqfnt g_\eta$ for all $\xi<\eta$. 
\end{proof}

One of the most striking early discoveries in set theory was Hausdorff's
construction~\cite{hausdorff1909graduierung} of a special combinatorial object of cardinality exactly $\aleph_1$.

\begin{thm}[Hausdorff, 1909]
  There exist gaps of type $(\omega_1,\omega_1^*)$ in $\irrfin$.
\end{thm}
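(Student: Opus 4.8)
I would use the classical Hausdorff transfinite recursion. The plan is to build pairs $(f_\alpha,g_\alpha)$ of elements of $\irr$, by recursion on $\alpha<\oone$, maintaining the following invariants for all $\alpha<\beta<\oone$: $f_\alpha\lfnt f_\beta$ and $g_\beta\lfnt g_\alpha$ (so that $\langle[f_\alpha]\rangle_{\alpha<\oone}$ is strictly $\lefnt$-increasing and $\langle[g_\alpha]\rangle_{\alpha<\oone}$ is strictly $\lefnt$-decreasing in $\irrfin$); $f_\alpha(n)<g_\beta(n)$ for all but finitely many $n$ (so every $[f_\alpha]$ lies strictly below every $[g_\beta]$); and the \emph{Hausdorff condition}, that for every $n\in\N$ the set $\{\gamma<\beta:f_\gamma(k)\le g_\beta(k)\text{ for all }k\ge n\}$ is finite. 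The first two invariants already make $\bigl(\{[f_\alpha]:\alpha<\oone\},\{[g_\alpha]:\alpha<\oone\}\bigr)$ a pre-gap of type $(\omega_1,\omega_1^*)$ in $\irrfin$, so the entire content of the proof is to carry the recursion while preserving the Hausdorff condition and then to observe that the Hausdorff condition forbids interpolation.

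Forbidding an interpolant is the easy half. Suppose toward a contradiction that $c\in\irr$ satisfied $f_\alpha\lefnt c\lefnt g_\beta$ for all $\alpha,\beta<\oone$. For each $\alpha$ let $m(\alpha)$ be least with $f_\alpha(k)\le c(k)$ for all $k\ge m(\alpha)$. Since $\oone=\bigcup_{n\in\N}\{\alpha:m(\alpha)=n\}$, fix $n_0$ with $A=\{\alpha<\oone:m(\alpha)=n_0\}$ uncountable. For any $\beta<\oone$, choose $N\ge n_0$ with $c(k)\le g_\beta(k)$ for all $k\ge N$; then every $\alpha\in A\cap\beta$ satisfies $f_\alpha(k)\le c(k)\le g_\beta(k)$ for $k\ge N$, so $A\cap\beta$ is contained in the finite set furnished by the Hausdorff condition for this $\beta$ and $n=N$. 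As this holds for every $\beta<\oone$, taking $\beta$ above the first $\omega$ elements of $A$ makes $A\cap\beta$ infinite, a contradiction.

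The recursion is trivial at $0$ and routine at successors: given $(f_\gamma,g_\gamma)$, choose $f_{\gamma+1},g_{\gamma+1}$ with $[f_\gamma]<[f_{\gamma+1}]<[g_{\gamma+1}]<[g_\gamma]$ in $\irrfin$, which is possible since $[f_\gamma]<[g_\gamma]$ and intervals in $\irrfin$ between distinct comparable elements are nonempty; the Hausdorff condition at $\gamma+1$ follows from the one at $\gamma$, because $g_{\gamma+1}\lefnt g_\gamma$ forces, for $n$ past the point where $g_{\gamma+1}(k)\le g_\gamma(k)$, the set for $\gamma+1$ to lie inside the set for $\gamma$ together with $\{\gamma\}$ (and smaller $n$ only shrinks it). \emph{The limit step is the crux, and I expect it to be the only genuinely nontrivial point.} At a limit $\delta<\oone$ --- necessarily of cofinality $\omega$ --- I would fix an increasing cofinal sequence $\langle\delta_i:i<\omega\rangle$ in $\delta$ and choose the representatives $f_{\delta_i},g_{\delta_i}$ along it so that $\langle f_{\delta_i}\rangle$ is pointwise nondecreasing and $\langle g_{\delta_i}\rangle$ pointwise nonincreasing. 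Finding a function $f_\delta$ that $\lefnt$-dominates every $f_{\delta_i}$ yet stays $\lefnt$-below every $g_{\delta_i}$, and then a $g_\delta$ with $f_\delta\lfnt g_\delta\lfnt g_{\delta_i}$ for all $i$, is an elementary diagonal construction and automatically gives the first two invariants at $\delta$, since each $\gamma<\delta$ lies below some $\delta_i$. The real work is to make these choices --- by a block-by-block ``fusion'' that interleaves the cofinal sequence with an enumeration of $\{\gamma<\delta\}$ --- so that the Hausdorff condition holds \emph{at} $\delta$: for each $n$ only finitely many $\gamma<\delta$ may remain $\le g_\delta$ on the tail $[n,\infty)$. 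In effect, the largest coordinate at which $f_\gamma$ is permitted to exceed $g_\delta$ must be driven to infinity as $\gamma$ runs through the enumeration of $\{\gamma<\delta\}$, and there is room to do so precisely because the Hausdorff conditions already obtained at the cofinal stages $\delta_i$ bound how many $\gamma$ can cluster below any fixed level. Getting this bookkeeping to close cleanly --- possibly at the cost of carrying a little extra ``slack'' in the invariants --- is the one delicate ingredient, but the scheme is the standard Hausdorff one and goes through.
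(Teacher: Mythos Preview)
The paper does not prove this theorem; it is stated as historical background with a citation to Hausdorff's original 1909 paper and no argument is given, so there is nothing to compare your approach against. Your sketch is the classical Hausdorff construction: the Hausdorff condition you isolate is the standard one, your no-interpolant argument is correct, and you rightly flag the limit step as the only genuinely delicate point---the fusion bookkeeping you describe (interleaving a cofinal $\omega$-sequence with an enumeration of $\delta$ so that each $\gamma<\delta$ is eventually forced to overshoot $g_\delta$ at some coordinate) is exactly the classical maneuver, and it does go through once written out carefully.
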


Later in 1936 he introduced $(\omega_1,\omega_1^*)$-gaps in $\pnfin$, in~\cite{hausdorff1936summen}.
Rothberger built upon Hausdorff's investigations,
for example in~\cite{rothberger_sur_1941}, which eventually lead
to his questions about refining filter-bases to towers.

\begin{thm}[Rothberger, 1941]
  \label{thm:rothberger-first}
  $\b$ is the smallest ordinal $\kappa$ for which there exists an $(\kappa,\omega^*)$-gap in
  $\pnfin$, and similarly for\/~$\irrfin$.
  \footnote{Owing to the
    similarities between these two partial orders, the existence of a gap of type $(\psi,\tau)$ in
    $\irrfin$ is equivalent to the existence of a $(\psi,\tau)$-gap in $\pnfin$.
    This was established by Rothberger in~\cite{rothberger_sur_1941}}.
\end{thm}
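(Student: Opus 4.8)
The plan is to establish the two inequalities separately: (I) for every ordinal $\kappa<\b$ there is no $(\kappa,\omega^*)$-gap in $\pnfin$, nor in $\irrfin$; and (II) there is a $(\b,\omega^*)$-gap in each of $\pnfin$ and $\irrfin$. The degenerate cases $\kappa\le 1$ in (I) are classical (a $\subseteqfnt$-decreasing $\omega$-chain of infinite sets always has an infinite pseudo-intersection, and one can be taken properly above any fixed smaller set), so in (I) I will assume $2\le\kappa<\b$.

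For (I) in $\pnfin$, take a $(\kappa,\omega^*)$-pre-gap $\bigl((a_\xi)_{\xi<\kappa},(b_n)_{n<\omega}\bigr)$; replacing $b_n$ by $\bigcap_{k\le n}b_k$ I may assume $b_0\supseteq b_1\supseteq\cdots$. For each $\xi$ set $f_\xi(n)=1+\max(a_\xi\setminus b_n)$, so that $a_\xi\cap[f_\xi(n),\infty)\subseteq b_n$; since $|\kappa|<\b$ and $\b$ is the least size of a $\lefnt$-unbounded family, there is a strictly increasing $f\in\irr$ with $f_\xi\lefnt f$ for all $\xi$. Then
\[
  c=\bigcup_{n=0}^{\infty}\bigl(b_n\cap[f(n),f(n+1))\bigr)
\]
is an interpolant: $a_\xi\subseteqfnt c$ because $a_\xi\cap[f(n),f(n+1))\subseteq b_n$ once $f(n)\ge f_\xi(n)$; $c\subseteqfnt b_m$ because $\bigcup_{n\ge m}b_n=b_m$; and it is proper, since $a_\xi\subsetfnt a_{\xi+1}\subseteqfnt c$ while $b_m\setminus c$ is infinite (it contains cofinally many points of the infinite set $b_m\setminus b_{m+1}$). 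This contradicts the gap property; the construction is exactly the diagonal splice already used in the proofs of Theorem~\ref{thm:d} and Lemma~\ref{lem:refine-to-tower}. The case of $\irrfin$ is parallel: normalise to $g_0\ge g_1\ge\cdots$ pointwise, let $m_\xi(n)$ be least with $f_\xi(k)\le g_n(k)$ for all $k\ge m_\xi(n)$, dominate $\{m_\xi:\xi<\kappa\}$ by a strictly increasing $m$, and check that $h(k)=\min\{g_n(k):m(n)\le k\}$ is an interpolant.

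For (II) I first fix, by recursion of length $\b$ (legitimate because $\b$ is regular, so every proper initial segment has size $<\b$ and is thus $\lefnt$-bounded), a strictly $\lefnt$-increasing, $\lefnt$-unbounded sequence $(f_\xi)_{\xi<\b}$ in $\irr$, as in Lemma~\ref{lem:b-gap}. Identifying $\N$ with $\N\times\N$, put $a_\xi=\{(i,j):j<f_\xi(i)\}$ (the region under the graph of $f_\xi$) and $b_n=\{(i,j):i\ge n\}$: then $f_\xi\lefnt f_\eta\iff a_\xi\subseteqfnt a_\eta$, each $a_\xi\subseteqfnt b_n$ (its first $n$ columns being finite), and $(b_n)$ is strictly $\supseteqfnt$-decreasing, so this is a $(\b,\omega^*)$-pre-gap. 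Any interpolant $c$ satisfies $c\subseteqfnt b_n$ for all $n$, hence has all columns finite, so $g(i)=1+\max\{j:(i,j)\in c\}$ obeys $a_\xi\subseteqfnt c\subseteq\{(i,j):j<g(i)\}$, giving $f_\xi\lefnt g$ for every $\xi$ — impossible. Thus $\pnfin$ has a $(\b,\omega^*)$-gap. For $\irrfin$ one may simply invoke Rothberger's transfer between the two orders (the footnote to the theorem, \cite{rothberger_sur_1941}), or construct a gap directly on $\N\cong\N\times\N$: with $h_\xi(i,j)=j$ for $1\le j<f_\xi(i)$ and $0$ otherwise, and $g_n(i,j)=j$ for $i\ge n$ and $0$ otherwise, one verifies $h_\xi\lfnt g_n$, that $(h_\xi)$ is strictly $\lefnt$-increasing of length $\b$ and $(g_n)$ strictly $\lefnt$-decreasing, and that any $h$ with $h_\xi\lefnt h\lefnt g_n$ for all $\xi,n$ must satisfy $h(i,j)=j$ for all $j<f_\xi(i)$, for almost all $i$, whence $f_\xi\lefnt\bigl(i\mapsto 1+\sup\{j:h(i,j)\ne 0\}\bigr)$ for every $\xi$, contradicting unboundedness.

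The main obstacle is the existence half for $\irrfin$: the increasing side of a $(\kappa,\omega^*)$-gap there is automatically $\lefnt$-bounded (by the largest member of the $\omega^*$-side), so an unbounded function tower cannot be transplanted verbatim as it is in $\pnfin$; one must either go through the transfer lemma or use the doubled-index device above to produce a bounded increasing $\b$-chain that is nevertheless unbounded below the chosen decreasing $\omega$-sequence. Everything else reduces to routine $\lefnt$-domination bookkeeping.
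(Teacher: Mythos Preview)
The paper does not prove this theorem at all: it is stated as Rothberger's 1941 result and cited to \cite{rothberger_sur_1941}, with no argument given. So there is no ``paper's own proof'' to compare against; what you have supplied is a self-contained proof of a classical fact the paper merely quotes.

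Your argument is essentially the standard one and is correct in substance. The lower bound (I) is exactly the diagonal splice appearing in the proof of Theorem~\ref{thm:d} (your reference to Lemma~\ref{lem:refine-to-tower} is misplaced---that lemma is a recursion, not a splice). One small wrinkle: your properness claim $a_\xi\subsetfnt c$ via $a_\xi\subsetfnt a_{\xi+1}\subseteqfnt c$ presupposes $\xi+1<\kappa$, so for successor $\kappa$ you have not shown the top element is strictly below $c$; but successor $\kappa$ reduces to the $(1,\omega^*)$ case you already dismissed as classical, so this is harmless once noted. For (II), your $\pnfin$ construction is precisely the one used in the converse half of Theorem~\ref{thm:d}. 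In the direct $\irrfin$ construction, the assertion $h_\xi\lfnt g_n$ is not literally true in the paper's notation ($\lfnt$ means strictly less almost everywhere, and your $h_\xi$ agrees with $g_n$ on infinitely many coordinates); what holds and what you need is $h_\xi\lneqfnt g_n$, which suffices for a pre-gap in $(\irrfin,\lefnt)$. With that correction the verification goes through as you indicate.
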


If $\U$ is the generic ultrafilter of $\pnfin$ then $\U$ is an nonprincipal ultrafilter on~$\N$ in
the forcing extension $V[\U]$, as $\pnfin$ does not add new reals (i.e.~subsets of $\N$).\footnote{Compare
  with Lemma~\ref{lem:force-ultra-over} for $\power(\omega)\div\ideal_\omega\cong\pnfin$.}

Concerning gaps in $\irr\div\U$ for $\U$ a generic ultrafilter,
our understanding is that the following is the result on gaps in $\irr\div\U$ proved by
Malliaris---Shelah, and moreover that this was first proven for $\t$ instead of $\p$ in the process of proving
their equality. 

\begin{thm}[Malliaris--Shelah]
  \label{thm:shelah-gap}
  There are no $(\theta,\lambda^*)$-gaps for $\theta,\lambda<\p$, with at least one of them a
  limit,\footnote{To avoid the degenerate case of $(1,1)$-gaps.}
  in $\irr\div\U$ for $\U$ a generic ultrafilter.
\end{thm}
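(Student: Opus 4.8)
The plan is to prove the statement in $V$ as a property of the forcing $\pnfin$: below every condition that forces a pre-gap of the stated type, I will produce a stronger condition forcing that the pre-gap is interpolated in $\irr\div\U$, and since $\U$ is generic this gives the theorem. Since $\pnfin$ adds no new reals, $\irr^{V[\U]}=\irr^V$, and a routine reflection argument (using also that $\pnfin$ adds no new sequences of ordinals of length $<\p$) lets me assume the pre-gap is witnessed by a $<_{\U}$-increasing sequence $A=\langle f_\xi:\xi<\theta\rangle$ and a $<_{\U}$-decreasing sequence $B=\langle g_\eta:\eta<\lambda\rangle$ of members of $\irr\cap V$; fix $[a]\in\U$ forcing that $(A,B)$ is a $(\theta,\lambda^*)$ pre-gap. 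Here the forcing relation of $\pnfin$ is transparent, namely $[p]\Vdash\check X\in\dot\U$ iff $p\subseteqfnt X$, so $[a]$ forces the pre-gap relations precisely when $a$ is almost contained in each of $\{n:f_\xi(n)<f_{\xi'}(n)\}$ ($\xi<\xi'$), $\{n:g_{\eta'}(n)<g_\eta(n)\}$ ($\eta<\eta'$), and $\{n:f_\xi(n)<g_\eta(n)\}$ (all $\xi,\eta$). The goal becomes: find an infinite $b\subseteqfnt a$ and $h\in\irr$ with $b$ almost contained in $\{n:f_\xi(n)<h(n)<g_\eta(n)\}$ for every $\xi,\eta$, for then $[b]\le[a]$ forces that $[h]$ interpolates $(A,B)$.

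The one place the ``at least one a limit'' hypothesis enters is the sharpening that $[a]\Vdash g_\eta>_{\U}f_\xi+1$ for all $\xi,\eta$. Indeed, if $g_\eta=_{\U}f_\xi+1$ and $\xi$ were not the largest index of $\theta$, then any $f_{\xi'}$ with $\xi'>\xi$ would satisfy $f_{\xi'}\ge_{\U}f_\xi+1=_{\U}g_\eta$, contradicting $f_{\xi'}<_{\U}g_\eta$; likewise, if $\eta$ were not the largest index of $\lambda$ then $g_{\eta'}\le_{\U}f_\xi$ for $\eta'>\eta$, again a contradiction. Hence $g_\eta=_{\U}f_\xi+1$ forces both $\theta$ and $\lambda$ to be successor order types, which is excluded; as $g_\eta>_{\U}f_\xi$ regardless, $g_\eta>_{\U}f_\xi+1$. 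Unravelling the forcing relation once more, $a$ is almost contained in $\{n:g_\eta(n)>f_\xi(n)+1\}$ for every $\xi,\eta$.

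Now the combinatorial heart. For $\xi<\theta$ and $\eta<\lambda$ set
\[
  R_{\xi,\eta}=\{(n,m)\in a\times\N : f_\xi(n)<m<g_\eta(n)\},
\]
viewed inside $a\times\N\cong\N$. By the previous paragraph, for cofinitely many $n\in a$ the $n$-th column of $R_{\xi,\eta}$ is nonempty, and all its columns are finite (bounded by $g_\eta(n)$), so each $R_{\xi,\eta}$ is infinite; and the almost-inclusions fixed above give $R_{\xi',\eta'}\subseteqfnt R_{\xi,\eta}$ whenever $\xi\le\xi'$ and $\eta\le\eta'$, since outside finitely many finite columns the inclusion is exact. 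Thus $\{R_{\xi,\eta}:\xi<\theta,\eta<\lambda\}$ is a filter base, of cardinality $\max(|\theta|,|\lambda|)<\p$, so it has an infinite pseudo-intersection $P$. As $P\subseteqfnt R_{0,0}$ and the columns of $R_{0,0}$ are finite, $P$ meets infinitely many columns, so $b=\{n:(\{n\}\times\N)\cap P\ne\emptyset\}$ is infinite and $b\subseteqfnt a$. Put $h(n)=\min\{m:(n,m)\in P\}$ for $n\in b$ and $h(n)=0$ otherwise. Fixing $\xi,\eta$: since only finitely many points of $P$ lie outside $R_{\xi,\eta}$, for all but finitely many $n\in b$ every point of the $n$-th column of $P$, in particular $(n,h(n))$, lies in $R_{\xi,\eta}$, so $f_\xi(n)<h(n)<g_\eta(n)$. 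Hence $b$ is almost contained in $\{n:f_\xi(n)<h(n)\}$ and in $\{n:h(n)<g_\eta(n)\}$, so $[b]\le[a]$ forces $f_\xi<_{\U}h<_{\U}g_\eta$ for every $\xi,\eta$, i.e.\ forces that $[h]$ interpolates $(A,B)$, contradicting that $(A,B)$ is a gap.

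The main obstacle I anticipate is not the last computation but the bookkeeping of the first paragraph: pinning down in exactly what sense ``$(A,B)$ is a pre-gap in $\irr\div\U$'' reduces to mod-finite statements about a ground-model configuration on a single condition, and verifying the distributivity-type reflection for sequences of length up to $\p$. It is worth noting that the argument uses only the defining property of $\p$---that a filter base of size $<\p$ has a pseudo-intersection---together with the genericity of $\U$; it does not invoke $\p=\t$, so this particular theorem, unlike the universal-gap spectrum result of this section, is in fact elementary.
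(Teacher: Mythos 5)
Your proposal is correct and follows essentially the same route as the paper's sketch: reduce to a ground-model pre-gap below a condition via distributivity ($\p\le\h$), turn the regions between the graphs over that condition into a filter-base of size $<\p$, take a pseudo-intersection with finite columns, and read off an interpolating function on its domain. The only differences are presentational (strictly-between regions versus separate areas above/below, $\min$ versus $\max$ of columns), and you usefully spell out the step where the ``at least one a limit'' hypothesis guarantees the columns are nonempty, which the paper's sketch leaves implicit.
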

\begin{proof}[Sketch of Proof]
  Such a gap $(A,B)$ would be in the ground model as $|A|,|B|<\p\le\h$,
  where $\h$ is another cardinal characteristic of $\pnfin$, namely its distributivity number.
  Working with representatives $A,B\subseteq\Fin^+$,
  suppose $x\in\Fin^+$ forces that $(A,B)$ is a $(\theta,\lambda^*)$-pregap.
  Then the areas above the graphs $a\restriction x$ ($a\in A$) and the areas below the graphs
  $b\restriction x$ ($b\in B$) together form a filter-base, and hence has an infinite pseudo-intersection
  $y\subseteq\N\times\N$ with $d=\dom(y)\subseteq x$ and finite vertical sections.
  Thus $d$ is infinite and forces that $[n\mapsto \max\{k:(n,k)\in y\}]$ interpolates $(A,B)$ in $\irr\div\U$.
\end{proof}

There was no obvious application of our new result, that $\d^*\ge\b$, to gaps in $\irr\div\U$ for
$\U$ generic. Perhaps this is because the following question (open to our knowledge) has a positive
answer, in which case Theorem~\ref{thm:shelah-gap} is optimal.

\begin{question}
  Are there $(\p,\lambda^*)$-gaps \tu($\lambda\le\p$\tu) in $\irr\div\U$ for $\U$ generic?
\end{question}

\subsection{Universal Gaps}

While there are endless variations one could consider for the ultrapowers $\irr\div\U$, universality
properties are interesting in general: Let $\b_{\wh\U}$\footnote{This notation is to distinguish from
  $\b_\U$ which would naturally refer to unbounded families in $\irr\div\U$ for a specific $\U$. We
  do not consider $\b_\U$ here.}
  the smallest cardinality of a subfamily of
$A\subseteq\irr$ for which $A\div\U$ is unbounded in $\irr\div\U$ for every nonprincipal ultrafilter $\U$ on $\N$. 
The ultrafilter property entails that $\irr\div\U$ is totally ordered, and thus ``unbounded'' and
``cofinal'' are equivalent properties. Evidently:

\begin{lem}
  $\b\le\b_{\wh\U}=\d_{\wh\U}\le\d$.
\end{lem}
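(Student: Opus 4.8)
The plan is to derive all three comparisons from two elementary facts: that $\irr\div\U$ is a linear order with no largest element for every nonprincipal ultrafilter $\U$, and that every nonprincipal ultrafilter on $\N$ contains the cofinite filter. For the equality $\b_{\wh\U}=\d_{\wh\U}$, I would first note that $[f]<[f+1]$ in $\irr\div\U$ for every $f$ and every $\U$, so $\irr\div\U$ has no top element. Then in any linear order $L$ with no maximum a subset $A$ is unbounded if and only if it is cofinal: if $A$ were cofinal but bounded by $u$, pick $v>u$ (possible since there is no maximum) and then $a\in A$ with $a\ge v>u$, a contradiction; conversely if $A$ is not cofinal there is $x$ with $a<x$ for all $a\in A$, so $x$ bounds $A$. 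Applying this to $L=\irr\div\U$ for each nonprincipal $\U$ shows that a family $A\subseteq\irr$ is ``unbounded for all $\U$'' exactly when it is ``cofinal for all $\U$'', so the two cardinals coincide.

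For $\b\le\b_{\wh\U}$, I would take any $A\subseteq\irr$ with $|A|<\b$ and a $\lefnt$-bound $g\in\irr$ for $A$. For each $f\in A$ the set $\{n:f(n)\le g(n)\}$ is cofinite, hence belongs to every nonprincipal $\U$, so $[f]\le[g]$ in $\irr\div\U$; thus $A\div\U$ is bounded in $\irr\div\U$ for every nonprincipal $\U$, and $A$ does not witness $\b_{\wh\U}$. Symmetrically, for $\d_{\wh\U}\le\d$ I would take a dominating family $D\subseteq\irr$ of size $\d$: for any $f\in\irr$ there is $d\in D$ with $f\lefnt d$, and again $\{n:f(n)\le d(n)\}\in\U$ yields $[f]\le[d]$, so $D\div\U$ is cofinal in $\irr\div\U$ for every nonprincipal $\U$ and $\d_{\wh\U}\le|D|=\d$.

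There is no real obstacle; the only step needing a moment's attention is the equivalence of unboundedness and cofinality in $\irr\div\U$, which rests precisely on the ultrafilter property (total order) and the absence of a maximum. The remaining content is just the standard observation that $\lefnt$-domination is preserved when passing to the quotient by any nonprincipal ultrafilter, since such ultrafilters extend the cofinite filter.
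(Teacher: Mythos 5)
Your proof is correct and follows exactly the route the paper intends: the paper dispenses with the lemma as ``evident'' after noting that the ultrafilter property makes $\irr\div\U$ totally ordered (hence unbounded $=$ cofinal, which you verify carefully using the absence of a maximum), and the two outer inequalities are, as you argue, just the transfer of $\lefnt$-bounds and $\lefnt$-domination to each ultrapower because nonprincipal ultrafilters contain the cofinite sets. No gaps; your write-up simply makes explicit what the paper leaves unstated.
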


\begin{definition}
  A pair $(A,B)$ of subfamilies of $\irr$ is called a \emph{universal \tu(pre-\tu)gap for ultrapowers} if
  $(A\div\U, B\div\U)$ is a (pre-)gap in $\irr\div\U$ for every nonprincipal ultrafilter $\U$ on $\N$.

  $f\len\U g$ for every nonprincipal ultrafilter $\U$ on $\N$ is equivalent to
  $f\lefnt g$, and $f\lneqn\U g$ for all such $\U$ is equivalent to $f\lfnt g$.
  Hence a universal pregap satisfies $f\lfnt g$ for all $f\in A$ and $g\in B$, and its
  order-types are with respect to the $\lfnt$ ordering on $\irr$.
\end{definition}

Since an order-type is a partial ordering modulo order isomorphism, the following is just a
reformulation of the definitions.

\begin{prop}
  $\b_{\wh\U}$ is the smallest cardinality of a order-type $\tau$ for which there exists a universal
  $(\tau,0)$-gap in ultrapowers of $\irr$.
\end{prop}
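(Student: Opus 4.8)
The plan is to show that both sides of the asserted equality unwind, purely via the definitions, to the same explicit quantity: the least cardinality of a family $A\subseteq\irr$ whose image $A\div\U$ is unbounded in $\irr\div\U$ for every nonprincipal ultrafilter $\U$ on $\N$, which is $\b_{\wh\U}$ by definition.

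First I would record two elementary facts about the target structures. For a nonprincipal ultrafilter $\U$ on $\N$ the ultrafilter property makes $\irr\div\U$ a \emph{linearly} ordered set, and it has no largest element since $[f+1]>[f]$ for every $[f]$. In a linear order with no top element, a subset $C$ has a strict upper bound if and only if it has any upper bound, i.e.\ if and only if $C$ fails to be cofinal; equivalently, ``$C$ is cofinal'' is the same as ``no element lies strictly above every member of $C$''.

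Next I would translate the gap condition in the case $B=\emptyset$, the only case relevant here. Then $(A\div\U,\emptyset)$ is vacuously a pregap in $\irr\div\U$, and the non-interpolation requirement degenerates to: there is no $c\in\irr\div\U$ with $[a]<c$ for all $a\in A$. By the observation above this says exactly that $A\div\U$ is unbounded (equivalently cofinal) in $\irr\div\U$. Consequently, writing $\tau=\otp(A,\lfnt)$ --- the order type stipulated for a universal pregap --- the pair $(A,\emptyset)$ is a universal $(\tau,0)$-gap for ultrapowers precisely when $A\div\U$ is unbounded in $\irr\div\U$ for every nonprincipal ultrafilter $\U$ on $\N$.

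Finally I would read off cardinalities: an order type has the same cardinality as its underlying set, so $|\tau|=|A|$, and hence the least cardinality of an order type $\tau$ admitting a universal $(\tau,0)$-gap equals the least $|A|$ over families $A\subseteq\irr$ with $A\div\U$ unbounded in $\irr\div\U$ for all nonprincipal $\U$ --- that is, $\b_{\wh\U}$. The only step that needs a little care, and where I would be most explicit, is the order-type bookkeeping hidden in the ``$(\tau,0)$'' label: one must use that $\tau$ refers to $(A,\lfnt)$ rather than to the possibly $\U$-dependent order type of $A\div\U$, so that $|\tau|=|A|$ holds with no extra hypotheses; granting that, the statement is indeed just a reformulation of the definitions, as claimed.
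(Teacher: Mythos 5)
Your proposal is correct and matches the paper's treatment: the paper gives no separate argument, stating only that the proposition ``is just a reformulation of the definitions,'' and your unwinding (with $B=\emptyset$ the non-interpolation clause for $(A\div\U,\emptyset)$ says exactly that $A\div\U$ has no strict upper bound, i.e.\ is unbounded/cofinal in the linearly ordered $\irr\div\U$, and $|\tau|=|A|$) is precisely that reformulation made explicit.
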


\begin{lem}
  If $\b=\b_{\wh\U}$ then there exists a universal $(\b,0)$-gap for ultrapowers of\/~$\irr$.
\end{lem}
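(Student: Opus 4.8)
The plan is to produce the gap directly from the definition of $\b_{\wh\U}$. Fix a family $A=\{f_\xi:\xi<\b\}$ of cardinality $\b_{\wh\U}=\b$ such that $A\div\U$ is unbounded in $\irr\div\U$ for every nonprincipal ultrafilter $\U$ on $\N$. Since the ``upper half'' of a $(\b,0)$-gap is empty, all that is needed is a $\lfnt$-strictly increasing sequence $(g_\xi:\xi<\b)$ in $\irr$ whose image remains unbounded in $\irr\div\U$ for every nonprincipal $\U$; then $A'=\{g_\xi:\xi<\b\}$ has order-type exactly $\b$ with respect to $\lfnt$, and $(A',\emptyset)$ is a universal $(\b,0)$-gap.

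First I would build $(g_\xi:\xi<\b)$ by recursion. At stage $\xi<\b$ the family $\{g_\eta:\eta<\xi\}\cup\{f_\xi\}$ has cardinality $<\b$, because $\b$ is a cardinal and hence $|\xi|<\b$; so by the defining property of $\b$ it has a $\lefnt$-upper bound $g'\in\irr$, and I set $g_\xi(n)=g'(n)+1$. Then $g_\eta\lfnt g_\xi$ for every $\eta<\xi$ and $f_\xi\lefnt g'\lfnt g_\xi$. Consequently $\xi\mapsto g_\xi$ is a strictly $\lfnt$-increasing map of $\b$ into $\irr$, so $A'$ is well-ordered by $\lfnt$ with order-type $\b$, and moreover $f_\xi\lefnt g_\xi$ for every $\xi<\b$.

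Finally I would verify universality. Let $\U$ be an arbitrary nonprincipal ultrafilter on $\N$ and suppose toward a contradiction that $A'\div\U$ were bounded, say by $h\div\U$. Since $\lefnt$ refines $\len\U$, from $f_\xi\lefnt g_\xi$ we obtain $f_\xi\len\U g_\xi\len\U h$ for all $\xi<\b$, whence $h\div\U$ bounds $A\div\U$ --- contradicting the choice of $A$. Thus $A'\div\U$ is unbounded for every nonprincipal $\U$, i.e.~$(A'\div\U,\emptyset)$ cannot be interpolated in any $\irr\div\U$, as required.

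There is essentially no obstacle: the argument is a one-pass transfinite recursion, and the only point deserving care is that $\b$ is a cardinal, so that every proper initial segment of the recursion is a family of size $<\b$ and hence $\lefnt$-bounded --- in particular no appeal to the regularity of $\b$ is needed. The mild subtlety worth flagging is the automatic transfer of unboundedness: $\lefnt$ simultaneously refines all the orderings $\len\U$, which is exactly why dominating $A$ by the $\lfnt$-chain $A'$ preserves \emph{universal} unboundedness rather than merely unboundedness for some particular $\U$.
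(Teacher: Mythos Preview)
Your proof is correct and is exactly the argument the paper has in mind: take a universally unbounded family of size $\b=\b_{\wh\U}$ and dominate it by a $\lfnt$-well-ordered family via the standard recursion (this is the construction already recorded in the proof of Lemma~\ref{lem:b-gap}). The paper compresses this to a single sentence, but the content is identical.
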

\begin{proof}
  A universally unbounded subfamily of $\irr$ can be dominated by a well-ordered subfamily of $\irr$
  when its cardinality is only $\b$.
\end{proof}

From the folklore, a \emph{scale in $(\irr,\lefnt)$} is a well-ordered cofinal subfamily. If $\b=\d$
then there exists a scale in $\irr$.  

\begin{lem}
  If there is a scale in $\irr$ of order-type $\kappa$ then 
    there exists a universal $(\kappa,0)$-gap for ultrapowers of\/ $\irr$. 
\end{lem}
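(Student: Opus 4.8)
The plan is to let the scale itself be the left half of the gap and the empty family the right half, and to check the gap condition uniformly over all nonprincipal ultrafilters. Fix a scale $\langle f_\xi:\xi<\kappa\rangle$ in $(\irr,\lefnt)$; since it is well-ordered of order type $\kappa$ in $\irr\div\Fin$ it is strictly $\lfnt$-increasing, $f_\xi\lfnt f_\eta$ for all $\xi<\eta$, and being cofinal it satisfies: for every $g\in\irr$ there is $\xi<\kappa$ with $g\lefnt f_\xi$. Put $A=\{f_\xi:\xi<\kappa\}$, so $(A,\emptyset)$ is a pregap with $\lfnt$-order-types $(\kappa,0)$.

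First I would verify that for each nonprincipal ultrafilter $\U$ on $\N$, $A\div\U$ has order type $\kappa$ in the totally ordered set $\irr\div\U$. By the equivalence recorded in the definition of universal pregaps, $f_\xi\lfnt f_\eta$ yields $f_\xi\lneqn\U f_\eta$, i.e.\ $[f_\xi]_\U<[f_\eta]_\U$, for $\xi<\eta$; hence $\xi\mapsto[f_\xi]_\U$ is a strictly increasing, in particular injective and order preserving, map of $\kappa$ onto $A\div\U$. Next I would verify that $A\div\U$ is cofinal in $\irr\div\U$: given $g\in\irr$, cofinality of the scale gives $\xi<\kappa$ with $g\lefnt f_\xi$, so $\{n:g(n)\le f_\xi(n)\}$ is cofinite and therefore a member of the nonprincipal ultrafilter $\U$, whence $[g]_\U\le[f_\xi]_\U$.

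Since $A\div\U$ is cofinal, no $c\in\irr\div\U$ lies strictly above every member of $A\div\U$, so the pregap $(A\div\U,\emptyset)$ cannot be interpolated and is a $(\kappa,0)$-gap in $\irr\div\U$. As $\U$ was an arbitrary nonprincipal ultrafilter, $(A,\emptyset)$ is a universal $(\kappa,0)$-gap for ultrapowers of $\irr$. There is essentially no obstacle here; the only points needing care are the passage to a strictly $\lfnt$-increasing scale, so that the order type is genuinely $\kappa$ rather than smaller, and the use of the fact that every cofinite subset of $\N$ belongs to every nonprincipal ultrafilter, which is what transfers the $\lefnt$- and $\lfnt$-facts from $\irr\div\Fin$ into each $\irr\div\U$.
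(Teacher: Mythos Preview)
The paper states this lemma without proof, so there is nothing to compare against; your approach---taking the scale itself as the left side of the pregap and checking cofinality in every $\irr\div\U$---is exactly the intended one, and the cofinality argument is correct.

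One point deserves more care. You assert that ``since it is well-ordered of order type $\kappa$ in $\irr\div\Fin$ it is strictly $\lfnt$-increasing''. The paper defines a scale only as a well-ordered cofinal subfamily of $(\irr,\lefnt)$, which literally gives $f_\xi\lneqfnt f_\eta$ for $\xi<\eta$, not $f_\xi\lfnt f_\eta$; these differ, since one may have $f_\xi(n)=f_\eta(n)$ on an infinite set, and if that set lies in $\U$ then $[f_\xi]_\U=[f_\eta]_\U$ and the order type in $\irr\div\U$ collapses. Since the paper stipulates that the order types of a universal pregap are taken with respect to $\lfnt$, this step is not automatic. You do acknowledge the issue in your closing remark about ``the passage to a strictly $\lfnt$-increasing scale'', but the body of the argument treats it as already done. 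The fix is the standard one (cf.\ the proof of Lemma~\ref{lem:b-gap}): the existence of a scale forces $\kappa=\b=\d$, so one may recursively choose $g_\xi\gefnt f_\xi$ with $g_\xi\gfnt g_\eta$ for all $\eta<\xi$, obtaining a genuinely $\lfnt$-increasing cofinal sequence to which the rest of your argument applies verbatim.
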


Though we did not attempt to find a counterexample,
we would conjecture that universal $(\kappa,0)$-gaps need not exist.
In fact it is not clear that any universal gaps of well-ordered type exists at all.
To be noted, the following question could be asked about gaps in $\irr\div\U$ for $\U$ generic as well.

\begin{question}
  Is it consistent that there is no universal gap for ultrapowers of\/ $\irr$ of type $(\tau,\psi)$
  with both $\tau$ and $\psi$ well-ordered?
\end{question}

An order-type $\tau$ is said to be \emph{filtered} if every finite $F\subseteq\tau$ has an upper
bound in~$\tau$. 
The following theorem details the relevance of the cardinal $\d^*$ to universal gaps for ultrapowers
of $\irr$. 

\begin{thm}
  For\/~$\tau$ a filtered order-type of cardinality $|\tau|<\d^*$ and~$\lambda<\p$ a limit, there are no
  universal $(\tau,\lambda^*)$-gaps for ultrapowers of\/ $\irr$.
\end{thm}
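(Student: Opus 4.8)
The plan is to show that any pair $(A,B)$ realizing a universal pregap of type $(\tau,\lambda^*)$ admits an interpolant in $\irr\div\U$ for a suitably chosen nonprincipal ultrafilter $\U$; since a universal $(\tau,\lambda^*)$-gap would in particular be such a pair, this proves the theorem. Regard $A$ as an $\lfnt$-increasing family of order type $\tau$, so that $A$ is filtered and $|A|=|\tau|<\d^*$, and write $B=\{g_\eta:\eta<\lambda\}$ with $g_\eta\gfnt g_{\eta'}$ whenever $\eta<\eta'$. By the definition of a universal pregap we have $f\lfnt g_\eta$ for all $f\in A$ and $\eta<\lambda$.

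First I would isolate where the hypothesis that $\lambda$ is a limit enters. For $f\in A$ and $\eta<\lambda$, since $\eta+1<\lambda$ we have $f\lfnt g_{\eta+1}$ and $g_{\eta+1}\lfnt g_\eta$, so $g_\eta(i)>g_{\eta+1}(i)>f(i)$, and therefore $g_\eta(i)\ge f(i)+2$, for all but finitely many $i$. Writing $f^+$ for the function $i\mapsto f(i)+1$, this says $f^+\lfnt g_\eta$ for all $f\in A$ and $\eta<\lambda$. This small amount of extra room is exactly what will make the eventual interpolation strict.

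Then I would invoke $\d^*$. Identify $\N\times\N$ with $\N$ and set $b_f=\{(i,n):n>f(i)\}$ for $f\in A$, $c_\eta=\{(i,n):n<g_\eta(i)\}$ for $\eta<\lambda$, and $H=\{b_f:f\in A\}$. Each $b_f$ is infinite, $|H|=|\tau|<\d^*$, and $b_f\cap c_\eta=\{(i,n):f(i)<n<g_\eta(i)\}$ has nonempty $i$-th column precisely when $f^+(i)<g_\eta(i)$, hence for all but finitely many $i$; so each $c_\eta$ meets every member of $H$ in an infinite set. The $c_\eta$ are $\supseteqfnt$-decreasing infinite sets, so $\{c_\eta:\eta<\lambda\}$ is a filter-base of $H$-thick sets of cardinality $|\lambda|<\p$. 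Since $|H|<\d^*$, the definition of $\d^*$ furnishes an $H$-thick pseudo-intersection $y$: an infinite set with $y\subseteqfnt c_\eta$ for every $\eta$ and $y\cap b_f$ infinite for every $f$. Because $y\subseteqfnt c_0$ and $c_0$ has finite columns, $y$ has finite columns; define $c:\N\to\N$ by taking $c(i)$ to be the greatest $n$ with $(i,n)\in y$ (and $c(i)=0$ when the $i$-th column of $y$ is empty). From $y\subseteqfnt c_\eta$ we get $c(i)<g_\eta(i)$ for almost all $i$, i.e.~$c\lfnt g_\eta$ for every $\eta<\lambda$; from $y\cap b_f$ infinite we get that $D_f:=\{i:c(i)>f(i)\}$ is infinite for every $f\in A$.

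Finally I would assemble the ultrafilter using that $\tau$ is filtered: given $f_1,\dots,f_k\in A$, pick an upper bound $f^*\in A$; then $f^*\gefnt f_j$ for each $j$, so $D_{f^*}\subseteqfnt\bigcap_{j}D_{f_j}$, and $D_{f^*}$ is infinite, so $\{D_f:f\in A\}$ has the finite intersection property. Extend it, together with the cofinite filter on $\N$, to a nonprincipal ultrafilter $\U$. Then $f<_{\U}c$ for every $f\in A$ (because $D_f\in\U$) and $c<_{\U}g_\eta$ for every $\eta<\lambda$ (because $c\lfnt g_\eta$ and $\U$ is nonprincipal); since $\irr\div\U$ is totally ordered, $[c]$ interpolates the pregap $(A\div\U,B\div\U)$, so $(A,B)$ is not a universal gap. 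The step I expect to be the real obstacle is ensuring this interpolation is \emph{strict} on the $A$-side: the $\d^*$-machinery by itself only produces $c$ with $f(i)\le c(i)<g_\eta(i)$ on the relevant coordinates, which leaves open $c=_{\U}f$ for some $f\in A$ and the chosen $\U$; the upgrade from $f\lfnt g_\eta$ to $f^+\lfnt g_\eta$, which is where ``$\lambda$ a limit'' is genuinely needed, is what closes this gap while keeping the sets $c_\eta$ thick.
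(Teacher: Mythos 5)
Your proposal is correct and takes essentially the same route as the paper's own proof: both encode $A$ as the family $H$ of strictly-above areas and $B$ as the $\supsetfnt$-well-ordered filter-base of strictly-below areas, apply the definition of $\d^*$ (using $|\tau|<\d^*$ and $\lambda<\p$) to get a thick pseudo-intersection, define the interpolant by column maxima, and use filteredness of $\tau$ to generate a nonprincipal ultrafilter $\U$ from the sets where the interpolant strictly exceeds $f$, with the limit hypothesis on $\lambda$ used identically to make the strips between the graphs infinite. The only cosmetic difference is your convention $c(i)=0$ on empty columns versus the paper's restriction to the coordinates with nonempty finite vertical sections, which is harmless since $\U$ concentrates on coordinates where $c(i)>f(i)$.
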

\begin{proof}
  Suppose $A,B\subseteq\irr$ have order-types $\tau$ and $\lambda^*$ under $\lfnt$, respectively.
  For each $f\in A$, let $a_f=\{(n,k):k> f(n)\}$ be the area strictly above the graph of~$f$,
  and for each $g\in B$, let $b_g=\{(n,k):k< g(n)\}$ be the area strictly below the
  graph of $g$.

  For all $f\in A$ and $g\in B$, there is a $g'\lfnt g$ in $B$ since $\lambda$ is a limit,
  and $f\lfnt g'\lfnt g$ implies that $a_f\cap b_g$ is infinite; while the $b_g$
  ($g\in B$) are well-ordered under $\supsetfnt$ and thereby form a filter-base. 
  Therefore, $|A|<\d^*$ yields $c\subseteq\N\times\N$ with $c\subseteqfnt b_g$ for all
  $g\in B$ and $c\cap a_f$ infinite for all $f\in A$.

  All but finitely many elements of
  $\dom(c)$ have finite vertical sections, denoted as $x_c$.
  Letting $h\in\irr$ satisfy
  \begin{equation}
    h(n)=\max\{k:(n,k)\in c\}
  \end{equation}
  for all $n\in x_c$,   
  $h(n)< g(n)$ for all but finitely many $n\in x_c$ for all $g\in B$. 
  For every $f\in A$, 
  \begin{equation}
    y_f=\{n\in x_c:f(n)< h(n)\}
  \end{equation}
  is infinite. Every finite $\vgamma\subseteq A$ has a $\lfnt$-upper bound $f'\in A$. Thus
  $\bigcap_{f\in\vgamma}y_f\supseteqfnt y_{f'}$ is infinite, establishing that there exists a
  non-principal ultrafilter $\U\ni y_f$ for all~$f\in A$.  Now $f\lln\U h\lln\U g$ for all $f\in A$
  and $g\in B$.
\end{proof}

Applying Theorem~\ref{thm:main} yields the following Corollary.

\begin{cor}
  For all $\kappa<\b$ and limit $\lambda<\p$, there are no universal $(\kappa,\lambda^*)$-gaps for
  ultrapowers of\/~$\irr$. 
\end{cor}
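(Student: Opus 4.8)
The plan is to obtain this as a direct specialization of the preceding Theorem, using the main result $\b\le\d^*$ (Theorem~\ref{thm:main}) to verify its cardinality hypothesis. The preceding Theorem asserts that for a filtered order-type $\tau$ with $|\tau|<\d^*$ and a limit $\lambda<\p$ there is no universal $(\tau,\lambda^*)$-gap for ultrapowers of $\irr$; so it suffices to check that, when $\tau=\kappa$ is a cardinal below $\b$, both the ``filtered'' hypothesis and the cardinality hypothesis are met.

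First I would note that every ordinal, hence in particular every cardinal $\kappa$, is a filtered order-type: any finite subset of $\kappa$ has a maximum, which is the required upper bound. Second, since $\kappa<\b$ and $\b\le\d^*$ by Theorem~\ref{thm:main}, we have $|\kappa|=\kappa<\d^*$. Applying the preceding Theorem with $\tau=\kappa$ then yields, for every limit $\lambda<\p$, that there is no universal $(\kappa,\lambda^*)$-gap for ultrapowers of $\irr$, as desired.

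There is essentially no obstacle in this deduction; all of the substance resides in the preceding Theorem and in Theorem~\ref{thm:main}. The only point worth stating explicitly in the write-up is why the cardinality bound on $\tau$ in the Theorem can be replaced by the more natural hypothesis $\kappa<\b$, namely the inequality $\b\le\d^*$ supplied by the main theorem.
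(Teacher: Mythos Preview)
Your proposal is correct and matches the paper's approach exactly: the paper simply states that the Corollary follows by applying Theorem~\ref{thm:main} (i.e., $\b\le\d^*$) to the preceding Theorem, and your write-up spells out precisely this deduction, including the observation that an ordinal is a filtered order-type.
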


\printbibliography
\end{document}